\title[Ricci flow, Killing spinors, and T-duality]{Ricci flow, Killing spinors, and T-duality in generalized geometry}
\author[M. Garcia-Fernandez]{Mario Garcia-Fernandez}
\address{Dep. Matem\'aticas, Universidad Aut\'onoma de Madrid, and Instituto de Ciencias Matem\'aticas (CSIC-UAM-UC3M-UCM), Cantoblanco, 28049 Madrid, Spain}
\email{mario.garcia@icmat.es}
\thanks{This work is funded by the European Union's Horizon 2020 research and innovation programme under the Marie Sklodowska-Curie grant agreement No 655162.}
\theoremstyle{plain}
\newtheorem{theorem}{Theorem}[section]
\newtheorem{lemma}[theorem]{Lemma}
\newtheorem{corollary}[theorem]{Corollary}
\newtheorem{proposition}[theorem]{Proposition}
\theoremstyle{definition}
\newtheorem{definition}[theorem]{Definition}
\newtheorem{definition-theorem}[theorem]{Definition-Theorem}
\newtheorem{example}[theorem]{Example}
\theoremstyle{remark}
\newtheorem{remark}[theorem]{Remark}
\numberwithin{equation}{section} \setcounter{tocdepth}{1}
\newcommand{\la}{\langle} \newcommand{\ra}{\rangle}
\newcommand{\tr}{\operatorname{tr}}
\newcommand{\Id}{\operatorname{Id}}
\newcommand{\End}{\operatorname{End}}
\newcommand{\Ker}{\operatorname{Ker}}
\newcommand{\ad}{\operatorname{ad}}
\newcommand{\RR}{{\mathbb R}}
\renewcommand{\(}{\left(}
\renewcommand{\)}{\right)}
\newcommand{\surj}{\to\kern-1.8ex\to}
\newcommand{\cD}{\mathcal{D}}
\begin{document}

\maketitle

\begin{center}
{\small \emph{Dedicated to Nigel Hitchin on the occasion of his seventieth birthday.}}
\end{center}

\begin{abstract}%
We introduce a notion of Ricci flow in generalized geometry, extending a previous definition by Gualtieri on exact Courant algebroids. Special stationary points of the flow are given by solutions to first-order differential equations, the Killing spinor equations, which encompass special holonomy metrics with solutions of the Hull-Strominger system. Our main result investigates a method to produce new solutions of the Ricci flow and the Killing spinor equations. For this, we consider T-duality between possibly topologically distinct torus bundles endowed with 
Courant structures, and demonstrate that 
solutions of the equations are exchanged under this symmetry. As applications, we give a mathematical explanation of the \emph{dilaton shift} in string theory and prove that the Hull-Strominger system is preserved by T-duality.
\end{abstract}


\section{Introduction}

In generalized geometry, as initiated by Hitchin \cite{Hit1}, the condition of having zero torsion does not determine a metric connection uniquely. Consequently, curvature quantities are often difficult to tackle. In this work we study various curvature quantities in generalized geometry, along with natural partial differential equations for a generalized metric. Given a solution of the equations with a large isometry group, our prime motivation is an attempt to produce another solution in a manifold with possibly different topology. 

We begin by introducing a notion of Ricci flow in an arbitrary Courant algebroid $E$, which extends Gualtieri's definition of the generalized Ricci flow on exact Courant algebroids, studied in \cite{He,Streets}. One novelty of our definition is that the gauge-fixed versions of the flow are part of the geometric framework, via the family of divergence operators on the Courant algebroid (see Definition \ref{def:divergenceop}). When $E$ admits a spinor bundle, the gauge-fixing condition 
can be recast more intrinsically in terms of a twisted version of the Dirac generating operators introduced by Alekseev-Xu \cite{AXu}, and studied further by \v Severa \cite{Severa}.  

The relationship between the Ricci flow and the Killing spinor equations in generalized geometry \cite{grt}--for which we give a general definition-- is also studied, showing that the Killing spinors are special stationary points of the flow given by first-order differential equations (see Proposition \ref{prop:Ricciflat}). Solutions of the Killing spinor equations \eqref{eq:killing} are very interesting geometric objects, which encompass special holonomy metrics with solutions of the Hull-Strominger system of partial differential equations \cite{HullTurin,Strom}.

We next investigate a method to produce new solutions of the Ricci flow and the Killing spinor equations 
based on T-duality, a relation between quantum field theories discovered by physicists. The idea has its origins in the literature on string theory, namely the work of Buscher \cite{Buscher1}, which was developed further by Ro\v cek and Verlinde \cite{RoVer}. 
The method is to start with a given solution with abelian symmetries, reduce the space, and 
then produce another solution with dual symmetries. 
Our main result relies on an important observation by Cavalcanti and Gualtieri \cite{CaGu}, which states that T-duality can be viewed as an isomorphism between 
Courant structures on two possibly topologically distinct manifolds.

To state our main result, let us introduce some notation. Let $E$ be a Courant algebroid over a principal $T$-bundle $M$, with $T$ a torus, such that the $T$-action on $M$ lifts to $E$ preserving the Courant algebroid structure. We shall call $(E,M,T)$ an equivariant Courant algebroid. The simple reduction of $E$ by $T$, which we denote by
$$
E/T \to B,
$$
is a Courant algebroid over $B$, whose sheaf of sections is given by the invariant sections of $E$. Given now a pair of equivariant Courant algebroids $(E,M,T)$ and $(\hat E, \hat M, \hat T)$ over a common base
\begin{equation*}
  \xymatrix{
 M \ar[rd]_{p} &  & \hat{M} \ar[ld]^{\hat{p}} \\
  & B & \\
  }
\end{equation*}
we say they are dual if there exists an isomorphism of Courant algebroids between the simple reductions
$$
\psi \colon E/T \to \hat E/\hat T,
$$
which we call the duality isomorphism. We state now in an informal way our main result, and refer to Theorem \ref{th:duality} for the details.

\begin{theorem}\label{th:Tdualityintro}
Invariant solutions of the Ricci flow and the Killing spinor equations are exchanged under the duality isomorphism.
\end{theorem}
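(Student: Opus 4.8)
The plan is to factor the duality correspondence through the common reduced Courant algebroid and to reduce the statement to two facts: that the objects entering both sets of equations --- the generalized metric, the divergence operator, the generalized Ricci tensor and scalar curvature, and the (twisted) Dirac generating operators --- are (i) natural under the duality isomorphism $\psi$, being canonically built from the Courant algebroid structure together with the chosen metric, divergence and spinor bundle, and (ii) compatible with simple reduction of $T$-invariant data. Granting these, an invariant solution on $E$ descends to a solution on $E/T$, is carried verbatim by $\psi\colon E/T \to \hat E/\hat T$ to a solution on $\hat E/\hat T$, and lifts to an invariant solution on $\hat E$; this is the precise content of Theorem \ref{th:duality}.

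First I would set up the dictionary between invariant data on an equivariant Courant algebroid $(E,M,T)$ and data on its simple reduction $E/T \to B$. An invariant generalized metric $G$ restricts to a generalized metric $G_0$ on $E/T$, and an invariant divergence operator restricts to a divergence operator $\operatorname{div}_0$ (using that the anchor of $E/T$ is the restriction of $\rho$ and that invariant functions on $M$ descend to $B$). The key point is that the canonical torsion-free, $G$-compatible generalized connection with prescribed divergence is constructed equivariantly, so its restriction to invariant sections is the corresponding connection for $(G_0,\operatorname{div}_0)$ on $E/T$; hence the generalized Ricci tensor and scalar curvature of the invariant data agree with those of the reduced data. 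Since the Ricci flow and its gauge fixing are expressed through exactly these quantities, together with the Lie derivative along the flow (again intrinsic), a $T$-invariant family $(G_t,\operatorname{div}_t)$ solves the flow on $E$ if and only if $(G_{0,t},\operatorname{div}_{0,t})$ solves it on $E/T$. Transporting by $\psi$ and lifting along $\hat p$ then settles the Ricci flow part, and in particular the stationary first-order reductions of Proposition \ref{prop:Ricciflat}.

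For the Killing spinor equations \eqref{eq:killing} the same three moves --- descend, transport by $\psi$, lift --- are applied to the pair $(G,\varepsilon)$, but now the behaviour of spinor bundles under reduction intervenes. The spinor bundle $S(E)$ does not simply restrict to $S(E/T)$: instead, $T$-invariant sections of $S(E)$ should be identified with sections of $S(E/T)$ twisted by a line built from densities along the fibers of $p\colon M \to B$. I would establish this identification, check it is compatible with the $G$-induced splitting $S = S^+ \oplus S^-$ and with the twisted Dirac generating operators so that the first-order Killing spinor operators match under descent, and verify that it intertwines the spinor bundle isomorphism induced by $\psi$. An invariant Killing spinor on $E$ then descends, is carried by $\psi$, and lifts to an invariant Killing spinor on $\hat E$; along the way the spinor acquires the $T$-fiber density under descent and the reciprocal $\hat T$-fiber density under the lift to $\hat M$, so the dilaton encoded in $\hat\varepsilon$ differs from that of $\varepsilon$ by the logarithm of the ratio of fiber volumes. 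This is exactly the dilaton shift, and it yields the Buscher transformation rule under which the Strominger system is preserved.

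The main obstacle is the spinorial part of the reduction dictionary: identifying invariant spinors on $E$ with density-twisted spinors on $E/T$ in a way simultaneously compatible with the generalized-metric-dependent Clifford module structure --- needed because the Killing spinor operators involve $G$ --- and with the duality isomorphism $\psi$. This is precisely what turns the slogan ``T-duality is an isomorphism of Courant structures, so everything transfers'' into a proof, and it is the origin of the dilaton shift. By contrast, the metric-and-divergence case and the naturality of the curvature and Dirac operators under $\psi$ are comparatively formal once the definitions and reduction machinery of the earlier sections are available.
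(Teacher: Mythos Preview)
Your overall strategy --- descend to $E/T$, transport by $\psi$, lift to $\hat E$ --- is exactly the paper's, and your treatment of the Ricci flow part matches Proposition \ref{prop:duality} and Lemma \ref{lem:Gconnec}: the key step is that $T$-invariant torsion-free $V_+$-compatible connections with prescribed divergence correspond bijectively to such connections on the simple reduction, so the Ricci tensors agree.

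Where you diverge from the paper is in the Killing spinor part. You anticipate that the main obstacle is a density twist in the reduction of the spinor bundle, but in the paper's proof (Theorem \ref{th:duality}) this is much simpler: the twisted spinor bundle $\mathcal{S}_+$ for $V_+$ descends directly to $\mathcal{S}_+/T$, which serves as a spinor bundle for $p_*V_+$ on $E/T$ with no density correction. The point is that simple reduction does not change the rank of $V_+$ --- one is just restricting to invariant sections --- so the Clifford module structure and the operators $D^+_-$, $\slashed D^+$ (built from an invariant $D \in \cD(V_+,div)^T$) descend without modification. The invariant spinor $\eta$ then pushes forward to $p_*\eta \in \Gamma(\mathcal{S}_+/T)$, and $\psi$ carries everything to $\hat E/\hat T$.

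The density twist you describe does appear in the paper, but elsewhere: it enters in Proposition \ref{prop:dilatonshift} as a statement about how the \emph{divergence operator} transforms under reduction (via the decomposition $\mu_g = \mu_{\underline g} \otimes \nu$), not about how the spinor bundle reduces. This dilaton shift is then invoked separately in Theorem \ref{th:Strduality} to check that exactness of $\varphi$ is preserved, which is what pins down the Strominger system among general Killing spinor solutions. So you have conflated two distinct steps: the proof of Theorem \ref{th:duality} itself needs no density bookkeeping on spinors, while the dilaton shift is a subsequent refinement about the divergence data.
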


The definition of T-duality which we consider here is a straightforward generalization of the main implication of 
\cite[Theorem 3.1]{CaGu}, which states that topological T-duality for principal torus bundles--as defined by Bouwknegt, Evslin and Mathai \cite{BEM}--induces a duality isomorphism. 
In addition to topological T-duality, it comprises as a particular case the heterotic T-duality for transitive Courant algebroids in \cite{BarHek}. 
We should mention that our proof works also when $T$ and $\hat T$ are substituted by arbitrary non-abelian groups, but we have not been able to find any interesting examples in this case. Based on our proof, we believe that Theorem \ref{th:Tdualityintro} extends to a fairly general class of Poisson-Lie T-duals, in the sense of Klim\v c\'ik and \v Severa \cite{KlSevera}. We thank \v Severa for clarifications about the non-abelian setup.


One difficulty we have faced in the proof of Theorem \ref{th:Tdualityintro} is that the curvature quantities which are primarily attached to a generalized metric $V_+ \subset E$ are not an invariant of $V_+$, but rather depend on the choice of a metric-compatible torsion-free generalized connection $D$ on $E$ (which always exists, by Proposition \ref{prop:existence}). Thus, the first objective of this work is to define curvature quantities which only depend on the generalized metric $V_+$ upon a choice of divergence operator 
$$
div \colon \Gamma(E) \to C^\infty(M).
$$
A detailed analysis of this question leads us to the definition of the Ricci tensor for a pair $(V_+,div)$ which is independent of choices (see Definition \ref{def:Ricci}). The relation with the Killing spinor equations is established in Proposition \ref{prop:Ricciflat} via a spinorial formula for the Ricci tensor \eqref{eq:Ricci+-op}, that relies in an algebraic Bianchi identity \eqref{eq:bianchi} for the curvature of a torsion-free generalized connection. This is a remarkable property, as the construction of such objects 
typically involves standard connections with skew-symmetric torsion in the tangent bundle of $M$. Formula \eqref{eq:Ricci+-op} can be taken as an alternative definition of the Ricci tensor, without relying on Proposition \ref{propo:Riccitorsion}.


As an application of our framework we revisit some aspects of topological T-duality for exact Courant algebroids. 
The novelty here is to understand the notion of duality for pairs $(V_+,div)$ in this particular context (see Definition \ref{def:dualpairs}), with the upshot of a mathematical explanation of the \emph{dilaton shift} in string theory, as originally observed by Buscher \cite{Buscher1} (see Remark \ref{rem:dilaton}). It would be interesting to compare our general formula \eqref{eq:dilatonshift} for the dilaton shift with \cite[Eq. (3.16)]{OssaQue} in the context of non-abelian T-duality in physics.

The last part of this work is devoted to prove--in Theorem \ref{th:Strduality}--that the solutions of the Hull-Strominger system of partial differential equations are preserved by heterotic T-duality \cite{BarHek}. These equations have its origins in supergravity in physics \cite{HullTurin,Strom}, and they were first considered in the mathematics literature in a seminal paper by Li and Yau \cite{LiYau} (see \cite{GF2} for a review). 
Our proof has two essential ingredients: firstly, the characterization of the Hull-Strominger system in terms of the Killing spinor equations in generalized geometry in \cite{grt} combined with Theorem \ref{th:Tdualityintro}, and, secondly, a general formula for the dilaton shift under T-duality in Proposition \ref{prop:dilatonshift}.
Our result in Theorem \ref{th:Strduality} builds towards the definition of a Strominger-Yau-Zaslow version of mirror symmetry for the Hull-Strominger system, as proposed in \cite{Yau2005}. 
Previous related work, in the realm of semi-flat SYZ mirror symmetry for non-K\"ahler Calabi-Yau manifolds without bundles, can be found in \cite{LTY}.

It is interesting to notice that Proposition \ref{prop:specialholonomy} combined with Theorem \ref{th:duality} implies that metrics with parallel spinors and a continuous abelian group of isometries are preserved by T-duality (cf. \cite{H2,SYZ}). In this way, we recover the observation of \cite{H2,Leung,LYZ,SYZ} that Calabi-Yau, $G_2$ and $Spin(7)$-metrics with torus symmetries are preserved by dualisation of the fibres, with a new proof using spinors which is independent of the dimension. Based on the details of our proof, we believe that this is also true for the non-abelian Poisson-Lie T-duality \cite{KlSevera}. Despite the fact that compact manifolds with special holonomy have no continuous symmetries, it would be worth exploring this perspective of the present work in the abundant local examples that exist in the literature (see e.g. \cite{Salamon}).

The constructions presented in this paper work in arbitrary Courant algebroids, and in particular lead to notions of Ricci flow and Killing spinors for `Courant algebroids over a point', that is, for quadratic Lie algebras. I hope to go back to this interesting interplay with quantum algebra in future work. 



Since this paper was finished, there have been several developments in generalized geometry which build on the results and methods presented in this work \cite{Jurco,SV1}. Furthermore, our Theorem \ref{th:Strduality} has been recently used in \cite{GF2018} to find the first examples of T-dual solutions of the Hull-Strominger system on compact non-K\"ahler manifolds with different topology.

\vspace{12pt}

\noindent
{\bf Acknowledgements.}
I am grateful to Nigel Hitchin for stimulating discussions and insight. The present work is motivated by his teaching during the lecture course `Topics in Generalized Geometry', held at the Mathematical Institute in Oxford in 2012. I am indebted to Roberto Rubio, with whom I did some of the calculations in Proposition \ref{prop:weylfixed}, and for his careful reading of the manuscript. I wish also to thank Luis \'Alvarez-C\'onsul, Xenia de la Ossa, Vicente Cort\'es, Marco Gualtieri, Yolanda Lozano, Pavol \v Severa, Jeffrey Streets, and Daniel Waldram for helpful discussions and comments about the manuscript.


\section{Connections and Dirac generating operators}\label{sec:Dirac}

\subsection{Torsion and divergence of generalized connections}\label{subsec:connection}

A \emph{Courant algebroid} over a manifold $M$ is a vector bundle $E \to M$ equipped with a
fibrewise nondegenerate symmetric bilinear form $\la\cdot,\cdot\ra$, a bilinear bracket $[\cdot,\cdot]$ on the smooth sections $\Gamma(E)$, and a bundle map $\pi \colon E \to TM$ called the anchor, which satisfy the following conditions for all $e_1, e_2, e_3 \in \Gamma(E)$ and $f \in C^\infty(M)$
\begin{itemize}
  \item[(C1):] $[e_1,[e_2,e_3]] = [[e_1,e_2],e_3] + [e_2,[e_1,e_3]]$,
  \item[(C2):] $\pi([e_1,e_2])=[\pi(e_1),\pi(e_2)]$,
  \item[(C3):] $[e_1,f e_2] = \pi(e_1)(f) e_2 + f[e_1,e_2]$,
  \item[(C4):] $\pi(e_1)\la e_2, e_3 \ra = \la [e_1,e_2], e_3 \ra + \la e_2,
    [e_1,e_3] \ra$,
  \item[(C5):] $2[e_1,e_1]=\pi^* d\la e_1,e_1\ra$,
  \end{itemize}
where in the last equality we use $\la\cdot,\cdot\ra$ to identify $E$ with $E^*$. We note that condition (C2) is redundant, as it follows from (C1) and (C3).

A \emph{generalized connection} on the Courant algebroid $E$ is an $E$-connection compatible with the inner product \cite{AXu,G3}, that is, a first-order differential operator
$$
D \colon \Gamma(E) \to \Gamma(E^* \otimes E)
$$
satisfying the Leibniz rule and compatible with $\la\cdot,\cdot\ra$:
\begin{equation}\label{eq:generalizedconn}
\begin{split}
D_{e_1}(fe_2) &= \pi(e_1)(f)e_2 + fD_{e_1}e_2,\\
\pi(e_1)\langle e_2,e_3 \rangle & = \langle D_{e_1} e_2,e_3 \rangle + \langle e_2,D_{e_1} e_3 \rangle.
\end{split}
\end{equation}
We will denote $D_{e_1}(e_2) = \iota_{e_1} D(e_2)$, where $\iota$ is the contraction operator of elements in $E^*$ with elements in $E$.

The set of generalized connections on $E$, that we will denote by $\cD$, has a structure of affine space modelled on the vector space
$$
\Gamma(E^* \otimes \mathfrak{o}(E)),
$$
where $\mathfrak{o}(E)$ denotes the bundle of skew-symmetric endomorphisms of $E$ with respect to the bilinear form $\langle\cdot,\cdot\rangle$.
To see this, given a (standard) orthogonal connection $\nabla^E$ on $(E,\langle\cdot,\cdot\rangle)$ we can construct a generalized connection on $E$
\begin{equation}\label{eq:generalizedconnexp}
D_{e_1} e_2 = \nabla^E_{\pi(e_1)}e_2,
\end{equation}
so that any other generalized connection on $E$ is of the form $D + \chi$, for an element $\chi \in \Gamma(E^* \otimes \mathfrak{o}(E))$.

\begin{definition}\label{def:torsion}
The torsion $T_D \in \Gamma(\Lambda^3 E^*)$ of a generalized connection $D$ on $E$ is defined by
\begin{equation}\label{eq:torsionG}
T_D(e_1,e_2,e_3) = \la D_{e_1}e_2 - D_{e_2}e_1 - [e_1,e_2],e_3 \ra + \la D_{e_3}e_1, e_2 \ra.
\end{equation}
\end{definition}

Using the skew-symmetrized (Courant) bracket $[[e_1,e_2]] = \frac{1}{2}([e_1,e_2] - [e_2,e_1])$ and the compatibility of $D$ with the bilinear form on $E$, it is immediate to see that this definition is equivalent to Gualtieri's torsion, as defined in \cite{G3}. A less direct (although elementary) calculation shows that Definition \ref{def:torsion} is equivalent to Alexseev-Xu's \cite{AXu}, given by the formula
\begin{equation}\label{eq:torsionAXu}
T_D(e_1,e_2,e_3) = c.p. \( \frac{1}{2}\la D_{e_1}e_2 - D_{e_2}e_1, e_3 \ra  - \frac{1}{3} \la [[e_1,e_2]],e_3 \ra \),
\end{equation}
where $c.p.$ stands for sum over cyclic permutations. We note that formula \eqref{eq:torsionAXu} is more general than \eqref{eq:torsionG}, as it defines an element in $\Gamma(\Lambda^3 E^*)$ also for $E$-connections which are not compatible with the ambient metric.

Let $T$ be an element in $\Gamma(\Lambda^3 E^*)$, and consider the set $\cD^T$ of generalized connections on $E$ with fixed torsion $T$
$$
\cD^T \subset \cD.
$$
For a choice of $D \in \cD^T$ and $\chi \in \Gamma(E^* \otimes \mathfrak{o}(E))$, the condition for $D' = D + \chi$ to be in $\cD^T$ is given by
\begin{equation}\label{eq:cyclic}
c.p. \la \chi_{e_1}e_2,e_3\ra = 0, 
\end{equation}
where the left hand side corresponds to the total skew-symmetrization of $\la \chi_{e_1}e_2,e_3\ra$. Using the canonical identification $E \otimes \Lambda^2 E \cong E^* \otimes \mathfrak{o}(E)$, we can regard the affine space $\cD^T$ as modelled on the following vector space of mixed symmetric $3$-tensors
\begin{equation}\label{eq:sigma}
\Sigma = \{\sigma \in \Gamma(E^{\otimes 3}): \sigma_{123} = - \sigma_{132}, \; c.p. \;\sigma_{123} = 0\},
\end{equation}
where $\sigma_{123} = \sigma(e_1,e_2,e_3)$ for $e_i \in \Gamma(E)$ (using $\la \cdot, \cdot\ra$ to identify $E \cong E^*$). Equivalently, considering the canonical exact sequence
\begin{equation}
\label{eq:exacttorsion}
\begin{gathered}
  \xymatrix{ & 0 \ar[r] & S^3 E \ar[r] & S^2 E \otimes E \ar[r] & E \otimes \Lambda^2 E \ar[r] & \Lambda^3 E \ar[r] & 0,}
\end{gathered}
\end{equation}
where $S^kE$ denotes the $k$-th symmetric product, we have 
$$
\Sigma \cong \Gamma((S^2 E \otimes E)/S^3 E).
$$
From the previous sequence we observe that there is a distinguished subspace
$$
E \subset S^2 E \otimes E \colon e \to \la\cdot,\cdot\ra^{-1} \otimes e
$$
inducing a direct sum decomposition
$$
S^2 E \otimes E = (S^2 E \otimes E)_0 \oplus E,
$$
where $(S^2 E \otimes E)_0$ is the subbundle generated by elements of the form
$$
(S^2 E \otimes E)_0 = \la (e_1 \odot e_2) \otimes e: (e_1,e_2) = 0\ra. 
$$
From this, we obtain a splitting
\begin{equation}\label{eq:splittingyoung}
\Sigma = \Sigma_0 \oplus \Gamma(E)
\end{equation}
where $e \in \Gamma(E)$ corresponds to the mixed symmetric tensor $\sigma^e$ defined by
$$
\sigma^e(e_1,e_2,e_3) = \la e_1,e_2 \ra \la e,e_3\ra - \la e,e_2 \ra \la e_1,e_3 \ra
$$
and the complement of $\Gamma(E)$ is given by
\begin{equation}\label{eq:sigma0}
\Sigma_0 = \Big{\{}\sigma \in \Sigma : \sum_{i=1}^{r_E} \sigma(e_i,\tilde e_i,\cdot) = 0\Big{\}}.
\end{equation}
Here, $r_E$ denotes the rank of $E$, $\{e_i\}$ is an orthogonal local frame for $E$ and the $\{\tilde e_i\}$ are sections of $E$ defined so that $\la e_i, \tilde e_j \ra = \delta_{ij}$. More explicitly, given an arbitrary element $\chi \in \Gamma(E^* \otimes \mathfrak{o}(E))$ we have a unique decomposition
\begin{equation}\label{eq:decompositionchi}
\chi = \chi_0 + \chi^e,
\end{equation}
where $\chi_0 \in \Gamma(E^* \otimes \mathfrak{o}(E))$ is such that
$$
\sum_{i=1}^{r_E} (\chi_0)_{e_i}\tilde e_i = 0, 
$$
and
$$
\chi^e_{e_1}e_2 = \la e_1,e_2 \ra e - \la e,e_2 \ra e_1
$$
with 
$$
e = \frac{1}{r_E -1}\sum_{i=1}^{r_E} \chi_{e_i}\tilde e_i.
$$

As observed in \cite{GF}, the $E^*$-valued skew-symmetric endomorphism $\chi^e$ in the decomposition \eqref{eq:decompositionchi} is reminiscent of the `$1$-form valued Weyl endomorphisms' in pseudo-Riemannian geometry, which appear in the variation of a metric connection with fixed torsion upon a conformal change of the metric. Similarly, our ($E^*$-valued) \emph{Weyl endomorphisms} $\chi^e$ enable us to deform a generalized connection $D$ with fixed torsion $T$ inside the space $\cD^T$. Notice that the notion of generalized connection--as considered here--does not allow for conformal changes of the ambient metric $\la\cdot,\cdot\ra$. 

For later applications in this work, it is important to `gauge-fix' the Weyl degrees of freedom in the space of generalized connections $\cD^T$ corresponding to $\Gamma(E)$ in \eqref{eq:splittingyoung}. We will call this procedure  \emph{Weyl gauge fixing}. First, we recall the notion of \emph{divergence} of a generalized connection introduced in \cite{AXu}. Given a generalized connection $D$ on $E$ and $e \in \Gamma(E)$, we can define an endomorphism $De$ of $E$ by $e' \to D_{e'}e$.

\begin{definition}[\cite{AXu}]\label{def:divergence}
The \emph{divergence} of $e \in \Gamma(E)$ with respect to a generalized connection $D$ is
\begin{equation}\label{eq:divergence}
div_D(e) = \tr De \in C^\infty(M).
\end{equation}
\end{definition}

The divergence of a generalized connection defines a first-order differential operator
$$
div_D\colon \Gamma(E) \to C^\infty(M)
$$
satisfying the $\pi$-Leibniz rule
\begin{equation}\label{eq:Leibnizdiv}
div_D(f e) =  \pi(e)(f) + f div_D e,
\end{equation}
for any $f \in C^\infty(M)$. For our applications, it will be useful to have a definition independent of the choice of a generalized connection, which we shall give now.

\begin{definition}\label{def:divergenceop}
A differential operator $div \colon \Gamma(E) \to C^\infty(M)$ satisfying \eqref{eq:Leibnizdiv} will be called a \emph{divergence operator} on $E$.
\end{definition}

Note that the divergence operators on $E$ form an affine space modelled on $\Gamma(E^*)$. Our next result shows that the divergence constrains the Weyl degrees of freedom in the space generalized connections with fixed torsion, as required.

\begin{lemma}\label{lem:weylfixed}
Let $T \in \Gamma(\Lambda^3E^*)$ and $div \colon \Gamma(E) \to C^\infty(M)$ be a divergence operator on $E$. Then
$$
\cD^T(div) = \{D \in \cD \; | \; T_D = T, \; div_D = div \} \subset \cD^T
$$
is an affine space modelled on $\Sigma_0$, as defined in \eqref{eq:sigma0}.
\end{lemma}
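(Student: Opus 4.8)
The plan is to identify $\cD^T(div)$ as an affine subspace of $\cD^T$ by intersecting with the affine fibre of the divergence map, and then to match the modelling vector space with $\Sigma_0$ via the splitting \eqref{eq:splittingyoung}. First I would note that, by the discussion preceding the statement, $\cD^T$ is a nonempty affine space modelled on $\Sigma \cong \Gamma\big((S^2E \otimes E)/S^3E\big)$, with the identification of a deformation $\chi \in \Gamma(E^* \otimes \mathfrak{o}(E))$ satisfying \eqref{eq:cyclic} with its class in $\Sigma$. So fix $D_0 \in \cD^T$ (which exists — this is implicitly guaranteed, and in any case follows from \eqref{eq:generalizedconnexp} together with the ability to adjust torsion); any $D \in \cD^T$ is $D_0 + \chi$ with $\chi$ as above. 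The condition $T_D = T$ is already built in, so the only remaining constraint is $div_D = div$.

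The second step is to compute how the divergence changes under $D \mapsto D_0 + \chi$. Using Definition \ref{def:divergence}, $div_{D_0 + \chi}(e) = \tr(D_0 e) + \tr(\chi_\bullet e) = div_{D_0}(e) + \sum_{i=1}^{r_E} \la \chi_{e_i} e, \tilde e_i\ra$ in an orthogonal local frame $\{e_i\}$ with dual $\{\tilde e_i\}$. By skew-symmetry of $\chi$ in $\mathfrak{o}(E)$, $\la \chi_{e_i} e, \tilde e_i \ra = -\la \chi_{e_i} \tilde e_i, e\ra$, so $div_{D_0+\chi}(e) = div_{D_0}(e) - \la \sum_i \chi_{e_i}\tilde e_i,\, e\ra$. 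Since $div$ and $div_{D_0}$ are both divergence operators, their difference is a genuine element of $\Gamma(E^*)$ (it is $C^\infty$-linear, the $\pi$-Leibniz terms cancelling), so the equation $div_D = div$ becomes the pointwise linear condition $\sum_{i=1}^{r_E} \chi_{e_i}\tilde e_i = \eta$ for a fixed section $\eta \in \Gamma(E) \cong \Gamma(E^*)$ determined by $div - div_{D_0}$. This is precisely the statement that the $\Gamma(E)$-component of $\chi$ in the decomposition \eqref{eq:decompositionchi} is prescribed: with $e = \frac{1}{r_E-1}\sum_i \chi_{e_i}\tilde e_i$, the equation fixes $e$, hence fixes $\chi^e$, leaving $\chi_0$ free. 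Passing to classes in $\Sigma$ and using the splitting $\Sigma = \Sigma_0 \oplus \Gamma(E)$ of \eqref{eq:splittingyoung}, whose $\Gamma(E)$-summand is exactly this component (note $\Sigma_0$ as in \eqref{eq:sigma0} is the kernel of $\sigma \mapsto \sum_i \sigma(e_i,\tilde e_i,\cdot)$), one gets that $\cD^T(div)$ is the affine subspace of $\cD^T$ cut out by fixing the $\Gamma(E)$-coordinate, hence is an affine space modelled on $\Sigma_0$.

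The one point that needs a little care — and which I expect to be the main (minor) obstacle — is the bookkeeping passage from $\chi \in \Gamma(E^* \otimes \mathfrak{o}(E))$ with the cyclic constraint \eqref{eq:cyclic} to its class in $\Sigma$, checking that the decomposition \eqref{eq:decompositionchi} of $\chi$ is compatible with the splitting \eqref{eq:splittingyoung} of $\Sigma$ — i.e. that $\chi^e$ indeed maps to $\sigma^e$ and $\chi_0$ to $\Sigma_0$ — and that the trace functional $\chi \mapsto \sum_i\chi_{e_i}\tilde e_i$ descends to $\Sigma$ and agrees, under $S^2E\otimes E \cong E\otimes \Lambda^2 E$ modulo $S^3E$, with the contraction $\sigma \mapsto \sum_i \sigma(e_i,\tilde e_i,\cdot)$ appearing in \eqref{eq:sigma0}. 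Both are frame-independent linear-algebra verifications using only the nondegeneracy of $\la\cdot,\cdot\ra$; once they are in place the affine structure is immediate, since fixing one coordinate in a direct-sum affine space gives an affine subspace modelled on the complementary summand. I would also remark that nonemptiness of $\cD^T(div)$ follows by choosing any $D_0 \in \cD^T$ and then adding the Weyl term $\chi^e$ with $e$ chosen to realize the prescribed value of $div - div_{D_0}$, which works precisely because $r_E > 1$.
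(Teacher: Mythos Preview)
Your proposal is correct and follows essentially the same approach as the paper: compute the variation of the divergence under $D \mapsto D + \chi$, use the decomposition $\chi = \chi_0 + \chi^e$ to see that fixing the divergence pins down the Weyl component $e$, and conclude that the remaining freedom is $\Sigma_0$. Your treatment is in fact more thorough than the paper's --- you explicitly address nonemptiness and the compatibility of the decompositions \eqref{eq:decompositionchi} and \eqref{eq:splittingyoung}, whereas the paper's proof simply computes the divergence variation and observes that $div_{D'} = div_D$ forces $e = 0$.
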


\begin{proof}
Given $D', D \in \cD^T$, we denote $D' = D + \chi$ with $\chi \in \Sigma$ (see \eqref{eq:sigma}). Then, by definition of the divergence of $e' \in \Gamma(E)$ we have
\begin{equation}\label{eq:divergencevariation}
div_{D'}(e') = div_{D}(e) - \sum_{i=1}^{r_E}\la \chi_{\tilde e_i} e_i,e'\ra.
\end{equation}
Decomposing $\chi = \chi_0 + \chi^e$ as in \eqref{eq:decompositionchi}, for $e \in \Gamma(E)$, we obtain
$$
\sum_{i=1}^{r_E}\chi_{\tilde e_i} e_i = (r_E - 1)e.
$$
Imposing now $div_{D'} = div_D = div$, implies $e = 0$, which concludes the proof.
\end{proof}

\begin{example}\label{ex:divergence}
Let $E = T \oplus T^*$ be an exact Courant algebroid over $M$ with pairing
$$
\la X + \xi, X + \xi \ra = \xi(X),
$$
anchor $\pi(X+\xi) = X$, and bracket
$$
[X+\xi, Y+\eta] = [X,Y] + L_X \eta - \iota_Y d\xi + \iota_Y \iota_X H,
$$
for $H \in \Gamma(\Lambda^3 T^*)$ a closed $3$-form. Given a connection $\nabla$ on $T$, we consider the generalized connection $D$ on $E$ induced by the orthogonal connection $\nabla \oplus \nabla^*$ as in \eqref{eq:generalizedconnexp},
where $\nabla^*$ denotes the induced connection on the cotangent bundle $T^*$. Then
\begin{equation}\label{eq:torsionexample}
T_D(e_1,e_2,e_3) = - \frac{1}{2}\pi^*H(e_1,e_2,e_3) + c.p. \la s(\pi^*T_\nabla(e_1,e_2)),e_3\ra,
\end{equation}
where $s \colon T \to E$ denotes the isotropic splitting $s(X) = X$ and $T_\nabla$ is the torsion of the connection $\nabla$, and
$$
div_D(e) = \tr \nabla X,
$$
for $e = X + \xi$. Assuming now $T_\nabla = 0$, so that $\tr \nabla X = \tr \nabla_X - L_X$, and that there exists a density $\mu \in \Gamma(|\det T^*|)$ preserved by $\nabla$, we obtain
$$
div_D(e) \mu = L_X \mu,
$$
which recovers the standard notion of divergence for the vector field $\pi(e) = X$.
\end{example}

\subsection{Dirac generating operators and Weyl gauge fixing}\label{subsec:genoper}

The aim of this section is to provide a different point of view on the Weyl gauge fixing condition in Lemma \ref{lem:weylfixed}.
In order to do this, we build on the spinorial geometry of $E$ and the theory of \emph{Dirac generating operators} in \cite{AXu,GMXu,Severa}. We hope that our discussion motivates the use of torsion-free generalized connections in the rest of this work.

Let $Cl(E)$ denote the bundle of Clifford algebras of the orthogonal bundle  $(E,\la\cdot,\cdot\ra)$, defined by the relation
$$
e^2 = \la e,e \ra
$$
for any section $e \in \Gamma(E)$. In the rest of this section, we assume that $Cl(E)$ admits an irreducible Clifford module $S$, so that the Clifford action of sections of $Cl(E)$ on sections of $S$ induces an isomorphism $\Gamma(\End S) \cong \Gamma(Cl(E))$. We will assume that the \emph{spinor bundle} $S$ is real, but our discussion can be carried out in an analogue way in the complex case. 

The $\mathbb{Z}_2$-grading on $\Gamma(Cl(E))$ induces a $\mathbb{Z}_2$-grading on the differential operators on $\Gamma(S)$, such that the functions $f \in C^\infty(M)$ acting by multiplication on $\Gamma(S)$ are even operators. Consider the graded commutator on the space of differential operators on $\Gamma(S)$, given by
$$
[L_0,L_1] = L_1L_2 - (-1)^{\deg L_1 \deg L_2}L_2 L_1.
$$
\begin{definition}[\cite{AXu}]\label{def:genoper}
A first-order odd differential operator $\slashed d$ on $\Gamma(S)$ is called a \emph{Dirac generating operator} of the Courant algebroid $E$ if it satisfies the following conditions for every $f \in C^\infty(M)$ and $e_1,e_2 \in \Gamma(E)$
\begin{enumerate}[i)]
\item $[[\slashed d, f],e\cdot] = \pi(e)(f)$,

\item $[[\slashed d,e_1\cdot ],e_2 \cdot ] = [e_1,e_2] \cdot$,

\item $\slashed d^2 \in C^\infty(M)$.

\end{enumerate}
\end{definition}

According to the previous definition, a Dirac generating operator indeed generates the Courant algebroid structure--that is, the bracket and the anchor map-- from the orthogonal bundle $(E,\la \cdot,\cdot \ra)$. The existence of a Dirac generating operator (or simply a generating operator) for a given Courant algebroid which admits a spinor bundle is nontrivial and was one of the main results of \cite{AXu}. Furthermore, in \cite{GMXu,Severa} it was proved that, provided that it exists, there is a canonical choice compatible with \emph{Weyl quatization}. In the following theorem, we summarize the result of this theory that we will need. Let us denote by $|\det T^*|^{1/2}$ the trivializable line bundle of half-densities on $M$. We denote by $r_S$ the rank of $S$.

\begin{theorem}[\cite{AXu,GMXu}]\label{th:genoper}
Assume that the line bundle $(\det S^*)^{\frac{1}{r_S}}$ exists, and consider the twisted spinor bundle
\begin{equation}\label{eq:spinortwistedhalf}
\mathbb{S} = S \otimes (\det S^*)^{\frac{1}{r_S}} \otimes |\det T^*|^{\frac{1}{2}}.
\end{equation}
Then, there exists a canonical generating operator $\slashed d_0$ on $\Gamma(\mathbb{S})$ of $E$. Furthermore, any other generating operator $\slashed d$ is of the form $\slashed d = \slashed d_0  + e \cdot$, for $e\in \Gamma(E)$ such that $[\slashed d_0,e \cdot] \in C^\infty(M)$.
\end{theorem}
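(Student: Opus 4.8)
The plan is to build the canonical generating operator $\slashed d_0$ on $\Gamma(\mathbb{S})$ directly from a torsion-free generalized connection, and then to establish uniqueness up to Clifford multiplication by a section of $E$ via the affine structure on the space of generalized connections. First I would invoke Proposition \ref{prop:existence} to choose a metric-compatible torsion-free generalized connection $D$ on $E$; since the spinor bundle $\mathbb{S}$ in \eqref{eq:spinortwistedhalf} is a twist of $S$ by powers of density-type line bundles, $D$ induces (together with the Levi-Civita-type data on $|\det T^*|^{1/2}$, or rather the flat structure after a choice of trivialization, combined with the induced connection on $(\det S^*)^{1/r_S}$ coming from $D$ acting on $S$) a spinorial connection $\widetilde{D}$ on $\Gamma(\mathbb{S})$. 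The candidate operator is the associated Dirac operator, $\slashed d := \sum_i \tilde e_i \cdot \widetilde{D}_{e_i}$, contracted using an orthogonal local frame $\{e_i\}$ with dual frame $\{\tilde e_i\}$ as in \eqref{eq:sigma0}; one checks frame-independence from orthogonality.

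Next I would verify conditions i)--iii) of Definition \ref{def:genoper} for this $\slashed d$. Condition i) is essentially the Leibniz rule for $\widetilde D$ over $C^\infty(M)$ together with the Clifford relation $e\cdot e' + e'\cdot e = 2\la e,e'\ra$, and it holds for \emph{any} generalized connection. Condition ii) is the crucial one: expanding $[[\slashed d, e_1\cdot],e_2\cdot]$ and using compatibility of $D$ with $\la\cdot,\cdot\ra$, one gets an expression in $D_{e_1}e_2 - D_{e_2}e_1$ plus correction terms involving the trace part; the torsion-free condition \eqref{eq:torsionG} (equivalently the vanishing of \eqref{eq:torsionAXu}) is exactly what forces this to collapse to $[e_1,e_2]\cdot$. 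Condition iii), that $\slashed d^2$ is a function, uses the Clifford-module Bochner-type identity: $\slashed d^2$ is a priori a second-order operator, but its symbol vanishes by the Clifford relation, and the first-order part vanishes because of torsion-freeness and the particular density twist in \eqref{eq:spinortwistedhalf} — this is where the choice of $(\det S^*)^{1/r_S}\otimes|\det T^*|^{1/2}$ is pinned down, ensuring the would-be connection-Laplacian term and the curvature term combine into a scalar. I would then cite \cite{GMXu} for the statement that this $\slashed d_0$ is moreover the distinguished one compatible with Weyl quantization (this part is not reproved here, only the existence).

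For the final clause, I would argue that if $\slashed d$ is any other generating operator, then $\slashed d - \slashed d_0$ is a first-order odd operator on $\Gamma(\mathbb{S})$ whose commutator with multiplication by $f\in C^\infty(M)$ vanishes (by i) for both operators), hence $\slashed d - \slashed d_0$ is $C^\infty(M)$-linear, i.e. a section of $\mathrm{Cl}(E)\cong\End S$ of odd degree. Condition ii) applied to the difference then forces its Clifford-double-bracket with $E$ to vanish, and a short computation in the Clifford algebra shows the only odd elements with that property are of the form $e\cdot$ for $e\in\Gamma(E)$ (this is the odd-degree-one component; higher odd-degree Clifford elements do not commute correctly). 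Finally, $\slashed d^2 = \slashed d_0^2 + \{\slashed d_0, e\cdot\} + (e\cdot)^2$ and $(e\cdot)^2 = \la e,e\ra\in C^\infty(M)$, so iii) for $\slashed d$ is equivalent to $[\slashed d_0, e\cdot]\in C^\infty(M)$, giving the stated characterization.

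The main obstacle I anticipate is the verification of condition iii), i.e. showing that $\slashed d_0^2$ is genuinely a function rather than a first-order operator: this requires carefully tracking how the curvature of $D$ on $S$ interacts with the density twists, and using the algebraic Bianchi identity \eqref{eq:bianchi} for torsion-free generalized connections together with the trace-normalization that defines $(\det S^*)^{1/r_S}$. A secondary delicate point is the frame computation in condition ii), where one must be careful that the trace/Weyl part of $D$ (which is \emph{not} fixed by the torsion alone, per the discussion around \eqref{eq:decompositionchi}) does not obstruct the identity — in fact, different torsion-free $D$'s differing by an element of $\Sigma$ will yield generating operators differing by a Clifford multiplication, consistent with the uniqueness-up-to-$e\cdot$ statement, and the $\Sigma_0$ part (Lemma \ref{lem:weylfixed}) should be seen to drop out entirely while the $\Gamma(E)$ part produces exactly the ambiguity $e\cdot$.
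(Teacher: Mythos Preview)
This theorem is not proved in the paper: it is quoted from \cite{AXu,GMXu}, and the paper only records a one-paragraph sketch of the idea (the passage immediately following the statement) together with the explicit formula \eqref{eq:genopexp} inside the proof of Proposition~\ref{prop:weylfixed}. So there is no detailed proof to compare against, only that sketch. Your strategy---build $\slashed d_0$ as the Dirac operator of a torsion-free generalized connection acting on $\mathbb{S}$---is indeed the idea behind \eqref{eq:genopexp}, but several points in your write-up are off.

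First, two misplaced citations. Proposition~\ref{prop:existence} concerns $V_+$-\emph{compatible} torsion-free connections and presupposes a generalized metric, which is not part of the hypotheses here; you should instead take any generalized connection (e.g.\ \eqref{eq:generalizedconnexp}) and kill its torsion via $D\mapsto D-\tfrac{1}{3}T_D$. Likewise, the Bianchi identity \eqref{eq:bianchi} is stated only for metric-compatible connections and for the mixed curvature $R_D^\pm$; it is not available to you in proving condition~iii).

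Second, and more substantively, you have the role of the half-density twist backwards. You write that different torsion-free $D$'s ``will yield generating operators differing by a Clifford multiplication'' and that ``the $\Gamma(E)$ part produces exactly the ambiguity $e\cdot$''. That is the situation on $\mathcal{S}$ (no half-density), not on $\mathbb{S}$. The entire point of tensoring with $|\det T^*|^{1/2}$ is that the induced $E$-connection $D^\Lambda$ on the half-density line (see \eqref{eq:Dlambda}) contributes the term $e\cdot$ in \eqref{eq:genopexp}, which exactly cancels the Weyl/$\Gamma(E)$ ambiguity in the choice of torsion-free $D$. Thus on $\mathbb{S}$ \emph{all} torsion-free $D$ give the \emph{same} Dirac operator $\slashed d_0$; that is what makes it canonical. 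Your final paragraph therefore misidentifies where the uniqueness statement comes from: the residual $e\cdot$ in the theorem parametrizes \emph{other} generating operators, not an ambiguity in constructing $\slashed d_0$ itself.

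Finally, a gap in the uniqueness argument: from conditions i) and ii) you correctly deduce that $\slashed d - \slashed d_0$ is an odd section of $Cl(E)$ whose double bracket with $\Gamma(E)$ vanishes, but you then assert without proof that this forces it to lie in degree one. This is the step that actually requires work (it is a Clifford-algebra computation showing the higher odd components must vanish), and ``a short computation'' undersells it.
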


The main idea to construct a generating operator on $\Gamma(\mathbb{S})$ is to start with a Dirac operator $\slashed D$ on $\Gamma(S)$ for a choice of generalized connection $D$, and then try to render the construction independent of choices. The definition of $\slashed D$ involves a choice of spin $E$-connection, and this ambiguity is fixed by the twist $(\det S^*)^{\frac{1}{r_S}}$. Another ambiguity comes from the torsion of $D$, regarded as an operator on $\Gamma(S)$ by Clifford multiplication. Finally, once the torsion is fixed, there is still an ambiguity coming from the Weyl endomorphisms in decomposition \eqref{eq:splittingyoung}, which is removed by the twist $|\det T^*|^{\frac{1}{2}}$ and the divergence operator. 

In this context, the \emph{Weyl gauge fixing} in Lemma \ref{lem:weylfixed} amounts to choosing a generating operator $\slashed d$ in the space of sections of a different twisted spinor bundle, namely
\begin{equation}\label{eq:spinortwisted}
\mathcal{S} = S \otimes (\det S^*)^{\frac{1}{r_S}},
\end{equation}
and to constructing a space of generalized connections $D$ on $E$ with fixed Dirac operator $\slashed D = \slashed d$. An interesting consequence of this condition is that $D$ is forced to be torsion-free, which motivates the use of these type of generalized connections in the present work. Note here that, since $|\det T^*|^{\frac{1}{2}}$ is trivializable, a generating operator on $\Gamma(\mathcal{S})$ always exists under the assumptions of Theorem \ref{th:genoper}. 
Before going into any details of our construction, let us discuss briefly a basic example.

\begin{example}\label{example:dirac}
Let $E = T \oplus T^*$ be an exact Courant algebroid over $M$, as considered in Example \ref{ex:divergence}. For the choice of Lagrangian $T^* \subset E$, we have a spinor bundle $S = \Lambda^*T^*$, with Clifford action
\begin{equation}\label{eq:Cliffordnoncan}
(X + \xi) \cdot \alpha = \iota_X \alpha + \xi \wedge \alpha,
\end{equation}
where $\alpha \in \Gamma(\Lambda^*T^*)$. Then, $\det S^* \cong |\det T|^{\frac{r_S}{2}}$ and the twisted spinor bundles are $\mathcal{S} = \Lambda^*T^* \otimes |\det T|^{\frac{1}{2}}$ and
$$
\mathbb{S} =  \Lambda^* T^* \otimes ( \det T \otimes \det T^*)^{\frac{1}{2}}\cong \Lambda^* T^*,
$$
with canonical generating operator on $\mathbb{S}$ given by
$$
\slashed d_0 = d - H \wedge.
$$
If $\slashed d$ is another generating operator on $\mathbb{S}$, then $\slashed d = \slashed d_0 + e \cdot$ and $[\slashed d_0,e\cdot] \in C^\infty(M)$. Setting $e = X + \xi$, we have
$$
[\slashed d_0,e\cdot] = L_X + (\iota_X H + d\xi)\wedge,
$$
and therefore $\pi(e) = X = 0$ and $d\xi = 0$. Choosing now a trivialization $\mu^{\frac{1}{2}}_0$ of $|\det T^*|^{\frac{1}{2}}$, any other trivialization is of the form $\mu^{\frac{1}{2}} = e^f \mu^{\frac{1}{2}}_0 \in \Gamma(|\det T^*|^{\frac{1}{2}})$ for a smooth function $f \in C^\infty(M)$, and therefore 
using the canonical generating operator on $\mathbb{S}$ we have a generating operator $\slashed d$ on $\mathcal{S}$ corresponding to $\mu^{\frac{1}{2}}$:
$$
\slashed d = e^{-f}\slashed d_0 e^f = \slashed d_0 = d - (H - df)\wedge.
$$
In conclusion, the generating operators on $\Gamma(\mathbb{S})$ are parameterized by closed $1$-forms, while the generating operators on $\Gamma(\mathcal{S})$ given by pull-back of $\slashed d_0$ 
form an affine space modelled on exact $1$-forms.
\end{example}

Given a generalized connection $D$ on $E$, we have an induced $E$-connection on $Cl(E)$, which we denote also by
$$
D \colon \Gamma(Cl(E)) \to \Gamma(E^* \otimes Cl(E)).
$$
An $E$-connection on $S$ compatible with $D$ is a first differential operator
$$
D^S \colon \Gamma(S) \to \Gamma(E^* \otimes S)
$$
satisfying the Leibniz rule with respect to the anchor map $\pi$ (cf. \eqref{eq:generalizedconn}) and such that
$$
D^S(e \cdot \alpha) = D(e)\cdot \alpha + e \cdot D^S \alpha.
$$
Given an $E$-connection on $S$, there is an induced Dirac operator
$$
\slashed D^S \colon \Gamma(S) \to \Gamma(S),
$$
defined explicitly by
$$
\slashed D^S \alpha = \frac{1}{2}\sum_{i=1}^{r_E} \tilde e_i \cdot D^S_{e_i} \alpha
$$
for any orthogonal local frame $\{e_i\}$ of $E$, where $\cdot$ denotes Clifford multiplication. A pair of $E$-connections on $S$ compatible with the same generalized connection $D$ differ by an element $e \in \Gamma(E)$, regarded as the $E^*$-valued endomorphisms of $S$ given by  $\alpha \to \la e,\cdot\ra\otimes \alpha$ (see \cite[Lemma 3.4]{GMXu}). From this, it follows that $D$ induces canonically a Dirac operator on the twisted spinor bundle $\mathcal{S}$ (see \eqref{eq:spinortwisted})
$$
\slashed D \colon \Gamma(\mathcal{S}) \to \Gamma(\mathcal{S}).
$$


\begin{definition}\label{def:weylfixed}
Let $\slashed d \colon \Gamma(\mathcal{S}) \to \Gamma(\mathcal{S})$ be a generating operator of the Courant algebroid $E$. The family of 
generalized connections associated to $\slashed d$ is defined by:
\begin{equation}\label{eq:Diracfamily}
\cD(\slashed d) = \{D \in \cD | \slashed D = \slashed d \}.
\end{equation}
\end{definition}

In the next lemma we prove that $\cD(\slashed d)$ provides a particular example of the family of Weyl-gauge fixed generalized connections in Lemma \ref{lem:weylfixed}.

\begin{proposition}\label{prop:weylfixed}
The space $\cD(\slashed d)$ is non-empty, and all its elements are torsion-free. Furthermore, $\slashed d$ determines uniquely a divergence operator $div$ and, with the notation in Lemma \ref{lem:weylfixed}, there is an equality $\cD(\slashed d) = \cD^0(div)$.
\end{proposition}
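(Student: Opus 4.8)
The plan is to break the statement into three assertions and handle them in order: (a) $\cD(\slashed d)$ is non-empty; (b) every $D \in \cD(\slashed d)$ is torsion-free; (c) $\cD(\slashed d) = \cD^0(div_D)$ for any $D$ in the family. For (a), I would start from an arbitrary generalized connection $D_1$ with its induced Dirac operator $\slashed D_1$ on $\Gamma(\mathcal{S})$. Both $\slashed D_1$ and the given generating operator $\slashed d$ are odd first-order operators on $\Gamma(\mathcal{S})$; by Theorem \ref{th:genoper} (applied to the bundle $\mathcal{S}$, using that $|\det T^*|^{1/2}$ is trivializable) the space of generating operators on $\Gamma(\mathcal{S})$ is an affine space, so $\slashed d - \slashed D_1 = \theta$ for some tensorial datum. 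I would then identify which tensorial perturbations $D_1 \rightsquigarrow D_1 + \chi$ produce a prescribed change of $\slashed D$: using $\slashed D_\chi \alpha - \slashed D_1 \alpha = \tfrac12 \sum_i \tilde e_i \cdot \chi_{e_i}\alpha$ together with the standard spin representation of $\mathfrak{o}(E)$ on $S$, the map $\chi \mapsto (\slashed D_{D_1+\chi} - \slashed D_1)$ should be surjective onto the relevant affine directions, so one can solve for $\chi$ and obtain $D \in \cD(\slashed d)$.

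For (b), the key point is that the Dirac operator only sees a particular piece of the torsion. Writing $\chi \in \Gamma(E^* \otimes \mathfrak{o}(E))$ for the difference $D - D'$ of two connections with the same torsion $T$, one has the decomposition of $\Sigma$ from \eqref{eq:splittingyoung}, and the image of the torsion map $\cD \to \Gamma(\Lambda^3 E^*)$, $D \mapsto T_D$, together with the Dirac-operator map, should jointly determine $\chi$ entirely. Concretely, I expect that the Clifford-algebra element obtained by Clifford-multiplying the torsion $3$-form $T_D$ into $\Gamma(\mathcal{S})$ appears as the order-zero part of $\slashed D$ relative to a reference torsion-free connection (which exists by Proposition \ref{prop:existence}); since a generating operator $\slashed d$ satisfies $\slashed d^2 \in C^\infty(M)$ and the defining relations i)–iii), one checks that this forces the totally skew part of $\slashed D$'s symbol to vanish, hence $T_D = 0$. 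I would phrase this by comparing $\slashed D$ with $\slashed D^{\mathrm{tf}}$ for a torsion-free $D^{\mathrm{tf}}$ and noting the difference is Clifford multiplication by $\tfrac{1}{4}$ or $\tfrac{1}{6}$ times $T_D$ (up to a constant, following \cite{AXu,GMXu}); matching with the fact that $\slashed d$ is itself a generating operator pins $T_D$ to $0$.

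For (c), once (b) is known every element of $\cD(\slashed d)$ lies in $\cD^0 \defeq \cD^{T=0}$, so fixing any $D \in \cD(\slashed d)$ I must show that within $\cD^0$ the condition $\slashed D' = \slashed d$ is equivalent to $div_{D'} = div_D$. Both sides cut out affine subspaces of $\cD^0$; by Lemma \ref{lem:weylfixed} the divergence-fixed side is modelled on $\Sigma_0$. On the other side, for $\chi \in \Sigma$ the computation $\slashed D_{D+\chi} - \slashed D = \tfrac12 \sum_i \tilde e_i \cdot \chi_{e_i}$ should vanish on $\mathcal{S}$ precisely when the $\Gamma(E)$-component $\chi^e$ in \eqref{eq:decompositionchi} is zero — because the $\Sigma_0$-part, being the "traceless" piece, Clifford-multiplies to something that acts trivially on the twisted bundle $\mathcal{S} = S \otimes (\det S^*)^{1/r_S}$ (the determinant twist is exactly what kills it), whereas the Weyl piece $\chi^e$ contributes a nonzero term proportional to $e\,\cdot$. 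This would show both affine spaces are modelled on $\Sigma_0$ and share the basepoint $D$, giving equality. The main obstacle I anticipate is (b): correctly bookkeeping the constant factors and the precise way the torsion $3$-form enters the symbol of $\slashed D$ after the determinant twist, and verifying that no part of a nonzero torsion can be hidden from $\slashed d$ — this is where the defining properties i)–iii) of a generating operator, rather than just the affine structure, must be used. One should also double-check the degenerate low-rank case $r_E = 1$ implicit in the Weyl decomposition, though this does not occur for the Courant algebroids of interest.
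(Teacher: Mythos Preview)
Your overall three-part structure matches the paper, but parts (b) and (c) contain genuine gaps.

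For (b), your argument is not correct as stated. The torsion $T_D$ contributes to $\slashed D$ as a zeroth-order Clifford multiplication term, not to the \emph{symbol}, so appealing to the symbol is a red herring; and property iii) ($\slashed d^2 \in C^\infty(M)$) alone does not force $T_D = 0$. The paper instead uses a direct formula (attributed to Rubio):
\[
\iota_{e_2}\iota_{e_1}T_D \cdot \;=\; [[\slashed D, e_1\cdot], e_2\cdot] - [e_1,e_2]\cdot,
\]
valid for any generalized connection. If $\slashed D = \slashed d$, property ii) in Definition \ref{def:genoper} says the right-hand side vanishes identically, so $T_D = 0$. This is the missing key step; your comparison with a reference torsion-free connection cannot close the argument without something like this, because the difference $\slashed D - \slashed D^{\mathrm{tf}}$ also carries a degree-one piece coming from the trace of $\chi$, and you have no independent control over $\slashed D^{\mathrm{tf}}$.

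For (c), your conclusion is right but the mechanism is wrong. The determinant twist $(\det S^*)^{1/r_S}$ does \emph{not} kill the $\Sigma_0$-contribution; its role is earlier, removing the ambiguity in the choice of spin $E$-connection compatible with $D$. What actually happens is the identity
\[
\slashed D' \;=\; \slashed D - \tfrac{1}{4}\,c.p.(\sigma)\cdot \;-\; \tfrac{1}{4}\sum_i \chi_{\tilde e_i}e_i \cdot
\]
for $D' = D + \chi$. If $\chi \in \Sigma$ (torsion-free preserved), then $c.p.(\sigma)=0$ by definition of $\Sigma$; if moreover $\chi \in \Sigma_0$, the trace term vanishes as well. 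Conversely, $\slashed D' = \slashed D$ forces both terms to vanish separately (they live in Clifford degrees $3$ and $1$), giving $\chi \in \Sigma_0$, and then \eqref{eq:divergencevariation} yields $div_{D'} = div_D$. So the equality $\cD(\slashed d) = \cD^0(div_D)$ is purely an algebraic consequence of this Dirac-variation formula, with no role for the determinant twist.

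For (a), your surjectivity sketch is workable but vague; the paper does it in two explicit steps, first replacing $D$ by $D - \tfrac{1}{3}T_D$ to kill torsion, then adjusting by a Weyl endomorphism $\chi^{e}_+$ (using $\slashed D'' = \slashed D' + \tfrac{r_E-1}{4}e\cdot$) to match the remaining degree-one discrepancy with $\slashed d$.
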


\begin{proof}
The proof follows closely the construction of generating Dirac operators in \cite{AXu}. Let $D$ be an arbitrary generalized connection on $E$. Note that the divergence defines an $E$-connection on $\det T^*$ by
\begin{equation}\label{eq:Dlambda}
D^\Lambda_{e_1} \mu = L_{\pi(e_1)}\mu - div_D(e)\mu,
\end{equation}
where $\mu \in \Gamma(\det T^*)$. Choose a trivialization $\mu^{\frac{1}{2}}$ of $|\det T^*|^{\frac{1}{2}}$, inducing an isomorphism $\mathcal{S} \cong \mathbb{S}$, and define $e \in \Gamma(E)$ by
$$
\la e,\cdot \ra = -\frac{1}{2}\mu^{-\frac{1}{2}}D^\Lambda \mu^{\frac{1}{2}} \in \Gamma(E).
$$
Then, by \cite[Proposition 5.12]{GMXu}, the following formula defines the canonical generating operator on $\Gamma(\mathcal{S}) \cong \Gamma(\mathbb{S})$
\begin{equation}\label{eq:genopexp}
\slashed d_0 = \slashed D + \frac{1}{4}T_D \cdot + e \cdot.
\end{equation}
Let $\slashed d = \slashed d_0 + e' \cdot$ be an arbitrary generating operator on $\Gamma(\mathcal{S})$. Using the metric $\la\cdot,\cdot\ra$, we regard the torsion $T_{D}$ as an element in $\Gamma(E^* \otimes \mathfrak{o}(E))$ and consider $D' = D - \frac{1}{3}T_D$, so that $T_D = 0$ (see \eqref{eq:cyclic}) and $\slashed d = \slashed D' + e'' \cdot$, where $e'' = e + e'$. Finally, setting $D'' = D' + \chi^{e'''}$ we have
\begin{equation}\label{eq:diracvariation}
\slashed D'' =  \slashed D' + \frac{r_E - 1}{4}e''' \cdot
\end{equation}
and hence for $e''' = -4e''/(r_E-1)$ we obtain $\slashed d = \slashed D''$.

Assume now that $\slashed d = \slashed D$ for a generating operator $\slashed d$ on $\Gamma(\mathcal{S})$ and a generalized connection $D$ on $E$. Using \emph{Rubio's formula} for the torsion \cite{Rubiopr}, given by
$$
\iota_{e_2} \iota_{e_1} T_D \cdot = [[\slashed D,e_1\cdot],e_2 \cdot] - [e_1,e_2] \cdot
$$
for $e_1,e_2 \in \Gamma(E)$, it follows from property $ii)$ in Definition \ref{def:genoper} that $T_D = 0$. Given $D' = D + \chi$ with $\chi \in \Gamma(E^* \otimes \mathfrak{o}(E))$, we denote
$$
\sigma(e_1,e_2,e_3) = \la \chi_{e_1}e_2,e_3\ra 
$$
and let $c.p.(\sigma)$ be its total skew-symmetrization. Then, a direct calculation shows that
\begin{equation}\label{eq:diracdivergencevariation}
\slashed D' = \slashed D - \frac{1}{4} c.p.(\sigma) \cdot - \frac{1}{4} \sum_{i=1}^{r_E}\chi_{\tilde e_i} e_i \cdot.
\end{equation}
for $e' \in \Gamma(E)$. Decomposing $\chi = \chi_0 + \chi^e$ as in \eqref{eq:decompositionchi}, for $e \in \Gamma(E)$, yields
$$
\sum_{i=1}^{r_E}\chi_{\tilde e_i} e_i = (r_E - 1)e.
$$
Imposing now $\slashed D' = \slashed D = \slashed d$, we obtain $c.p.(\sigma) = 0$ and therefore $e = 0$ by \eqref{eq:diracdivergencevariation}. Finally, \eqref{eq:divergencevariation} implies that $div_{D'}(e') = div_{D}(e')$, which concludes the proof.
\end{proof}


\begin{example}
Consider the exact Courant algebroid $E$ endowed with the generalized connection $D$ in Example \ref{ex:divergence}. Assume that $\nabla$ is torsion-free and that it preserves the density $\mu \in \Gamma(|\det T^*|)$. Then, we have
$$
D^\Lambda_e \mu = \nabla_{\pi(e)}\mu,
$$
and an induced spin connection on $\mathcal{S} \cong \det T^* \otimes |\det T|^{\frac{1}{2}}$ given by
$$
D(\alpha \otimes \mu^{-\frac{1}{2}}) = \nabla (\alpha \otimes \mu^{-\frac{1}{2}}), 
$$
with Dirac operator
$$
\slashed D (\alpha \otimes \mu^{-\frac{1}{2}}) = (d\alpha + (\mu^{\frac{1}{2}} \nabla \mu^{-\frac{1}{2}}) \wedge \alpha) \otimes \mu^{-\frac{1}{2}}.
$$
Combining these formulae with \eqref{eq:torsionexample}, it is easy to see that $\slashed d_0$ coincides with the twisted de Rham differential $d - H \wedge$, as claimed in Example \ref{example:dirac}.
\end{example}

\section{Generalized metrics and torsion-free connections}\label{sec:ggmetrics}

\subsection{The many Levi-Civita connections}\label{sec:ggmetrics}

In this section we investigate an analogue in generalized geometry of the \emph{Fundamental Lemma of pseudo-Riemannian geometry}, that is, the existence of a unique torsion-free connection compatible with a given metric. In generalized geometry there is always a freedom to choose a compatible torsion-free generalized connection, that we will constrain via the Weyl gauge fixing mechanism introduced in Lemma \ref{lem:weylfixed}. This will lead us to the definition of four canonical operators in the next sections, providing a weak analogue of the Fundamental Lemma.

Let $(t,s)$ be the signature of the pairing on the Courant algebroid
$E$. A generalized metric of signature $(p,q)$, or simply a metric on
$E$, is a reduction of the $O(t,s)$-bundle of frames of $E$ to
$$
O(p,q)\times O(t-p,s-q) \subset O(t,s).
$$
Alternatively, it is given by an orthogonal decomposition
$$
E = V_+ \oplus V_-
$$
such that the restriction of the metric on $E$ to $V_+$ is a
non-degenerate metric of signature $(p,q)$. A generalized metric determines
a vector bundle isomorphism
$$
G\colon E \to E,
$$
with $\pm 1$-eigenspace $V_\pm$, which is symmetric, $G^* = G$, and
squares to the identity, $G^2 = \Id$. The endomorphism $G$ determines
completely the metric, as $V_+$ is recovered by
$$
V_+ = \Ker(\Id - G).
$$

\begin{definition}[\cite{G3}]
A generalized connection $D$ on $E$ is compatible with a generalized metric $V_+ \subset E$ if
$$
D (\Gamma(V_\pm)) \subset \Gamma(E^* \otimes V_\pm).
$$
\end{definition}

A generalized connection $D$ compatible with $V_+$ is determined by four first-order differential operators
\begin{align*}
D^+_- & \colon \Gamma(V_+) \to \Gamma(V_-^* \otimes V_+), \qquad D^-_+ \colon \Gamma(V_-) \to \Gamma(V_+^* \otimes V_-),\\
D^+_+ & \colon \Gamma(V_+) \to \Gamma(V_+^* \otimes V_+), \qquad D^-_- \colon \Gamma(V_-) \to \Gamma(V_-^* \otimes V_-),
\end{align*}
satisfying the Leibniz rule (formulated in terms of $\pi_\pm = \pi_{|V_\pm}$). Thus, the space of $V_+$-compatible generalized connections on $E$, which we denote $\cD(V_+)$, is an affine space modelled on
\begin{equation}\label{eq:vectorspDV+}
\Gamma(E^* \otimes \mathfrak{o}(V_+)) \oplus \Gamma(E^* \otimes \mathfrak{o}(V_-)).
\end{equation}
As we shall see next, the \emph{mixed-type operators} $D^\pm_\mp$ are fixed, if we vary a generalized connection $D$ inside $\cD(V_+)$ while preserving the torsion $T_D$. Furthermore, when the torsion is of \emph{pure-type}, that is 
$$
T_D \in  \Gamma(\Lambda^3 V_+^* \oplus \Lambda^3 V_-^*),
$$
then $D^\pm_\mp$ are uniquely determined by the generalized metric and the bracket on $E$. Given $e \in \Gamma(E)$, we denote its orthogonal projection onto $V_\pm$ by
$$
e^\pm = \frac{1}{2}(\Id \pm G)e.
$$

\begin{lemma}\label{lem:mixedfixed}
Let $D \in \cD(V_+)$ with torsion $T \in \Gamma(\Lambda^3 E^*)$. 
\begin{enumerate}[i)]

\item If $D' \in \cD(V_+)$ and $T_{D'} = T$, then 
$(D')^\pm_\mp = D^\pm_\mp$.

\item Furthermore, $T$ is of pure-type if and only if the mixed-type operators $D^\pm_\mp$ are
\begin{equation}\label{eq:mixedcanonical}
D_{e_1^-}e_2^+ = [e_1^-,e_2^+]^+, \qquad D_{e_1^+}e_2^- = [e_1^+,e_2^-]^-.
\end{equation}
\end{enumerate}
\end{lemma}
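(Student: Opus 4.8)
The plan is to compute directly how the mixed-type operators $D^\pm_\mp$ transform when we move inside $\cD(V_+)$ preserving torsion, and then to verify that the formulae \eqref{eq:mixedcanonical} indeed define a compatible connection with pure-type torsion. For part $i)$, write $D' = D + \chi$ with $\chi \in \Gamma(E^* \otimes (\mathfrak{o}(V_+) \oplus \mathfrak{o}(V_-)))$, which is the model vector space for $\cD(V_+)$ in \eqref{eq:vectorspDV+}. The key point is that $\chi_{e_1}e_2$ preserves the splitting $E = V_+ \oplus V_-$ in the second slot, so the mixed-type part of $\chi$ is $\chi_{e_1^-}e_2^+ \in \Gamma(V_+)$ and $\chi_{e_1^+}e_2^- \in \Gamma(V_-)$; there is no further constraint from compatibility alone. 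Now impose $T_{D'} = T_D = T$. Feeding $\chi$ into the torsion formula \eqref{eq:torsionG}, the torsion-preserving condition is the cyclic identity \eqref{eq:cyclic}, i.e. $c.p.\,\la \chi_{e_1}e_2,e_3\ra = 0$. Evaluate this on the triples $(e_1^-,e_2^+,e_3^+)$ and on $(e_1^+,e_2^-,e_3^-)$: because $\chi$ preserves $V_\pm$ in the output and $\la V_+,V_-\ra = 0$, several cyclic terms vanish on grounds of type, and one is left with a relation forcing the mixed components $\la \chi_{e_1^-}e_2^+,e_3^+\ra$ and $\la\chi_{e_1^+}e_2^-,e_3^-\ra$ to vanish identically. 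Hence $(D')^\pm_\mp = D^\pm_\mp$, proving $i)$.

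For part $ii)$, the strategy is: first, exhibit \emph{some} $D \in \cD(V_+)$ with pure-type torsion, and then use part $i)$ together with a direct computation to identify its mixed operators with \eqref{eq:mixedcanonical}. Existence of a pure-type compatible connection can be obtained by hand — e.g. build the mixed operators by the formulae in \eqref{eq:mixedcanonical} and the pure operators $D^\pm_\pm$ by any auxiliary $V_\pm$-connection, then check that the resulting $D$ is a generalized connection (Leibniz and metric-compatibility are routine, using that $e \mapsto e^\pm$ is $C^\infty(M)$-linear) and that its torsion lies in $\Gamma(\Lambda^3 V_+^* \oplus \Lambda^3 V_-^*)$. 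The torsion computation is the substantive check: one plugs $(e_1^-,e_2^+,e_3^+)$ etc. into \eqref{eq:torsionG}, substitutes $D_{e_1^-}e_2^+ = [e_1^-,e_2^+]^+$ and $D_{e_1^+}e_2^- = [e_1^+,e_2^-]^-$, and verifies that all the genuinely mixed components of $T_D$ cancel — this uses axiom (C4) (the compatibility of the Courant bracket with $\la\cdot,\cdot\ra$) to convert $\la D_{e_3}e_1, e_2\ra$-type terms into bracket terms of the right type. Once existence is in hand, part $i)$ says \emph{any} pure-type $D \in \cD(V_+)$ has these same mixed operators, giving uniqueness.

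The main obstacle is the torsion bookkeeping in $ii)$: formula \eqref{eq:torsionG} is not manifestly cyclic-symmetric in an obvious way, and the term $\la D_{e_3}e_1,e_2\ra$ must be handled carefully when $e_3$ is of one type and $e_1,e_2$ of another, since a priori $D_{e_3}$ acts on a pure-type section and lands in the same type, so $\la D_{e_3^+}e_1^-,e_2^-\ra$ is a pure $V_-$ pairing — one has to check this against the bracket terms via (C4). I would organize the verification by systematically listing the four mixed "type classes" of triples (namely $(-,+,+)$, $(+,-,-)$ and their permutations) and checking vanishing of $T_D$ on each; once a consistent choice of the defining formulae \eqref{eq:mixedcanonical} makes one class vanish, the others follow by the symmetry $V_+ \leftrightarrow V_-$ of the setup. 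Everything else — Leibniz rules, affineness, $C^\infty$-linearity of the projections — is bookkeeping that I would not spell out in detail.
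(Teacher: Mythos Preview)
Your argument for part $i)$ is correct and essentially the same as the paper's: evaluating the torsion (or equivalently the cyclic condition on $\chi$) on a triple of type $(-,+,+)$, the terms $\la \chi_{e_2^+}e_3^+, e_1^-\ra$ and $\la \chi_{e_3^+}e_1^-, e_2^+\ra$ vanish by type, forcing $\chi_{e_1^-}e_2^+ = 0$.

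For part $ii)$, however, your route is both more laborious than necessary and contains a genuine gap. The paper does not construct an auxiliary connection at all. It simply evaluates the torsion formula \eqref{eq:torsionG} on $(e_1^-,e_2^+,e_3^+)$ for an \emph{arbitrary} $D \in \cD(V_+)$: since $D_{e_2^+}e_1^- \in V_-$ and $D_{e_3^+}e_1^- \in V_-$ pair to zero against $V_+$, one obtains the identity
\[
T(e_1^-,e_2^+,e_3^+) = \la D_{e_1^-}e_2^+ - [e_1^-,e_2^+],\, e_3^+\ra
\]
for every compatible $D$. Setting the left side to zero (pure-type hypothesis) gives \eqref{eq:mixedcanonical} immediately; subtracting the identity for $D$ and $D'$ gives $i)$. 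No auxiliary construction, no Leibniz or metric-compatibility checks, and no appeal to (C4) are required.

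The gap in your version is the final step: you write ``part $i)$ says any pure-type $D \in \cD(V_+)$ has these same mixed operators''. But part $i)$ asserts equality of mixed operators only under the hypothesis $T_{D'} = T_D$, i.e.\ \emph{equal} torsions, not merely both pure-type. Your constructed $D_0$ has some pure-type torsion $T_0$, while the given $D$ has pure-type torsion $T$, and nothing forces $T = T_0$ when the pure operators of $D_0$ are chosen arbitrarily. The fix is easy --- either adjust the pure operators of $D_0$ so that $T_{D_0} = T$ (which you can always do by adding a pure-type $3$-tensor), or observe that your proof of $i)$ in fact shows the mixed operators depend only on the \emph{mixed components} of the torsion --- but as written the invocation of $i)$ does not go through. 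In any case, once you notice the displayed identity above holds for general $D$, the entire detour becomes unnecessary.
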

\begin{proof}
Given $e_1,e_2,e_3 \in \Gamma(E)$, we have
\begin{equation}\label{eq:TDmixed}
T(e_1^-,e_2^+,e_3^+) = (D_{e_1^-}e_2^+ - [e_1^-,e_2^+],e_3^+),
\end{equation}
and therefore $ii)$ follows. To prove $i)$, we substract from \eqref{eq:TDmixed} the analogue expression for $T_{D'}(e_1^-,e_2^+,e_3^+)$.
\end{proof}

The second part of the previous lemma provides a weak analogue of the Koszul formula in pseudo-Riemannian geometry. The canonical mixed-type operators \eqref{eq:mixedcanonical} were first 
considered 
in \cite{H3} in the context of exact Courant algebroids, and used in \cite{G3} to construct a generalized geometric analogue of the Bismut connection in hermitian geometry. In our work, the main emphasis is on the space of torsion-free $V_+$-compatible generalized connections, which we denote
$$
\cD^0(V_+) = \cD(V_+) \cap \cD^0.
$$

\begin{proposition}\label{prop:existence}
Let $V_+$ be a generalized metric on a Courant algebroid $E$. Then, there exists a torsion-free generalized connection on $E$ which is compatible with $V_+$.
\end{proposition}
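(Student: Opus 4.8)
The plan is to start from an arbitrary $V_+$-compatible generalized connection and cancel its torsion by a correction term valued in the model vector space \eqref{eq:vectorspDV+}. Such a connection exists: apply \eqref{eq:generalizedconnexp} to any orthogonal connection on $(E,\la\cdot,\cdot\ra)$ preserving the splitting $E = V_+ \oplus V_-$ (for instance the direct sum of metric connections on the pseudo-Riemannian bundles $(V_\pm,\la\cdot,\cdot\ra|_{V_\pm})$), obtaining $D_0 \in \cD(V_+)$ with torsion $T_0 \defeq T_{D_0} \in \Gamma(\Lambda^3 E^*)$. Recall from \eqref{eq:torsionG}, or from the computation behind \eqref{eq:cyclic}, that for $D' = D_0 + \chi$ with $\chi$ in \eqref{eq:vectorspDV+} one has $T_{D'} = T_0 + c.p.\,\sigma_\chi$, where $\sigma_\chi(e_1,e_2,e_3) = \la \chi_{e_1}e_2,e_3 \ra$ and $c.p.$ denotes total skew-symmetrization. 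Thus it suffices to prove that the algebraic (i.e.\ $C^\infty(M)$-linear, fibrewise) map
\[
\Gamma(E^* \otimes \mathfrak{o}(V_+)) \oplus \Gamma(E^* \otimes \mathfrak{o}(V_-)) \longrightarrow \Gamma(\Lambda^3 E^*), \qquad \chi \longmapsto c.p.\,\sigma_\chi,
\]
is surjective; choosing then $\chi$ with $c.p.\,\sigma_\chi = -T_0$ yields a torsion-free $D \defeq D_0 + \chi \in \cD(V_+)$.

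Being fibrewise, surjectivity reduces to linear algebra at a point, and I would verify it summand by summand using
\[
\Lambda^3 E^* = \Lambda^3 V_+^* \oplus (\Lambda^2 V_+^* \otimes V_-^*) \oplus (V_+^* \otimes \Lambda^2 V_-^*) \oplus \Lambda^3 V_-^*.
\]
For the pure summands $\Lambda^3 V_\pm^*$ (automatically zero unless $\rk V_\pm \geq 3$): given $\omega \in \Lambda^3 V_\pm^*$, the correction $\chi_e = $ (the skew endomorphism of $V_\pm$ associated via $\la\cdot,\cdot\ra|_{V_\pm}$ to the $2$-form $\tfrac13 \iota_{e^\pm}\omega$), extended by zero on $V_\mp$, lies in $E^* \otimes \mathfrak{o}(V_\pm)$ and satisfies $c.p.\,\sigma_\chi = \omega$, since $\sigma_\chi$ is already totally antisymmetric. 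For the mixed summand $\Lambda^2 V_+^* \otimes V_-^*$: given $\tau$ of that type, take $\chi_e \in \mathfrak{o}(V_+)$ to be the skew endomorphism of $V_+$ associated to the $2$-form $\iota_{e^-}\tau \in \Lambda^2 V_+^*$, extended by zero on $V_-$; then $\sigma_\chi(e_1,e_2,e_3) = \tau(e_1^-, e_2^+, e_3^+)$ is of ``index type'' $(-,+,+)$, so $c.p.\,\sigma_\chi$ is supported on the same summand, and a short check (two of the three cyclic terms vanish for index-type reasons) gives $c.p.\,\sigma_\chi = \tau$. The summand $V_+^* \otimes \Lambda^2 V_-^*$ is treated symmetrically with a correction in $E^* \otimes \mathfrak{o}(V_-)$.

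With surjectivity in hand, pick $\chi$ mapping to $-T_0$; then $D = D_0 + \chi$ is $V_+$-compatible (sum of a $V_+$-compatible connection and a correction valued in $\mathfrak{o}(V_+)\oplus\mathfrak{o}(V_-)$) and has $T_D = T_0 - T_0 = 0$. The step I expect to carry the only real content is the surjectivity claim — specifically, checking that corrections valued in $\mathfrak{o}(V_+)\oplus\mathfrak{o}(V_-)$ (rather than all of $\mathfrak{o}(E)$) already suffice to produce every $3$-form; the mechanism is that placing the ``minority'' leg of a mixed component in the $E^*$-slot of $\chi$ reduces it to a case where the correction is effectively a full orthogonal one. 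An alternative, more in the spirit of the Koszul formula, is to build $D$ directly: by part $ii)$ of Lemma \ref{lem:mixedfixed} a torsion-free $D \in \cD(V_+)$ must have its mixed-type operators $D^\pm_\mp$ equal to the canonical ones \eqref{eq:mixedcanonical}, with that choice the mixed-type components of $T_D$ vanish automatically, and one is left to choose pure-type operators $D^\pm_\pm$ killing $T_D|_{\Lambda^3 V_\pm^*}$ — which is the classical fact that the totally skew part of the torsion of a metric connection can be prescribed.
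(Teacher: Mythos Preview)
Your proof is correct. The alternative route you sketch at the very end is in fact the one the paper takes, and it is worth seeing why it is cleaner than your main argument. The paper does not start from an arbitrary $V_+$-compatible connection; instead it \emph{imposes} the canonical mixed-type operators \eqref{eq:mixedcanonical} from the outset and uses arbitrary metric connections $\nabla^\pm$ on $V_\pm$ for the pure-type part. By Lemma \ref{lem:mixedfixed} (ii), this choice forces the torsion to be pure-type, i.e.\ to live in $\Lambda^3 V_+^* \oplus \Lambda^3 V_-^*$. Then the single correction $D^0 = D - \tfrac{1}{3}T_D$ kills the torsion and is automatically $V_+$-compatible, with no case-by-case surjectivity check needed.

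Your main argument, by contrast, begins with a $V_+$-compatible connection whose torsion may have mixed components (as indeed happens for the connection coming from $\nabla^+ \oplus \nabla^-$), and you then must verify that corrections valued in $\mathfrak{o}(V_+) \oplus \mathfrak{o}(V_-)$ suffice to hit every summand of $\Lambda^3 E^*$. Your handling of the mixed summands --- placing the minority index in the $E^*$-slot of $\chi$ --- is exactly right and is the point with content. What you gain is a proof that does not rely on the Koszul-type formula \eqref{eq:mixedcanonical} at all; what you pay is the explicit decomposition of $\Lambda^3 E^*$ and the four summand-by-summand checks. The paper's route trades that linear algebra for one invocation of Lemma \ref{lem:mixedfixed}.
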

\begin{proof}
We construct first a $V_+$-compatible generalized connection $D$ on $E$ which has torsion of pure-type. For this, we define the mixed-type operators $D^\pm_\mp$ by \eqref{eq:mixedcanonical}. To define the \emph{pure-type operators}, we choose arbitrary metric connections $\nabla^+$ on $V_+$ and $\nabla^-$ on $V_-$, and set
$$
D_{e_1^\pm}e_2^\pm := \nabla^\pm_{\pi(e_1^\pm)}e_2^\pm.
$$
Finally, we define our torsion-free generalized connections $D^0$ by `killing the torsion', that is (see \eqref{eq:cyclic})
$$
D^0 = D - \frac{1}{3}T_D,
$$
where we use the metric $\la\cdot,\cdot\ra$ to regard
$$
T_{D} \in \Gamma(V_+^* \otimes \mathfrak{o}(V_+)) \oplus \Gamma(V_-^* \otimes \mathfrak{o}(V_-)).
$$
Crucially, the pure-type condition on $T_D$ implies that $D^0$ is $V_+$-compatible.
\end{proof}

Given a generalized metric, the torsion-free condition does not determine a compatible generalized connection uniquely. Unlike in pseudo-Riemannian geometry, the \emph{many Levi-Civita generalized connections} form an affine space, modelled on the pure-type mixed symmetric $3$-tensors
$$
\Sigma^+ \oplus \Sigma^-
$$
where
$$
\Sigma^\pm =  \Gamma(V_\pm^{\otimes 3}) \cap \Sigma,
$$
and $\Sigma$ is as in \eqref{eq:sigma}. There are canonical splittings
\begin{equation}\label{eq:splittingyoungpure}
\Sigma^\pm = \Sigma_0^\pm \oplus \Gamma(V_\pm),
\end{equation}
where the first summand corresponds to `trace-free' elements, in analogy with \eqref{eq:splittingyoung}. To see this--for $V_+$, say--let $r_+$ denote the rank of $V_+$, let $\{e_i^+\}$ be an orthogonal local frame for $V_+$ and let $\{\tilde e_i^+\} \subset \Gamma(V_+)$ be the `dual frame', defined so that $\la e_i^+, \tilde e_j^+ \ra = \delta_{ij}$. Then, on $V_+$ the splitting \eqref{eq:splittingyoungpure} corresponds to the following direct sum decomposition
\begin{equation}\label{eq:decompositionchipm}
\chi^+ = \chi^+_0 + \chi_+^{e^+}
\end{equation}
for a general element $\chi^+ \in \Gamma(V_+^* \otimes \mathfrak{o}(V_+))$, where $\chi_0^+$ is such that
$$
\sum_{i=1}^{r_+} (\chi_0^+)_{e_i^+}\tilde e_i^+ = 0, 
$$
and
$$
(\chi^{e^+}_+)_{e_1^+}e_2^+ = \la e_1^+,e_2^+ \ra e^+ - \la e^+,e_2^+ \ra e_1^+
$$
with 
$$
e^+ = \frac{1}{r_+ -1}\sum_{i=1}^{r_+} \chi_{e_i^+}\tilde e_i^+.
$$

\subsection{Canonical Dirac operators}\label{sec:ggdirac}

By Lemma \ref{lem:mixedfixed}, the freedom in the construction of a torsion-free generalized connection compatible with $V_+$ corresponds to the choice of pure-type operators
\begin{equation}\label{eq:LCpure}
D^+_+ \colon \Gamma(V_+) \to \Gamma(V_+^* \otimes V_+), \qquad D^-_- \colon \Gamma(V_-) \to \Gamma(V_-^* \otimes V_-).
\end{equation}
Using spinorial geometry, in this section we construct a pair of Dirac-type operators which are independent of choices, once we fix a divergence operator $div$ on the Courant algebroid $E$ (see Lemma \ref{lem:weylfixed}). 

Let $Cl(V_+)$ and $Cl(V_-)$ denote the bundles of Clifford algebras of $V_+$ and $V_-$, respectively. We assume that $Cl(V_\pm)$ admit irreducible Clifford modules $S_\pm$ such that $\Gamma(\End S_\pm) \cong \Gamma(Cl(V_\pm))$, that we fix in the sequel. Furthermore, we assume that the line bundles $(\det S_\pm^*)^{\frac{1}{r_{S_\pm}}}$ exist, and denote
\begin{equation}\label{eq:spinortwistedpm}
\mathcal{S}_\pm = S_\pm \otimes (\det S_\pm^*)^{\frac{1}{r_{S_\pm}}},
\end{equation}
where $r_{S_\pm}$ denotes the rank of $S_\pm$.

With these assumptions, similarly as in Section \ref{subsec:genoper}, a generalized connection $D \in \cD^0(V_+)$, induces canonically a pair of Dirac-type operators
\begin{align*}
\slashed D^+ & \colon \Gamma(\mathcal{S}_+) \to \Gamma(\mathcal{S}_+), \qquad \slashed D^- \colon \Gamma(\mathcal{S}_-) \to \Gamma(\mathcal{S}_-),
\end{align*}
given explicitly by (e.g. for $\alpha \in \Gamma(\mathcal{S}_+)$)
$$
\slashed D^+ \alpha = \frac{1}{2}\sum_{i=1}^{r_+} \tilde e_i^+ \cdot D_{e_i^+} \alpha.
$$

In the following lemma we prove that these natural operators are uniquely determined by the generalized metric, once we Weyl gauge fix the generalized connections by a choice of divergence operator on $E$ (see Definition \ref{def:divergenceop}). This provides a weak analogue of the Fundamental Lemma of pseudo-Riemannian geometry, that we shall use in Section \ref{subsec:Ricci} to study the Ricci tensor in generalized geometry. Let $div \colon \Gamma(E) \to C^\infty(M)$ be a divergence operator on $E$, and consider the space of $V_+$-compatible generalized connections with vanishing torsion and fixed divergence
\begin{equation}\label{eq:DVdiv}
\cD(V_+,div) = \{D \in \cD(V_+) \; | \; T_D = 0, \; div_D = div\}.
\end{equation}
The existence of an element in $\cD(V_+,div)$ follows by Proposition \ref{prop:existence}, along the lines of the proof of Lemma \ref{lem:weylfixed}. By this lemma, $\cD(V_+,div)$ is an affine space modelled on 
$$
\Sigma^+_0 \oplus \Sigma^-_0.
$$

\begin{lemma}\label{lem:dDpm}
The Dirac operators $\slashed D^+$ and $\slashed D^-$ are independent of the choice of $D \in \cD(V_+,div)$.
\end{lemma}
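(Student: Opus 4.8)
The plan is to reduce the statement to a computation of how the Dirac operators $\slashed D^+$ and $\slashed D^-$ vary when we move $D$ within the affine space $\cD(V_+,div)$, which by Lemma \ref{lem:weylfixed} (applied in the pure-type setting) is modelled on $\Sigma^+_0 \oplus \Sigma^-_0$. Since the mixed-type operators $D^\pm_\mp$ are already fixed by Lemma \ref{lem:mixedfixed}(i) and only the pure-type operators \eqref{eq:LCpure} vary, and since $\slashed D^+$ depends only on $D^+_+$ (it involves only $\tilde e_i^+ \cdot D_{e_i^+}\alpha$ for $\alpha \in \Gamma(\mathcal{S}_+)$), it suffices to show that $\slashed D^+$ is unchanged under a deformation $D^+_+ \mapsto D^+_+ + \chi^+$ with $\chi^+ \in \Sigma^+_0$, i.e. $\chi^+ \in \Gamma(V_+^* \otimes \mathfrak{o}(V_+))$ satisfying both the cyclic (torsion-preserving) condition $c.p.\,\la \chi^+_{e_1^+}e_2^+,e_3^+\ra = 0$ and the trace-free condition $\sum_i (\chi^+)_{e_i^+}\tilde e_i^+ = 0$; the argument for $\slashed D^-$ is identical.

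First I would fix a spin $E$-connection on $\mathcal{S}_+$ compatible with $D^+_+$ and write down the variation formula for the associated Dirac operator under $D^+_+ \mapsto D^+_+ + \chi^+$. This is the pure-type analogue of the computation in the proof of Proposition \ref{prop:weylfixed}: one gets, for $\alpha \in \Gamma(\mathcal{S}_+)$, an expression of the schematic form
\begin{equation*}
\slashed{D'}^+ \alpha = \slashed D^+ \alpha - \tfrac{1}{4}\, c.p.(\sigma^+)\cdot \alpha - \tfrac{1}{4}\Big(\textstyle\sum_{i=1}^{r_+} \chi^+_{\tilde e_i^+} e_i^+\Big)\cdot \alpha,
\end{equation*}
where $\sigma^+(e_1^+,e_2^+,e_3^+) = \la \chi^+_{e_1^+}e_2^+,e_3^+\ra$ and the Clifford multiplication is that of $Cl(V_+)$ on $\mathcal{S}_+$. (One must be slightly careful: the relevant trace term is $\sum_i \chi^+_{\tilde e_i^+}e_i^+$, and by the elementary identity used in the proofs of Lemma \ref{lem:weylfixed} and Proposition \ref{prop:weylfixed}, for $\chi^+ = \chi^+_0 + \chi^{e^+}_+$ one has $\sum_i \chi^+_{\tilde e_i^+}e_i^+ = (r_+-1)e^+$, so this term is controlled by the same $e^+$ that appears in the trace condition.) Then I would invoke the two defining constraints of $\Sigma^+_0$: the torsion-preserving condition forces $c.p.(\sigma^+) = 0$, and the trace-free condition $\sum_i \chi^+_{e_i^+}\tilde e_i^+ = 0$ forces $e^+ = 0$, hence $\sum_i \chi^+_{\tilde e_i^+}e_i^+ = 0$ as well. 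Both correction terms vanish, so $\slashed{D'}^+ = \slashed D^+$.

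The main obstacle I anticipate is bookkeeping rather than conceptual: carefully deriving the variation formula above, being consistent about which contraction of $\chi^+$ appears (the "$\tilde e_i^+$ then $e_i^+$" trace versus the "$e_i^+$ then $\tilde e_i^+$" trace, which differ by the symmetry type of $\chi^+$), and tracking the half-density / $(\det S_\pm^*)^{1/r_{S_\pm}}$ twist so that the spin connection ambiguity genuinely drops out on $\mathcal{S}_\pm$ (as in the discussion following Theorem \ref{th:genoper}). One should also check that the deformation by $\chi^+ \in \Gamma(V_+^*\otimes\mathfrak{o}(V_+))$ indeed does not affect the $\mathcal{S}_+$-valued Dirac operator through the $D^-_-$ or mixed blocks — which is automatic since $\mathcal{S}_+$ is a $Cl(V_+)$-module only and the formula for $\slashed D^+$ uses only $D_{e_i^+}$ acting on $\Gamma(V_+)$, hence only $D^+_+$. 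Once the variation formula is in hand, the conclusion is immediate from the definition \eqref{eq:sigma0} of $\Sigma_0$, transcribed to the $V_+$ and $V_-$ summands via \eqref{eq:splittingyoungpure} and \eqref{eq:decompositionchipm}.
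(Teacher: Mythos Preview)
Your proposal is correct and follows essentially the same argument as the paper. The paper's proof takes $D' = D + \chi$ with $\chi = \chi^+ + \chi^- \in \Sigma^+ \oplus \Sigma^-$, first uses the divergence constraint to force $e^\pm = 0$ in the decomposition $\chi^\pm = \chi^\pm_0 + \chi_\pm^{e^\pm}$, and then applies the pure-type analogue of \eqref{eq:diracdivergencevariation} to obtain $\slashed D'^\pm = \slashed D^\pm - \tfrac{1}{4}\,c.p.(\sigma^\pm) = \slashed D^\pm$; you instead start directly from $\chi^\pm \in \Sigma^\pm_0$ (which is the stated model for $\cD(V_+,div)$) and keep both the $c.p.(\sigma^\pm)$ and trace terms in the variation formula, killing them by the torsion-preserving and trace-free conditions respectively --- a cosmetic reordering of the same steps.
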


\begin{proof}
Let $D \in \cD(V_+,div)$ and consider a torsion-free $V_+$-compatible generalized connection $D' = D + \chi \in \cD^0(V_+)$ with
$$
\chi = \chi^+ + \chi^- \in \Sigma^+ \oplus \Sigma^-.
$$
Using the decomposition in \eqref{eq:decompositionchipm} there exists $e^\pm \in \Gamma(V_\pm)$ such that
$$
\chi^\pm = \chi^\pm_0 + \chi_\pm^{e^\pm}.
$$
Then, we obtain
$$
div_{D'} = div - (r_+ - 1)\la e^+,\cdot \ra - (r_- - 1) \la e^-, \cdot \ra
$$
and therefore $div_{D'} = div$ implies $e^+ = e^- = 0$. Define $\sigma \in \Gamma(\Lambda^3 E^*)$ by
$$
\sigma(e_1,e_2,e_3) = \la \chi_{e_1}e_2,e_3\ra,
$$
and note that the total skew-symmetrization $c.p. (\sigma)$ vanishes, since $D'$ and $D$ are torsion-free. Decomposing $\sigma = \sigma^+ + \sigma^-$ in pure-types, the result follows from
$$
\slashed D'^\pm = \slashed D^\pm - \frac{1}{4} c.p.(\sigma^\pm) = \slashed D^\pm.
$$

\end{proof}

\subsection{The case of exact Courant algebroids}\label{subsec:example}
In this section we discuss in detail our general construction in the special case of Example \ref{ex:divergence}. Let $E$ be an exact Courant algebroid over a manifold $M$ of dimension $n$ with \v Severa class $[H] \in H^3(M,\RR)$, given by a short exact sequence
\begin{equation}\label{eq:exactseqE}
0 \to T^* \to E \to T \to 0.
\end{equation}
Consider a generalized metric
$$
E = V_+ \oplus V_-
$$
of signature $(n,0)$. Then, $V_+$ induces an isotropic splitting of \eqref{eq:exactseqE} and an isomorphism of $E$ with the Courant algebroid in Example \ref{ex:divergence}, for a suitable choice of closed $3$-form $H \in [H]$. In this splitting, the generalized metric takes the simple form \cite{G3} 
$$
V_\pm = \{X \pm g(X): X \in T\},
$$
where $g$ is the Riemannian metric induced by the isomorphism $\pi_+ \colon V_+ \to T$.

Define connections on $T$ with skew torsion, compatible with the metric $g$,
given by
\begin{equation}\label{eq:nablapm}
    \nabla^\pm = \nabla^g \pm \frac{1}{2}g^{-1}H,
\end{equation}
where $\nabla^g$ denotes the Levi-Civita connection of the metric $g$
on $M$. As proved in \cite{G3}, one has the following formula for the canonical mixed-type operators \eqref{eq:mixedcanonical}: 
\begin{equation}\label{eq:bismutmix}
\begin{split}
[X^-,Y^+]^+ &= \frac{1}{2}\(\nabla^+_XY\)^+,\\
[X^+,Y^-]^- &= \frac{1}{2}\(\nabla^-_XY\)^-,
\end{split}
\end{equation}
where $X, Y \in \Gamma(T)$. Following the proof of Proposition \ref{prop:existence}, we use the isomorphisms $\pi_\pm \colon V_\pm \to T$ to regard $\nabla^\pm$ as metric connections on $V_\pm$, and define a $V_+$-compatible connection with pure-type torsion. By doing so, we obtain the \emph{Gualtieri-Bismut connection} with pure-type operators
\begin{equation}\label{eq:bismutpure}
\begin{split}
D^B_{X^+}Y^+ & = \frac{1}{2}\(\nabla^+_XY\)^+,\\
D^B_{X^-}Y^- & = \frac{1}{2}\(\nabla^-_XY\)^-,
\end{split}
\end{equation}
and torsion $T_{D^B} = \pi_+^*H + \pi_-^*H$. The torsion-free generalized connection $D^0 \in \cD^0(V_+)$ constructed from $D^B$ by
$$
D^0 = D^B - \frac{1}{3}T_{D^B}
$$
has pure-type operators
\begin{equation}\label{eq:LCpureex}
\begin{split}
D^0_{X^+}Y^+ & = \frac{1}{2}\(\nabla^{+1/3}_XY\)^+,\\
D^0_{X^-}Y^- & = \frac{1}{2}\(\nabla^{-1/3}_XY\)^-,
\end{split}
\end{equation}
where $\nabla^{\pm1/3}$ denote the metric connection with skew-symmetric torsion
\begin{equation}\label{eq:nabla+3}
    \nabla^{\pm1/3} = \nabla^g \pm \frac{1}{6}g^{-1}H.
\end{equation}

Consider now the generalized connection $D$ induced by $\nabla^g \oplus \nabla^{g*}$, as in Example \ref{ex:divergence}. It is easy to see that $D^B$ and $D^0$ differ from $D$ by totally skew-symmetric elements in $\Gamma(E^* \otimes \mathfrak{o}(E))$ (see \cite{GF}) and therefore
\begin{equation}\label{eq:divergenceex}
div_{D^B}(e) = div_{D^0}(e) = \mu_g^{-1}L_{\pi(e)}\mu_g,
\end{equation}
where $\mu_g \in \Gamma(|\det T^*|)$ is the Riemannian $1$-density. Denote $div = div_{D^0}$ and consider
\begin{equation}\label{eq:divergencevarphiex}
div^\varphi = div - \la \varphi, \cdot \ra,
\end{equation}
for a $1$-form $\varphi \in \Gamma(T^*)$. Then, we can construct an element $D^\varphi \in \cD(V_+,div^\varphi)$ 
by
\begin{equation}\label{eq:LCvarphi}
D^\varphi = D^0 + \frac{1}{n-1}\Big{(}\chi_+^{\varphi^+} + \chi_-^{\varphi^-}\Big{)},
\end{equation}
with pure-type operators
\begin{equation}\label{eq:LCvarphipure}
\begin{split}
D^\varphi_{X^+}Y^+ & = \frac{1}{2}\(\nabla^{+1/3}_XY + \frac{1}{2(n-1)}(g(X,Y)g^{-1}\varphi - \varphi(Y)X)\)^+,\\
D^\varphi_{X^-}Y^- & = \frac{1}{2}\(\nabla^{-1/3}_XY + \frac{1}{2(n-1)}(g(X,Y)g^{-1}\varphi - \varphi(Y)X)\)^-.
\end{split}
\end{equation}
Assuming that $M$ is a spin manifold, the canonical Dirac operators $\slashed D^{\varphi+}$ and $\slashed D^{\varphi-}$ associated to $(V_+,\slashed d)$ in Lemma \ref{lem:dDpm} exist, and are given by
\begin{equation}\label{eq:LCvarphiDirac}
\begin{split}
\slashed D^{\varphi\pm} & = \slashed \nabla^{\pm 1/3} + \frac{1}{4}\varphi \cdot.
\end{split}
\end{equation}

Consider now the spinor bundle $S = \Lambda^* T^*$ and the twisted spinor bundle
$$
\mathcal{S} = \Lambda^* T^* \otimes |\det T|^{1/2},
$$
and identify $\mathcal{S} \cong \mathbb{S}$ using $\mu^{\frac{1}{2}}_g$ (see Example \ref{example:dirac}). 
A calculation shows that
$$
\slashed D^B = d - H\wedge - \frac{1}{4}T_{D^B},
$$
and therefore the generalized connection $D^\varphi$ defined by \eqref{eq:LCvarphipure} satisfies 
$$
\slashed D^\varphi = d - H \wedge + \frac{1}{4}\varphi \wedge,
$$
which in particular implies $\slashed D^0 = \slashed d_0$. Note that, by Example \ref{example:dirac}, $\slashed D^\varphi$ is a generating operator if and only if $d \varphi = 0$.


\section{The Ricci tensor}\label{sec:Ricci}

\subsection{Curvature and the First Bianchi Identity}\label{sec:curvature}

Let $E$ be Courant algebroid over a smooth manifold $M$. We fix a generalized metric $E = V_+ \oplus V_-$ of arbitrary signature. All the generalized connections $D$ on $E$ considered in this section are compatible with $V_+$. Given such a generalized connection $D$ there are two curvature operators
\begin{align*}
R_D^\pm &\in \Gamma(V_\pm^* \otimes V_\mp^* \otimes \mathfrak{o}(V_\pm)),
\end{align*}
defined by
\begin{equation}\label{eq:curvature}
\begin{split}
R_D^+(e_1^+,e_2^-)e_3^+ = D_{e_1^+}D_{e_2^-}e_3^+ - D_{e_2^-}D_{e_1^+} e_3^+ - D_{[e_1^+,e_2^-]}e_3^+,\\
R_D^-(e_1^-,e_2^+)e_3^- = D_{e_1^-}D_{e_2^+}e_3^- - D_{e_2^+}D_{e_1^-} e_3^- - D_{[e_1^-,e_2^+]}e_3^-.
\end{split}
\end{equation}

The curvatures of a torsion-free generalized connection are not an invariant of the generalized metric $V_+$, but rather depend on $D$ via the choice of pure-type operators \eqref{eq:LCpure}. Our next objective is to define curvature quantities which only depend on the generalized metric $V_+$, after Weyl gauge fixing. Before we address this question in Section \ref{subsec:Ricci}, we prove an algebraic property of the curvatures of a torsion-free generalized connection, that we will use later in Lemma \ref{lem:Ricci}. 

\begin{proposition}[First Bianchi identity]\label{propo:bianchi}
Let $D$ be a torsion-free generalized connection compatible with $V_+$. Then, for any $e^\pm, e_1^\pm, e_2^\pm, e_3^\pm \in \Gamma(V_\pm)$we have
\begin{equation}\label{eq:bianchi}
c.p._{123} \la R_D^\pm(e_1^\pm,e^\mp)e_2^\pm,e_3^\pm\ra  = 0.
\end{equation}
\end{proposition}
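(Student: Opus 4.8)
The plan is to reduce the identity to a statement about the Courant bracket by using the torsion-free condition to rewrite the double covariant derivatives. The key observation is that the mixed-type operators $D^\pm_\mp$ appearing in \eqref{eq:curvature} are canonical: since $D$ is torsion-free (hence has pure-type torsion $T_D = 0$), Lemma \ref{lem:mixedfixed}$(ii)$ gives $D_{e_1^-}e_2^+ = [e_1^-,e_2^+]^+$ and $D_{e_1^+}e_2^- = [e_1^+,e_2^-]^-$. First I would expand, say, $\la R_D^+(e_1^+,e^-)e_2^+,e_3^+\ra$ by inserting these formulas wherever a mixed derivative occurs, and use metric-compatibility of $D$ to move $D$ off $e_3^+$ when convenient. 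The torsion-free condition in the pure-type directions, $T_D(e_1^+,e_2^+,e_3^+) = 0$, reads $\la D_{e_1^+}e_2^+ - D_{e_2^+}e_1^+ - [e_1^+,e_2^+]^+, e_3^+\ra + \la D_{e_3^+}e_1^+, e_2^+\ra = 0$; after total skew-symmetrization over $123$ this says $c.p._{123}\la D_{e_1^+}e_2^+, e_3^+\ra = \tfrac12 c.p._{123}\la [e_1^+,e_2^+]^+, e_3^+\ra$, which I will use to collapse the terms that survive the cyclic sum.

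The core of the argument is then bookkeeping: after cyclically summing over $123$, the terms of the form $D_{e^-}D_{e_i^+}e_j^+$ reorganize, via $D_{e^-}(\cdot)^+ = [e^-,\cdot]^+$ (on $V_+$-valued sections) and Leibniz, into expressions involving $[e^-,[e_i^+,e_j^+]^+]^+$ and $D_{[e^-,e_i^+]^+}e_j^+$ type terms; the connection terms $D_{[e_i^+,e^-]}e_j^+$ split into a pure-type piece $D_{[e_i^+,e^-]^+}e_j^+$ and a mixed piece $D_{[e_i^+,e^-]^-}e_j^+ = [[e_i^+,e^-]^-,e_j^+]^+$. Collecting everything, the purely $D$-dependent contributions cancel against each other modulo the pure-type torsion-free relation above, and what remains is a sum of iterated Courant brackets of the form $c.p._{123}\la [e^-,[e_1^+,e_2^+]]^+ - [[e^-,e_1^+],e_2^+]^+ - [e_1^+,[e^-,e_2^+]]^+, e_3^+\ra$ together with projection-correction terms. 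The Jacobi identity (C1) kills the first cluster, and the projection corrections — terms where a bracket of two $V_+$ elements lands in $V_-$ and is then re-differentiated — cancel in pairs after the cyclic sum, using (C4) to trade inner products of brackets. The $V_-$ case is identical with $+$ and $-$ interchanged.

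The main obstacle I anticipate is precisely controlling these projection-correction terms: because $V_\pm$ are not subalgebroids, $[e_1^+,e_2^+]$ generally has a nonzero $V_-$-component, and each time such a component is fed back into a mixed-type operator $D$ one must carefully track signs and which slot it occupies. The cleanest way to handle this is probably to work with an adapted orthogonal frame $\{e_i^+\}$ of $V_+$ and a frame $\{e_a^-\}$ of $V_-$, write $R_D^+$ and the brackets in components, and verify the cancellation index by index; alternatively one can phrase the whole computation in terms of the Dorfman bracket and note that \eqref{eq:bianchi} is the $V_+\otimes V_+\otimes V_+$-component (skew in the last three slots, with one $V_-$ plugged into the curvature) of the general first Bianchi identity for torsion-free $E$-connections, whose proof is the standard manipulation of (C1), (C3), (C4) together with the torsion-free relation. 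I would present the frame computation for concreteness, flagging that the only inputs are the Courant axioms, metric-compatibility of $D$, the canonical form \eqref{eq:mixedcanonical} of the mixed operators, and the pure-type torsion-free identity.
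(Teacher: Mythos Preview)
Your approach is essentially the same as the paper's: expand the curvature using the canonical mixed-type operators from Lemma~\ref{lem:mixedfixed}, use metric compatibility and the torsion-free condition, and reduce to the Jacobi identity (C1). The ingredients you list are exactly right.

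Where you overcomplicate matters is the anticipated ``projection-correction terms.'' These do not arise, and the reason is worth internalizing: since every term in the cyclic sum is ultimately paired with $e_3^+ \in V_+$ and $V_+ \perp V_-$, one has $\la X, e_3^+\ra = \la X^+, e_3^+\ra$ for any $X \in \Gamma(E)$. Hence you may work with the full Dorfman bracket throughout, never projecting, and the $V_+$-component is extracted automatically at the end. The paper exploits this by using (C4) and (C5) (the latter giving $[e^-,e_i^+] = -[e_i^+,e^-]$ since $\la e^-, e_i^+\ra = 0$) to rewrite the derivative terms $\pi(e^-)\la\cdot,\cdot\ra$ and $\pi(e_i^+)\la\cdot,\cdot\ra$ directly as brackets, so that after applying $T_D(e_1^+,e_2^+,e_3^+) = 0$ the cyclic sum collapses in one line to
\[
\la [[e^-,e_1^+],e_2^+] + [e_1^+,[e^-,e_2^+]] - [e^-,[e_1^+,e_2^+]],\, e_3^+\ra,
\]
which vanishes by (C1). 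No frame computation is needed, and the $V_-$-component of $[e_1^+,e_2^+]$ never has to be tracked separately.
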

\begin{proof}
We argue for $R^+_D$, as the other case is symmetric. We decompose
$$
c.p._{123} \la R_D^+(e_1^+,e^-)e_2^+,e_3^+\ra = I + J + K
$$
where
\begin{align*}
I & = c.p._{123} \la D_{e_1^+}([e^-,e_2^+]^+),e_3^+\ra,\\
J & = - c.p._{123} \la [e^-,D_{e_1^+} e_2^+],e_3^+\ra,\\
K & = - c.p._{123} \la D_{[e_1^+,e^-]}e_2^+,e_3^+\ra.
\end{align*}
Using that $T_D = 0$ and formula \eqref{eq:torsionG} we have 
\begin{align*}
K & = c.p._{123}  - \la D_{e_2^+}[e_1^+,e^-],e_3^+\ra - \la [[e_1^+,e^-],e_2^+],e_3^+\ra + \la D_{e_3^+}[e_1^+,e^-],e_2^+\ra\\
& = c.p._{123}  \la[e_1^+,e^-],D_{e_2^+} e_3^+ - D_{e_3^+} e_2^+ \ra - \la [[e_1^+,e^-],e_2^+],e_3^+\ra\\
&  - \pi(e_2^+)(\la[e_1^+,e^-],e_3^+\ra) + \pi(e_3^+)(\la [e_1^+,e^-],e_2^+\ra),\\
\end{align*}
where in the last equality we used the compatibility between $D$ and $\la\cdot,\cdot\ra$. Using again this last property, we also have
\begin{align*}
I & = c.p._{123}  \pi(e_1^+)(\la[e^-,e_2^+],e_3^+\ra) - \la[e^-,e_2^+],D_{e_1^+}e_3^+\ra,\\
J & = c.p._{123} - \pi(e^-)(\la[e_1^+,e_2^+],e_3^+\ra) + \la[e^-,e_3^+],D_{e_1^+}e_2^+\ra,
\end{align*}
where in the last equality we used property (C4) of the bracket (see Section \ref{subsec:connection})). From this, using property (C5) of the bracket--which 
implies $[e^-,e_i^+] = - [e_i^+,e^-]$--, we obtain
\begin{align*}
I + J + K & =  - \la [[e_1^+,e^-],e_2^+], e_3^+\ra + \la [[e_2^+,e^-],e_1^+], e_3^+\ra - \la [[e_3^+,e^-],e_1^+], e_2^+\ra\\
& + \pi(e_2^+)(\la [e^3_+,e^-],e_1^+\ra) - \pi(e_1^+)(\la [e^3_+,e^-],e_2^+\ra) - \pi(e_3^+)(\la [e_2^+,e^-],e_1^+\ra)\\
& - \pi(e^-)(T_D(e_1^+,e_2^+,e_3^+)) - \pi(e^-)(\la [e_1^+,e_2^+],e^+_3\ra).
\end{align*}
Finally, using again (C5) we have an equality 
$$
\pi(e_3^+)(\la [e^-,e_2^+],e_1^+\ra) = \la e_3^+, [e_1^+,[e^-,e_2^+]] + [[e^-,e_2^+],e_1^+]\ra,
$$
and from $T_D = 0$ we conclude
$$
I + J + K = \la [[e^-,e_1^+],e_2^+] + [e_1^+,[e^-,e_2^+]] - [e^-,[e_1^+,e_2^+]], e_3^+\ra,
$$
which vanishes by the Jacobi identity (property (C1)).
\end{proof}

The \emph{algebraic Bianchi identity} \eqref{eq:bianchi} for torsion-free generalized connections is a remarkable property, as their construction typically involves standard connections with skew-torsion in the tangent bundle of $M$. Recall that a torsionful connection on a manifold satisfies complicated Bianchi identities which involve first-order derivatives of its torsion (see e.g. \cite[Appendix]{Ferreira2}). 
We illustrate this with the following example.

\begin{example}\label{example:Curvature}
Let $V_+$ be a Riemannian generalized metric on an exact Courant algebroid $E$ over $M$, as considered in Section \ref{subsec:example}. Using the isomorphisms $\pi_\pm \colon V_\pm \to T$ we can identify the curvatures of the Gualtieri-Bismut generalized connection with
$$
R_{D^B}^\pm \cong R_{\nabla^\pm},
$$
where $R_{\nabla^\pm}$ is the (standard) curvature of the metric connection with skew-symmetric torsion $\pm H$ in \eqref{eq:nablapm} (see \cite[p. 4]{FrIv} for an explicit formula). Note that the first Bianchi identity for $R_{\nabla^\pm}$ involves covariant derivatives of the form $\nabla^g H$, as well as quadratic terms in $H$ (see e.g. \cite[Appendix]{Ferreira2}). 

Considering now the torsion-free generalized connection $D^\varphi \in \cD(V_+,\slashed d)$ defined by \eqref{eq:LCvarphi}, its curvature is given by \cite{GF} (e.g. for $R_{D^\varphi}^+$)
\begin{align*}
R_{D^\varphi}^+(X^+,Y^-)Z^+ & = \frac{1}{4}\Big{(}R^{1,3}(X,Y)Z + \frac{1}{4(n-1)}\(g(X,Z)\nabla^{+}_Y\varphi - i_Z\(\nabla^+_Y\varphi\)X\)\Big{)}^+
\end{align*}
where
\begin{align*}
R^{1,3}(X,Y)Z & = R^g(X,Y)Z + g^{-1}\Bigg{(}\frac{1}{2}(\nabla^g_XH)(Y,Z,\cdot) - \frac{1}{6}(\nabla^g_YH)(X,Z,\cdot)\\
& + \frac{1}{12}H(X,g^{-1}H(Y,Z,\cdot),\cdot) - \frac{1}{12}H(Y,g^{-1}H(X,Z,\cdot),\cdot)\\
& - \frac{1}{6}H(Z,g^{-1}H(X,Y,\cdot),\cdot)\Bigg{)}.
\end{align*}
The tensor $R^{1,3}$ is a hybrid of the \emph{second covariant derivatives} for $\nabla^+,\nabla^{1/3}$, and $\nabla^-$ with the remarkable property that satisfies the algebraic Bianchi identity \eqref{eq:bianchi}. More invariantly, it can be written as
\begin{equation}\label{eq:R13}
\begin{split}
R^{1/3}(X,Y)Z & = \nabla^{1/3}_X\nabla^+_YZ - \nabla^+_Y\nabla^{1/3}_XZ + \nabla^{1/3}_{\nabla^+_YX}Z - \nabla^+_{\nabla^-_XY}Z.
\end{split}
\end{equation}
\end{example}

\subsection{Definition of the Ricci tensor}\label{subsec:Ricci}

In order to define the Ricci tensor of a generalized metric, we introduce next the definition of Ricci tensor for a metric-compatible generalized connection, due to Gualtieri (see e.g. \cite{GF,Streets}).

\begin{definition}\label{def:RicciD}
The \emph{Ricci tensors} $Ric_D^\pm \in \Gamma(V_\mp^* \otimes V_\pm^*)$ of a generalized connection compatible with $V_+$ are defined by
\begin{equation}\label{eq:Ricci}
Ric_D^\pm(e^\mp,e^\pm) = \tr (d^\pm \to R_D^\pm(d^\pm,e^\mp)e^\pm),
\end{equation}
for $e^\pm \in \Gamma(V_\pm)$.
\end{definition}

Our next result investigates the variation of the Ricci tensors when we deform a torsion-free generalized connection in $\cD(V_+)$, while preserving the pure-type condition of the torsion and the divergence (see Definition \ref{def:divergenceop}). 

\begin{proposition}\label{propo:Riccitorsion}
Assume that $M$ is positive-dimensional. Let $(V_+,div)$ be a pair given by a divergence operator and a generalized metric on a Courant algebroid $E$. Let $D, D' \in \cD(V_+)$ be generalized connections with divergence $div_D = div = div_{D'}$. If $T_D = 0$ and $T_{D'}$ is of pure-type then $Ric^\pm_D = Ric^\pm_{D'}$. In particular, $Ric^\pm_D = Ric^\pm_{D'}$ for any pair $D,D' \in \cD(V_+,div)$ (see \eqref{eq:DVdiv}).
\end{proposition}
\begin{proof}
Setting $\chi = D' - D$, the condition $div_D = div_{D'}$ combined with $T_{D'}$ being of pure-type imply that
$$
\chi = \chi^+ + \chi^-,
$$
where (see \eqref{eq:splittingyoungpure})
$$
\chi^\pm = \chi^\pm_0 + \sigma^\pm \in \Sigma_0^\pm \oplus \Lambda^3 V_\pm.
$$
Given $e^\pm \in \Gamma(V_\pm)$ we calculate
\begin{equation}\label{eq:RicDD}
\begin{split}
Ric_{D'}^+(e^-,e^+) & = \sum_{i=1}^{r_+} \la \tilde e_i^+, R_{D'}^+(e_i^+,e^-)e^+\ra\\
& = Ric_{D}^+(e^-,e^+) + \sum_{i=1}^{r_+} \la \tilde e_i^+, \chi^+_{e_i^+}(D_{e^-}e^+) - D_{e^-} (\chi^+_{e_i^+}e^+) - \chi^+_{[e_i^+,e^-]^+}e^+\ra\\
& = Ric_{D}^+(e^-,e^+) + \sum_{i=1}^{r_+} \Big{(} - \la \chi^+_{e_i^+}\tilde e_i^+,D_{e^-}e^+\ra + \la \chi^+_{e_i^+}  e^+,D_{e^-}\tilde e_i^+\ra\\
& + \iota_{\pi(e_-)} d\la \chi^+_{e_i^+}\tilde e_i^+,e^+\ra - \la \chi^+_{[e^-,e_i^+]^+} \tilde e_i^+,e^+\ra \Big{)}\\
& = Ric_{D}^+(e^-,e^+) + \sum_{i=1}^{r_+} \Big{(} \la \chi^+_{e_i^+} e^+,[e^-,\tilde e_i^+]\ra  - \la \chi^+_{[e^-,e_i^+]^+} \tilde e_i^+,e^+\ra \Big{)},
\\
& = Ric_{D}^+(e^-,e^+) + \sum_{i=1}^{r_+} \Big{(} \la (\chi^+_0)_{e_i^+} e^+,[e^-,\tilde e_i^+]^+\ra  - \la (\chi^+_0)_{[e^-,e_i^+]^+} \tilde e_i^+,e^+\ra \Big{)}
\end{split}
\end{equation}
where have used $\sum_{i=1}^{r_+} \chi^+_{e_i^+}\tilde e_i^+ = 0$, Lemma \ref{lem:mixedfixed}, and that $\sigma^\pm \in \Lambda^3 V_\pm$. To conclude, given $x \in M$ and $e^- \in V_{-|x}$, using that $M$ is positive-dimensional we can take a sequence $\{e^-_k\}$ of elements $e^-_k \in V_{-|x}$ which approximate $e^-$ and such that $\pi(e^-_k) \neq 0$. We will prove that 
$$
Ric_{D'}^+(e^-_k,e^+) = Ric_{D}^+(e^-_k,e^+)
$$
holds for all $k$, and hence the statement will follow since $Ric_{D'}^+$ and $Ric_{D}^+$ are bilinear. For this, we construct a sequence of special orthogonal frames in order to evaluate formula \eqref{eq:RicDD} with $e^-$ replaced by $e^-_k$. Choose an orthogonal frame $\{e_{i,k}^+\}$ around $x$ for $V_+$ which satisfies $D_{e^-_k}e_{i,k}^+ = 0$ at $x$. This frame can constructed by smooth extension of the parallel transport for the $V_-$-connection $D^+_-$ along a curve starting at $x$ with initial velocity $\pi(e^-_k)$ (see e.g. \cite{Loja}). The proof now follows from the previous formula, using that $[e^-_k,e_{i,k}^+]^+ = D_{e^-_k}e_{i,k}^+$ and $[e^-_k,\tilde e_{i,k}^+]^+ = D_{e^-_k}\tilde e_{i,k}^+$, combined with the fact that $\tilde e_{i,k}^+ = \epsilon_i e_{i,k}^+$ for some constant $\epsilon_i \in \RR$. The last part of the statement follows from Lemma \ref{lem:weylfixed}.
\end{proof}

This lead us to our definition of the Ricci tensor.

\begin{definition}\label{def:Ricci}
Let $div$ be a divergence operator on the Courant algebroid $E$, and let $V_+ \subset E$ be a generalized metric on $E$. The \emph{Ricci tensors} 
$$
Ric^\pm \in \Gamma(V_\mp^* \otimes V_\pm^*)
$$ 
of the pair $(V_+,div)$ are defined by $Ric^\pm = Ric^\pm_D$ for any choice of generalized connection $D \in \cD(V_+,div)$.
\end{definition}

\begin{remark}\label{rem:quadLie}
In the case that $M$ is a point, that is, when $E$ is a quadratic Lie algebra, the proof of Proposition \ref{propo:Riccitorsion} using parallel transport does not apply and the Ricci tensors $Ric^\pm_D$ depend upon the choice of $\chi^\pm_0 \in \Sigma_0^\pm$. A conceptual explanation of this fact is as follows: given a vector space $W$, thought of as a bundle over a point, a $W$-connection on $W$ is an element $D \in W^* \otimes \End W$. The unique trivial path on the base can be lifted to a constant path $w_0 \in W$. Given $w \in W$ the parallel transport of $w$ along $w_0$ may not exist, unless $D_{w_0} w = 0$. This suggests that formula \eqref{eq:Ricci} is not the best approach to Definition \ref{def:Ricci}. A more fundamental approach to the Ricci tensor using spinors is provided by Lemma \ref{lem:Ricci} below (see Example \ref{example:Riccioverpoint}).
\end{remark}


We continue our discussion with an alternative point of view on the Ricci tensor, using the canonical Dirac operators constructed in Section \ref{sec:ggdirac}. For this, we assume that $V_\pm$ admit spinor bundles $S_\pm$ and consider the twisted spinor bundles $\mathcal{S}_\pm$ endowed with the canonical Dirac operators $\slashed D^\pm$ on $\Gamma(\mathcal{S}_\pm)$ as in Lemma \ref{lem:dDpm}.

\begin{lemma}\label{lem:Ricci}
Assume that $M$ is positive-dimensional. Let $V_+ \subset E$ be a generalized metric and let $div$ be a divergence operator on $E$. Then, 
the Ricci tensors in Definition \ref{def:Ricci} can be calculated by
\begin{equation}\label{eq:Ricci+-op}
\iota_{e^\mp} Ric^\pm \cdot \alpha = 4\Big{(}\slashed D^\pm D_{e^\mp} - D_{e^\mp}\slashed D^\pm - 2 \sum_{i=1}^{r_\pm} \tilde e_i^\pm \cdot D_{[e_i^\pm,e^\mp]^\mp}\Big{)}\alpha,
\end{equation}
where $\alpha \in \Gamma(\mathcal{S}_\pm)$ and $e^\mp \in \Gamma(V_\mp)$.
\end{lemma}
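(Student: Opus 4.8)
The plan is to compute the commutator $\slashed D^\pm D_{e^\mp} - D_{e^\mp}\slashed D^\pm$ directly in terms of the curvature operators $R_D^\pm$, and then to recognize the trace that appears as the Ricci tensor of Definition~\ref{def:RicciD}. By Lemma~\ref{lem:dDpm} the Dirac operators $\slashed D^\pm$ depend only on $(V_+,div)$, and by Definition~\ref{def:Ricci} the same is true for $Ric^\pm$, so it suffices to verify \eqref{eq:Ricci+-op} for a single convenient choice of $D \in \cD(V_+,div)$; I will keep $D$ arbitrary in $\cD(V_+,div)$ since the computation is uniform.

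I would argue for the $+$ case, the $-$ case being symmetric. Fix an orthonormal local frame $\{e_i^+\}$ for $V_+$ (so $\tilde e_i^+ = e_i^+$), and recall $\slashed D^+\alpha = \tfrac12\sum_i e_i^+ \cdot D_{e_i^+}\alpha$. Then for $e^- \in \Gamma(V_-)$,
\begin{equation*}
\slashed D^+ D_{e^-}\alpha - D_{e^-}\slashed D^+\alpha = \tfrac12\sum_i \Big( e_i^+ \cdot D_{e_i^+}D_{e^-}\alpha - D_{e^-}\big(e_i^+ \cdot D_{e_i^+}\alpha\big)\Big).
\end{equation*}
Using that $D^S$ acts on the spinor bundle $\mathcal{S}_+$ as a derivation over Clifford multiplication — i.e. $D_{e^-}(e_i^+\cdot\beta) = (D_{e^-}e_i^+)\cdot\beta + e_i^+\cdot D_{e^-}\beta$ — and then rewriting $D_{e_i^+}D_{e^-} - D_{e^-}D_{e_i^+} = R_D^+(e_i^+,e^-) + D_{[e_i^+,e^-]}$ via \eqref{eq:curvature}, the expression reorganizes into a curvature term plus terms involving $[e_i^+,e^-]$ and $D_{e^-}e_i^+$. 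The latter should cancel against each other after using Lemma~\ref{lem:mixedfixed} (which pins down $D_{e^-}e_i^+ = [e^-,e_i^+]^+$) together with the derivation property once more; one should also use that $\sum_i e_i^+\cdot D_{e_i^+}$ applied to a frame-dependent vector recombines correctly, i.e. exploit that $\sum_i$ is frame-independent. What survives is $\tfrac12\sum_i e_i^+ \cdot \big(R_D^+(e_i^+,e^-)e^+\big)\cdot{}$ acting as a composition of Clifford endomorphisms — here I use that $R_D^+(e_i^+,e^-) \in \mathfrak{o}(V_+)$ acts on $\mathcal{S}_+$ via the standard spin representation, so $R_D^+(e_i^+,e^-)$ as a $2$-form acts by $\tfrac14\sum_{j,k}\la R_D^+(e_i^+,e^-)e_j^+,e_k^+\ra e_j^+\cdot e_k^+$.

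The main obstacle is the final Clifford-algebraic identity: one must show that $\sum_i e_i^+ \cdot \big[R_D^+(e_i^+,e^-)\big]$, expanded in the spin representation, collapses (modulo terms killed by the first Bianchi identity) to a single Clifford element proportional to $\sum_i \la R_D^+(e_i^+,e^-)e^+,\tilde e_i^+\ra = Ric_D^+(e^-,e^+)$ acting by multiplication. This is where Proposition~\ref{propo:bianchi} enters decisively: expanding the triple Clifford product $e_i^+\cdot e_j^+\cdot e_k^+$ and using the Clifford relations produces, besides the desired degree-one term, a degree-three piece whose coefficient is exactly the cyclic sum $c.p._{ijk}\la R_D^+(e_i^+,e^-)e_j^+,e_k^+\ra$, which vanishes by \eqref{eq:bianchi}. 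The bookkeeping of the contraction indices — verifying that the surviving degree-one term carries the right numerical factor to match the $4$ on the right-hand side of \eqref{eq:Ricci+-op}, and that $\iota_{e^-}Ric^+$ is interpreted as multiplication by the function $Ric^+(e^-,\cdot)$... wait, more precisely $\iota_{e^-}Ric^+ \in \Gamma(V_+^*)$ acts on $\alpha$ by Clifford multiplication by the corresponding element of $V_+$ — is routine but must be done carefully, and constitutes the only real computational content once the structural reductions above are in place.
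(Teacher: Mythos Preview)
Your overall strategy---reduce to a curvature term and kill the degree-three Clifford piece via the Bianchi identity \eqref{eq:bianchi}---matches the paper's. However, there is a genuine gap in the step where you claim the ``latter should cancel against each other.'' In fact the right-hand side of \eqref{eq:Ricci+-op} is \emph{not tensorial} in $e^-$, so the extra terms cannot cancel for a general section $e^-$. Concretely, after writing $D_{e_i^+}D_{e^-} - D_{e^-}D_{e_i^+} = R_D^+(e_i^+,e^-) + D_{[e_i^+,e^-]}$ and using the derivation property, you are left (in an orthonormal frame) with
\[
\tfrac{1}{2}\sum_i e_i^+ \cdot D_{[e_i^+,e^-]^+}\alpha \; + \; \tfrac{1}{2}\sum_i [e_i^+,e^-]^+ \cdot D_{e_i^+}\alpha \; + \; \tfrac{1}{2}\sum_i e_i^+ \cdot D_{[e_i^+,e^-]^-}\alpha.
\]
The first two terms do cancel (the coefficient matrix $A_{ij} = \langle [e_i^+,e^-],e_j^+\rangle$ is skew for an orthonormal frame), but the third term equals $\tfrac{1}{2}\sum_i e_i^+ \cdot D_{D_{e_i^+}e^-}\alpha$, which depends on first derivatives of $e^-$ and survives. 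No amount of frame-independence of $\slashed D^+$ makes this vanish.

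The paper deals with this by giving the formula a pointwise meaning: at $x\in M$ one chooses a frame $\{e_i^+\}$ with $D_{e^-}e_i^+|_x = 0$ (via parallel transport for $D^+_-$ along a curve with velocity $\pi(e^-)$), \emph{and} one extends $e^-$ by parallel transport for $D^-_+$ along curves with velocities $\pi(e_i^+)$, so that $D_{e_i^+}e^-|_x = 0$ as well. With these choices $[e_i^+,e^-]|_x = 0$ and all the residual terms vanish at $x$, leaving only $\tfrac{1}{2}\sum_i e_i^+ \cdot R_D^+(e_i^+,e^-)\alpha$. From there your Clifford bookkeeping and use of Proposition~\ref{propo:bianchi} are exactly right. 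So the missing ingredient is not algebraic cancellation but this normal-coordinate-type interpretation of the (non-tensorial) commutator.
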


\begin{proof}
It is easy to see that the right hand side of \eqref{eq:Ricci+-op} is tensorial in $e^\mp \in \Gamma(V_\mp)$. To evaluate \eqref{eq:Ricci+-op}, we choose an orthogonal frame $\{e_i^+\}$ for $V_+$ around $x \in M$ which satisfies $D_{e^-}e_i^+ = 0$ at the point $x$. This frame can constructed as in the proof of Proposition \ref{propo:Riccitorsion}, by smooth extension of the parallel transport for the $V_-$-connection $D^+_-$ along a curve starting at $x$ with initial velocity $\pi(e^-)$ (see e.g. \cite{Loja}). With the conventions above we have
\begin{align*}
([\slashed D^\pm,D_{e^\mp}]\alpha)_{|x} &= \frac{1}{2}\sum_{i=1}^{r_+} \tilde e_i^+ \cdot (D_{e_i^+}D_{e^-} - D_{e^-} D_{e_i^+} - D_{[e_i^+,e^-]}) \alpha\\
& = \frac{1}{2}\sum_{i=1}^{r_+} \tilde e_i^+ \cdot R^+_D(e_i^+,e^-) \cdot \alpha\\
& = \frac{1}{4}\tr (d^+ \to R^+_D(d^+,e^-))\cdot \alpha \\
& - \frac{1}{4} \sum_{i < j < k} \(c.p._{ijk} \la R^+_D(e_i^+,e^-)e_j^+,e_k^+\ra\) e_i^+ \cdot e_j^+ \cdot e_k^+ \cdot \alpha,
\end{align*}
where in the first equality we have used that $[e^-,e_i^+]^+ = D_{e^-}e_i^+$ by Lemma \ref{lem:mixedfixed}, and $(D_{e^-}e_i^+)_{|x} = 0$ by the choice of frame. The proof follows from the algebraic Bianchi identity in Proposition \ref{propo:bianchi}.
\end{proof}

\begin{remark}\label{rem:RicciWaldram}
Our formula \eqref{eq:Ricci+-op} is an interpretation of a formula for the Ricci tensor of a generalized metric in the physics literature \cite{CSW}, claimed without proof. Our proof relies in the Bianchi identity \eqref{propo:bianchi} for the generalized curvature of a torsion free-generalized connection, which seems to be new.
\end{remark}

Since the right hand side of \eqref{eq:Ricci+-op} is tensorial in $e^\mp$ 
and the operators $\slashed D^+$ and $D_-^+$ 
only depend on $(V_+,div)$ (see Lemma \ref{lem:mixedfixed} and Lemma \ref{lem:dDpm}), formula \eqref{eq:Ricci+-op} can be taken as an alternative definition of the Ricci tensors, without relying on Proposition \ref{propo:Riccitorsion}. For this, we can regard \eqref{eq:Ricci+-op} as a local formula, and therefore there is no obstruction for the existence of the spinor bundles. It is interesting to notice that, unlike Definition \ref{def:Ricci}, formula \eqref{eq:Ricci+-op} yields a definition of the Ricci tensor in the case of quadratic Lie algebras which is independent of the choice of generalized connection $D \in \cD(V_+,div)$ (cf. Remark \ref{rem:quadLie}).
We finish this section with an explicit calculation of the Ricci tensors in two examples. Further explicit calculations of Ricci tensors on transitive Courant algebroids are provided in Section \ref{subsec:transitive}.

\begin{example}\label{example:Ricci}
From the formulae in Example \ref{example:Curvature}, we obtain the following expression for the Ricci tensors of $(V_+,div^\varphi)$ (see \eqref{eq:divergencevarphiex}) 
\begin{equation}\label{eq:Riccipmexact}
Ric^\pm = Ric_g - \frac{1}{4} H \circ H \mp \frac{1}{2}d^*H - \frac{1}{4}\nabla^\pm\varphi,
\end{equation}
where $H \circ H$ denotes the symmetric tensor $(H \circ H)_{ij} = g^{rs}g^{kl}H_{irk}H_{jsl}$. We notice an important property of these tensors--strongly reminiscent of the symmetric property of the Ricci tensor in Riemannian geometry--for which we have no Lie theoretic proof: the decomposition $Ric^\pm = h^\pm + b^\pm$ into symmetric and skew symmetric parts, respectively, satisfies:
\begin{equation}\label{eq:Riccisymmetry}
h^+ = h^-, \qquad b^+ = -b^-,
\end{equation}
provided that $d \varphi = 0$. Recall from Example \ref{example:dirac} that this last condition is equivalent to $\slashed D^\varphi$ being a Dirac generating operator on the exact Courant algebroid. Some consequences of this important symmetry will be discussed in Section \ref{sec:flow}. 
When $\varphi = 0$, $Ric^\pm$ equals the Ricci tensor of the connection $\nabla^\pm$, which corresponds to the Ricci tensor for the Gualtieri-Bismut connection $Ric^\pm_{D^B}$. We note here that $div_{D^B} = div_{D^0}$, in agreement with Proposition \ref{propo:Riccitorsion}.
\end{example}

\begin{example}\label{example:Riccioverpoint}
Assume that $M$ is a point, that is, $E$ is a quadratic Lie algebra. In this case, a divergence operator is just an element $e \in E$, and formula \eqref{eq:Ricci+-op} yields
\begin{equation}\label{eq:Ricciexp}
Ric^+(e_1^-,e_2^+) = \tr (d^+ \to [[e_1^-,d^+]^-,e_2^+]^+) - \la [e^+,e_1^-],e_2^+\ra.
\end{equation}
Incidentally, when $e = 0$ this coincides with formula \eqref{eq:Ricci} for the following choice of generalized connection with zero torsion and divergence
$$
D_{e_1}e_2 = [e_1^-,e_2^+]^+ + [e_1^+,e_2^-]^- + \frac{1}{3}[e_1^+,e_2^+]^+ + \frac{1}{3}[e_1^-,e_2^-]^-.
$$

\end{example}

\section{Ricci flow and the Killing spinor equations}\label{sec:flow}

\subsection{The Ricci flow in generalized geometry}


Let $E$ be a Courant algebroid over a smooth manifold $M$. Throughout this section, we change our perspective and consider that a generalized metric is given by an orthogonal endomorphism
$$
G \colon E \to E
$$
inducing a non-degenerate pairing $\la G \cdot, \cdot \ra$ and squaring to the identity $G^2 = \Id$. With this alternative definition (see Section \ref{sec:ggmetrics}), the orthogonal decomposition $E = V_ + \oplus V_-$ corresponds to the $\pm 1$-eigenbundle decomposition of $E$ with respect to $G$.

Given a smooth one-parameter family of generalized metrics $G_t$, the variation $\partial_t G$ defines a skew-symmetric endomorphism of $E$ by
$$
G_t \partial_t G \in \Gamma(\mathfrak{o}(E)),
$$ 
which exchanges the subbundles $V_+$ and $V_-$, yielding a pair of endomorphisms
$$
\partial_t G^+ \colon V_- \to V_+, 
$$
$$
\partial_t G^- \colon V_+ \to V_-.
$$

We are ready to introduce our notion of \emph{Ricci flow}. 

\begin{definition}\label{def:flow}
Let $E$ be a Courant algebroid over a smooth manifold $M$. We say that a one-parameter family $(G_t,div_t)$ of generalized metrics $G_t$ and divergence operators $div_t$ is a solution of the \emph{Ricci flow} if
\begin{equation}\label{eq:Ricciflowpm}
\partial_t G^+ = - 2 Ric^+_t,
\end{equation}
where $Ric^+_t$ is the Ricci tensor of $(G_t,div_t)$ as in Definition \ref{def:Ricci}.
\end{definition}

In the previous definition, the Ricci tensors $Ric^\pm$ of $(G,div)$ are regarded as endomorphisms
$$
Ric^\pm \colon V_\mp \to V_\pm,
$$
using the induced metric on $V_\pm$ to identify $V_\pm \cong V_\pm^*$. Note that the choice of $Ric^+$ instead of $Ric^-$ is just a matter of convention, since we are considering metrics with arbitrary signature. Building on the seminal work by Hamilton \cite{Hamilton}, short-time existence, higher derivative estimates, and a compactness theorem for the flow \ref{eq:Ricciflowpm} on a mild class of transitive Courant algebroids have been provided in \cite{He}.


We show next that the Ricci flow introduced in Definition \ref{def:flow} extends a previous notion of the Ricci flow on exact Courant algebroids by Gualtieri (see \cite{Streets}). To see this, we assemble the Ricci tensors $Ric^\pm$ into a unique endomorphism
\begin{equation}\label{eq:totalRicci}
Ric := Ric^+ - Ric^- \in \Gamma(\End E),
\end{equation}
that we will call the \emph{total Ricci tensor},
and consider the evolution equation
\begin{equation}\label{eq:ggflow}
\partial_t G = [Ric,G_t].
\end{equation}
When $E$ is an exact Courant algebroid and $\partial_t div = 0$, \eqref{eq:ggflow} coincides with Gualtieri's definition for the \emph{generalized Ricci flow}. Note that the right hand side of this equation is $-2(Ric^+_t + Ric^-_t)$, and therefore \eqref{eq:ggflow} is equivalent to the system
\begin{equation}\label{eq:ggflowpair}
\begin{split}
\partial_t G^+ = - 2Ric_t^+,\\
\partial_t G^- = - 2Ric_t^-.
\end{split}
\end{equation}
A priori, these equations are stronger than the Ricci flow \eqref{eq:Ricciflowpm}. The equivalence between the  flows \eqref{eq:Ricciflowpm} and \eqref{eq:ggflow} relies on an expected symmetry of the Ricci tensor in Definition \ref{def:Ricci}, for which we still do not have a general proof. To motivate the next definition, note that if $(G_t,div_t)$ is a solution of the system \eqref{eq:ggflowpair}, then the total Ricci tensor \eqref{eq:totalRicci} must be an element in $\Gamma(\mathfrak{o}(E))$, as so is $G_t \partial_t G$.

\begin{definition}\label{def:Riccisym}
A pair $(V_+,div)$ has the \emph{skew-symmetry property} if the total Ricci tensor \eqref{eq:totalRicci} is skew-orthogonal with respect to the ambient metric on $E$, that is,
$$
Ric \in \Gamma(\mathfrak{o}(E)).
$$
\end{definition}

The relevance of the skew-symmetry property is due to the following fact, which is a a straightforward consequence of the previous definition.

\begin{proposition}\label{prop:Riccisym}
A solution $(G_t,div_t)$ of the Ricci flow \eqref{eq:Ricciflowpm} is also a solution of the generalized Ricci flow \eqref{eq:ggflow} if and only if $(G_t,div_t)$ has the skew-symmetry property for all $t$.
\end{proposition}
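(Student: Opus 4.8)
The plan is to unwind the definitions on both sides of the "if and only if." Recall from Definition~\ref{def:flow} that $(G_t,div_t)$ solves the Ricci flow precisely when $\partial_t G^+ = -2Ric^+_t$, whereas by the discussion following \eqref{eq:ggflow} the generalized Ricci flow \eqref{eq:ggflow}, written as $\partial_t G = [Ric,G_t]$, is equivalent to the pair of equations \eqref{eq:ggflowpair}, i.e. to $\partial_t G^+ = -2Ric^+_t$ \emph{and} $\partial_t G^- = -2Ric^-_t$. So the content to extract is: given that the first equation holds for all $t$, the second holds for all $t$ if and only if $(G_t,div_t)$ has the skew-symmetry property for all $t$.

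First I would recall the structural fact, stated just before Definition~\ref{def:flow}, that for any one-parameter family of generalized metrics the endomorphism $G_t\partial_t G$ lies in $\Gamma(\mathfrak{o}(E))$ and exchanges $V_+$ and $V_-$; concretely this means $\partial_t G^-$ is the negative transpose of $\partial_t G^+$ under the identifications $V_\pm\cong V_\pm^*$ coming from the metric (equivalently, the off-diagonal block $\partial_t G^+\colon V_-\to V_+$ determines $\partial_t G^-\colon V_+\to V_-$, and these are the only nonzero blocks of $G_t\partial_t G$). Next I would unpack the total Ricci tensor $Ric = Ric^+ - Ric^-$, viewing $Ric^\pm$ as maps $V_\mp\to V_\pm$ as in the paragraph after Definition~\ref{def:flow}; then $Ric$ is block off-diagonal by construction, with blocks $Ric^+\colon V_-\to V_+$ and $-Ric^-\colon V_+\to V_-$. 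Thus $Ric$ being skew-orthogonal, i.e. $Ric\in\Gamma(\mathfrak{o}(E))$, is equivalent to the statement that $-Ric^-$ is the negative transpose of $Ric^+$ under $V_\pm\cong V_\pm^*$, i.e. that $Ric^-$ equals the transpose of $Ric^+$ in the appropriate sense.

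Now the argument assembles cleanly. Suppose the Ricci flow \eqref{eq:Ricciflowpm} holds, so $\partial_t G^+ = -2Ric^+_t$. Taking negative transposes of both sides and using the two observations above, $\partial_t G^- = -2(\text{transpose of }Ric^+_t)$. Hence $\partial_t G^- = -2Ric^-_t$ holds if and only if $Ric^-_t$ equals the transpose of $Ric^+_t$, which is exactly the skew-symmetry property of $(V_+,div)$ at time $t$ by the computation in the preceding paragraph. Since this equivalence is pointwise in $t$, requiring it for all $t$ gives: $(G_t,div_t)$ solves \eqref{eq:ggflowpair} (equivalently \eqref{eq:ggflow}) if and only if $(G_t,div_t)$ has the skew-symmetry property for all $t$. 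Combined with the fact that \eqref{eq:ggflowpair} always implies \eqref{eq:Ricciflowpm}, this is the claimed statement.

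The only place requiring care — and the step I expect to be the main (minor) obstacle — is the bookkeeping of transposes: making precise that "skew-orthogonal" for the block off-diagonal endomorphism $Ric$ translates into the symmetry relation $h^+=h^-$, $b^+=-b^-$ of \eqref{eq:Riccisymmetry} between the symmetric and skew parts of $Ric^\pm$, and that the same transpose relation is automatic for $G_t\partial_t G$. This is genuinely routine linear algebra with the sign conventions $e^\pm = \tfrac12(\Id\pm G)e$ and the identifications $V_\pm\cong V_\pm^*$, but it is the one spot where a sign error would break the proof, so I would write it out carefully rather than assert it.
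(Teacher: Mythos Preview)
Your proposal is correct and matches the paper's approach: the paper itself offers no proof beyond the remark that the proposition ``is a straightforward consequence of the previous definition,'' and what you have written is precisely the straightforward unpacking the paper has in mind. The only comment is that your caveat about the transpose bookkeeping is well placed but genuinely minor: once you write $\partial_t G$ and $Ric$ as block off-diagonal endomorphisms with respect to $E=V_+\oplus V_-$, the symmetry of $\partial_t G$ (hence skewness of $G_t\partial_t G$) forces $\partial_t G^- = (\partial_t G^+)^*$, while $Ric\in\Gamma(\mathfrak{o}(E))$ is exactly $Ric^- = (Ric^+)^*$, and the equivalence drops out.
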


On an exact Courant algebroid, a pair $(V_+,div)$--given by a generalized metric $V_+ \subset E$ such that $V_+ \cap T^* = \{0\}$ and a divergence operator $div$--has the skew symmetry property if and only if $div$ is induced by a Dirac generating operator. 
Note that $V_+ \cap T^* = \{0\}$ is the condition for $V_+$ to induce a metric $g$ on $M$. Our claim follows from an explicit general formula for the Ricci tensors in Example \ref{example:Ricciflow} below (see also Lemma \ref{lem:Riccitr}). 
More generally, the `if part' of this result is true for transitive Courant algebroids obtained by reduction (see Proposition \ref{prop:Riccisymtr}). On general grounds, when $E$ admits a spinor bundle we expect that a pair $(V_+,div)$ has the skew-symmetry property provided that $div$ is induced by a Dirac generating operator, but we have not been able to give a Lie-theoretic proof of this fact.

In the next example we give an explicit formula for the flows \eqref{eq:Ricciflowpm} and \eqref{eq:ggflow} in the case of exact Courant algebroids. A formula in the more general case of transitive Courant algebroids obtained by reduction is provided by \eqref{eq:Ricciflowtr}.

\begin{example}\label{example:Ricciflow}
Let $(G_t,div_t)$ be a solution of the generalized Ricci flow \eqref{eq:ggflow} on an exact Courant algebroid $E$, with $G_t$ inducing a metric $g_t$ on $M$ for all $t$. Fixing an isotropic splitting for $E$, the family $(G_t,div_t)$ can be written as
$$
G_t = e^{b_t} \left( \begin{array}{cc}
0 & g_t^{-1} \\
g_t & 0 \end{array}\right) e^{-b_t}
$$
$$
div_t(e) = \mu^{-1}_{g_t}L_{\pi(e)}\mu_{g_t} - \la e_t,e\ra
$$
where $b_t$ is the $2$-form which codifies the isotropic splitting determined by $G_t$, $H$ is the closed $3$-form in the fixed isotropic splitting and $e_t \in \Gamma(T \oplus T^*)$. 
Without loss of generality, we assume $b_{t_0} = 0$, and then the derivative of $G_t$ at times $t_0$ is
$$
\partial_t G = [\partial_t b,G_{t_0}] + \left( \begin{array}{cc}
0 & - g_t^{-1} \partial_t g_t g_t^{-1} \\
\partial_t g_t & 0 \end{array}\right)
$$
which implies that the two equations in the system \eqref{eq:ggflowpair} are equivalent to, respectively, (see \eqref{eq:Riccipmexact})
\begin{equation}\label{eq:flowex}
\begin{split}
\partial_t g_t - \partial_t b_t & = -2 Ric^+ = -2(Ric_{g_t} - \frac{1}{4} H_t \circ H_t - \frac{1}{2}d^*H_t - \frac{1}{4} \nabla^+\varphi_t),\\
\partial_t g_t + \partial_t b_t & = -2 Ric^- = -2(Ric_{g_t} - \frac{1}{4} H_t \circ H_t + \frac{1}{2}d^*H_t - \frac{1}{4} \nabla^-\sigma_t),
\end{split}
\end{equation}
where $e_t = \varphi_t^+ + \sigma_t^-$, for a pair of (families of) $1$-forms $\varphi_t, \sigma_t \in  \Gamma(T^*)$. The first equation is the Ricci flow of $(G_t,div_t)$ in Definition \ref{def:flow}. When $\varphi_t = \sigma_t$, that is, when $\pi(e_t) = 0$, and furthermore $d \varphi_t = 0$, the pair $(G_t,div_t)$ has the skew-symmetry property, and therefore the system \eqref{eq:flowex} reduces to one equation by \eqref{eq:Riccisymmetry}.
\end{example}

\begin{remark}\label{rem:dilaton}
In the case that $\varphi_t = 2d\phi_t$ is exact, the first equation in \eqref{eq:flowex} corresponds to the renormalization group flow in Type II string theory \cite{CFMP}, 
and therefore have an important physical motivation. In this context, 
the symmetric and skew-symmetric parts of the Ricci tensor $Ric^+$ are (essentially) the $\beta$-\emph{functions} for 
the gravitational field and the $B$-field, respectively, which control the conformal invariance of the effective
action in the sigma model approach to the theory. The function potential $\phi_t$ is the so-called \emph{dilaton field}, which plays an important role in T-duality (see Section \ref{subsec:dilatonshift}). 
\end{remark}

\subsection{Killing spinors in generalized geometry}\label{subsec:Killing}

The stationary points of the Ricci flow \eqref{eq:Ricciflowpm} correspond to \emph{Ricci flat pairs} $(V_+,div)$, that is, satisfying
\begin{equation}\label{eq:Ricciflat}
Ric^+ = 0.
\end{equation}
Being the Ricci flat condition a second order equation, it is natural to ask whether it can be regarded as an integrability condition for a first-order equation. This motivates our next definition. 


\begin{definition}\label{def:killing}
We say that a pair $(V_+,div)$ is a solution of the \emph{Killing spinor equations}, if there exists a spinor bundle $S_+$ for $V_+$ and a non-vanishing section of the twisted spinor bundle $\eta \in \Gamma(\mathcal{S}_+)$ such that
\begin{equation}\label{eq:killing}
\begin{split}
D^+_- \eta &= 0,\\
\slashed D^+ \eta &= 0,
\end{split}
\end{equation}
where $D^+_-$ and $\slashed D^+$ are the operators defined in Lemma \ref{lem:mixedfixed} and Lemma \ref{lem:dDpm}.
\end{definition}

Lemma \ref{lem:mixedfixed} and Lemma \ref{lem:dDpm} imply that these equations are independent of the choice of torsion-free generalized connection, as the operators $D^+_-$ and $\slashed D^+$ are canonically defined by the pair $(V_+,div)$. Definition \ref{def:killing} generalizes \cite[Definition 5.3]{grt} to arbitrary Courant algebroids, and it is inspired by the approach to the \emph{Killing spinor equations in supergravity} in \cite{CSW,GrMPrTo} (see Remark \ref{rem:killingname} and Remark \ref{rem:killingphys}).

In the next proposition we state the precise relation between solutions of the Killing spinor equations and Ricci flat pairs. This result, that follows as a direct consequence of Lemma \ref{lem:Ricci}, can be seen as an analogue of a classical result in Riemannian geometry (resp. pseudo-Riemannian geometry): metrics with parallel spinor fields are Ricci flat (resp. \emph{Ricci null}).

\begin{proposition}\label{prop:Ricciflat}
If $(V_+,div)$ is a solution of the Killing spinor equations \eqref{eq:killing}, then the image of the endomorphism $Ric^+ \colon V_- \to V_+$ consists of null vectors. In particular, if $V_+$ is positive definite then $Ric^+ = 0$.
\end{proposition}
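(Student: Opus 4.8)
The plan is to feed the Killing spinor $\eta$ into the spinorial formula for the Ricci tensor established in Lemma~\ref{lem:Ricci}, and observe that both terms on the right-hand side vanish. Concretely, pick any $D \in \cD(V_+,div)$; by Lemma~\ref{lem:Ricci} we have, for $e^- \in \Gamma(V_-)$,
$$
\iota_{e^-} Ric^+ \cdot \alpha = 4(\slashed D^+ D_{e^-} - D_{e^-}\slashed D^+)\alpha
$$
for any $\alpha \in \Gamma(\mathcal{S}_+)$, interpreted pointwise via \eqref{eq:Ricci+-opx}. Taking $\alpha = \eta$, the Killing spinor equations \eqref{eq:killing} give $\slashed D^+ \eta = 0$ and $D^+_- \eta = 0$, i.e. $D_{e^-}\eta = 0$ for all $e^- \in \Gamma(V_-)$. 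Hence $D_{e^-}(\slashed D^+ \eta) = D_{e^-} 0 = 0$ and $\slashed D^+(D_{e^-}\eta) = \slashed D^+ 0 = 0$, so the commutator annihilates $\eta$:
$$
\iota_{e^-} Ric^+ \cdot \eta = 0.
$$
Since $\eta$ is nowhere vanishing, at each point $x \in M$ the Clifford-module element $\iota_{e^-}Ric^+$, acting on the nonzero spinor $\eta_x$, gives zero.

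The one subtlety to handle carefully is that the right-hand side of \eqref{eq:Ricci+-op} is not tensorial in $e^-$, so the vanishing $D_{e^-}\eta = 0$ has to be used in the pointwise sense dictated by the recipe following Lemma~\ref{lem:Ricci}: one extends $\eta$ by choosing the special frame $\{e_i^+\}$ with $D_{e^-}e_i^+ = 0$ at $x$ and extends $e^-$ by $D^-_+$-parallel transport along the $\gamma_i$; the key point is that \eqref{eq:killing} forces $D_{e^-}\eta$ and $D_{e_i^+}\eta$ to vanish identically, so every term in \eqref{eq:Ricci+-opx} is zero, not merely the combination. Thus the argument goes through verbatim with $\alpha = \eta$, and $\iota_{e^-}Ric^+ \cdot \eta = 0$ at every point. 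The case $x \in M$ with $\pi(e^-) = 0$ is covered by the approximation argument already used in the proof of Proposition~\ref{propo:Riccitorsion}, since $Ric^+$ is a smooth tensor.

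Now I would translate the spinorial vanishing back into a statement about $Ric^+$ as a vector, i.e.\ an element of $V_+$. For fixed $e^- \in V_{-|x}$, the contraction $\iota_{e^-}Ric^+$ is a single vector $v \in V_{+|x}$ (using $V_+ \cong V_+^*$), acting on $\eta_x$ by Clifford multiplication, and $v \cdot \eta_x = 0$. Squaring the Clifford relation gives $\la v, v\ra \, \eta_x = v \cdot v \cdot \eta_x = v\cdot 0 = 0$, and since $\eta_x \neq 0$ this forces $\la v,v\ra = 0$: the vector $v = \iota_{e^-}Ric^+$ is null. As $e^-$ ranges over $V_-$ this says exactly that the image of $Ric^+ \colon V_- \to V_+$ consists of null vectors. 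Finally, if $V_+$ is positive definite then the only null vector is $0$, so $Ric^+ = 0$, giving the Ricci-flat conclusion.

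The main obstacle, such as it is, is purely bookkeeping: making sure the non-tensorial formula \eqref{eq:Ricci+-op} is applied with a legitimate choice of local frames and that the Killing conditions genuinely kill every individual summand of \eqref{eq:Ricci+-opx}; once that is in place the proof is a two-line consequence of Lemma~\ref{lem:Ricci} plus the Clifford relation $v\cdot v = \la v,v\ra$.
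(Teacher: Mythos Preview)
Your proof is essentially the paper's own: invoke Lemma~\ref{lem:Ricci} with $\alpha=\eta$ to obtain $\iota_{e^-}Ric^+\cdot\eta=0$, then square via the Clifford relation $v\cdot v=\la v,v\ra$ and use $\eta\neq 0$ to conclude $|\iota_{e^-}Ric^+|^2=0$.

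One small slip worth flagging: in your subtlety paragraph you write that \eqref{eq:killing} forces $D_{e_i^+}\eta$ to vanish identically. It does not---the second Killing equation is only the Dirac condition $\slashed D^+\eta=\tfrac{1}{2}\sum_i\tilde e_i^+\cdot D_{e_i^+}\eta=0$, not full $V_+$-parallelism $D^+_+\eta=0$. Fortunately this overclaim is unnecessary: in \eqref{eq:Ricci+-opx} the first sum vanishes termwise because $D_{e^-}\eta=0$ holds identically (first Killing equation), while the second sum is precisely $-4\,D_{e^-}(\slashed D^+\eta)=0$. Your earlier, correct sentence ``$D_{e^-}(\slashed D^+\eta)=0$ and $\slashed D^+(D_{e^-}\eta)=0$'' already does the job.
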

\begin{proof}
By Lemma \ref{lem:Ricci} we have $\iota_{e_-}Ric^+ \cdot \eta = 0$. Since $\eta$ is non-vanishing, Clifford multiplication with $\iota_{e_-}Ric^+$ yields $|\iota_{e_-}Ric^+|^2 = 0$, and the result follows.
\end{proof}

Solutions of the Killing spinor equations \eqref{eq:killing} are very interesting objects, both from a geometric and from a physical point of view. To see this, let us focus on the case of a \emph{Riemannian generalized metric} $V_+$ on an exact Courant algebroid $E$ over $M$, that is, a generalized metric inducing a Riemannian metric on $M$ via $\pi_+ \colon V_ + \to  T$. In this context, the condition that $V_+$ admits an irreducible Clifford module in Definition \ref{def:killing} implies that $M$ admits a spin structure. Using the anchor map and the explicit formulae \eqref{eq:bismutmix} and \eqref{eq:LCvarphiDirac}, the Killing spinor equations \eqref{eq:killing} read
\begin{equation}\label{eq:killingexp}
\begin{split}
\nabla^+ \eta  = 0,\\
\slashed \nabla^{\frac{1}{3}} \eta + \frac{1}{4} \varphi \cdot \eta = 0.
\end{split}
\end{equation}
The equations \eqref{eq:killingexp} are equivalent, in low dimensions, to the \emph{Killing spinor equations in supergravity} \cite{FrIv} (see Remark \ref{rem:killingname} and Remark \ref{rem:killingphys}).
In the case $\varphi = 0$, Proposition \ref{prop:Ricciflat} combined with \eqref{eq:Riccipmexact} imply that any solution of \eqref{eq:killingexp} is an \emph{Einstein metric with skew torsion}, as studied in \cite{Ferreira2}. 

Using the explicit formula \eqref{eq:killingexp} combined with Proposition \ref{prop:Ricciflat}, we show next that solutions of the Killing spinor equations 
correspond to (standard) \emph{special holonomy metrics} on a compact manifold, provided that $\varphi$ is exact. Several proofs of this fact by different authors can be found in the mathematical physics literature (see e.g. \cite{IvanovPapadopoulos}), but we have included a (dimension independent) proof using \eqref{eq:killing} to illustrate our framework. The novelty here is that, independently of the dimension, metrics with 
parallel spinors are natural objects in generalized geometry; something which does not seem to have been pointed out before (see \cite{GF} for the case of $SU(3)$-holonomy metrics).

\begin{proposition}\label{prop:specialholonomy}
Let $E$ be an exact Courant algebroid over a smooth compact spin manifold $M$. Then, a solution $(V_+,div^\varphi)$ of the Killing spinor equations \eqref{eq:killing} with $\varphi$ exact is equivalent to a Riemannian metric with a parallel spinor.
\end{proposition}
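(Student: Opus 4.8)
The plan is to prove the asserted equivalence in two directions, the content being the implication ``solution $\implies$ parallel spinor''. In the easy direction, suppose the compact spin manifold $M$ carries a Riemannian metric $g$ with a $\nabla^g$-parallel spinor $\eta$. Take for $E$ the split exact Courant algebroid $T \oplus T^*$ with $H = 0$, let $V_+ = \{X + g(X) : X \in T\}$ be the associated Riemannian generalized metric, and set $div^\varphi = div^0$, so that $\varphi = 0$ is (trivially) exact. By \eqref{eq:nablapm} and \eqref{eq:nabla+3} all the connections $\nabla^\pm$, $\nabla^{\pm 1/3}$ reduce to $\nabla^g$, so the explicit form \eqref{eq:killingexp} of the Killing spinor equations reads $\nabla^g\eta = 0$ and $\slashed D^g\eta = 0$; the first is the hypothesis and implies the second, so $(V_+, div^0)$ solves \eqref{eq:killing}.

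For the main direction, let $(V_+, div^\varphi)$ be a solution of \eqref{eq:killing} with $\varphi = 2 d\phi$ exact (cf. Remark \ref{rem:dilaton}). Using the splitting of $E$ determined by $V_+$ and the computations of Section \ref{subsec:example}, I rewrite the equations in the explicit form \eqref{eq:killingexp}: there is a nowhere-vanishing spinor $\eta$ on $M$ with $\nabla^+\eta = 0$ and $\slashed \nabla^{1/3}\eta + \tfrac{1}{4}\varphi\cdot\eta = 0$. Since $\nabla^+$ is metric, $|\eta|$ is constant and I normalise it to $1$. Recalling that the Dirac operator of a metric connection on $(M,g)$ with skew torsion $\tau \in \Gamma(\Lambda^3 T^*)$ equals $\slashed D^g + \tfrac{3}{4}\tau\cdot$ \cite{FrIv}, I apply this to the $\nabla^+$-parallel $\eta$ (torsion $\tau = H$) to get $\slashed D^g\eta = -\tfrac{3}{4}H\cdot\eta$, while the second equation of \eqref{eq:killingexp} together with $\slashed \nabla^{1/3} = \slashed D^g + \tfrac{1}{4}H\cdot$ gives $\slashed D^g\eta = -\tfrac{1}{4}H\cdot\eta - \tfrac{1}{4}\varphi\cdot\eta$; subtracting yields the algebraic constraint $H\cdot\eta = d\phi\cdot\eta$. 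Moreover, since $V_+$ is positive definite, Proposition \ref{prop:Ricciflat} forces $Ric^+ = 0$; decomposing \eqref{eq:Riccipmexact} into its symmetric and skew parts, and using that for the exact form $\varphi = 2 d\phi$ the symmetric part of $\nabla^+\varphi$ is the Hessian $2\,\mathrm{Hess}^g\phi$, this becomes $Ric_g = \tfrac{1}{4}H\circ H + \tfrac{1}{2}\mathrm{Hess}^g\phi$ together with a relation expressing $d^*H$ as a multiple of $\iota_{\nabla\phi}H$. Taking the $g$-trace of the first identity and integrating over the compact $M$ (so that the Laplacian term integrates to zero) shows $\int_M s_g\,\vol_g = c\int_M \tr_g(H\circ H)\,\vol_g$ with $c > 0$.

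The vanishing is then obtained by a Bochner-type argument. One feeds $\slashed D^g\eta = -\tfrac{3}{4}H\cdot\eta$ and $\nabla^g_X\eta = -\tfrac{1}{4}(\iota_X H)\cdot\eta$ (both consequences of $\nabla^+\eta = 0$) into the Lichnerowicz formula $(\slashed D^g)^2 = (\nabla^g)^*\nabla^g + \tfrac{1}{4}s_g$, pairs with $\eta$, integrates over $M$, and simplifies using the algebraic constraint $H\cdot\eta = d\phi\cdot\eta$ and its Clifford multiples; after the Clifford bookkeeping this collapses to an identity of the form $\int_M(\text{non-negative density})\,\vol_g = 0$, forcing $H = 0$ and $d\phi = 0$. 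In particular $\varphi = 0$, the \v Severa class of $E$ vanishes, the connections $\nabla^+$, $\nabla^{1/3}$, $\nabla^g$ all coincide, and $\nabla^+\eta = 0$ now says exactly that the metric induced by $V_+$ carries the parallel spinor $\eta$, which closes the equivalence. I expect this last step to be the main obstacle: extracting a manifestly sign-definite integral requires careful tracking of the symmetry or antisymmetry of Clifford multiplication by forms of each degree and of how the constraint $H\cdot\eta = d\phi\cdot\eta$ interacts with the Lichnerowicz terms. Compactness is essential — local solutions with $H \neq 0$ are abundant — and this step is precisely the content of the classical vanishing theorems (see \cite{IvanovPapadopoulos} and references therein), which our framework reproves independently of $\dim M$.
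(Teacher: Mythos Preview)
Your easy direction matches the paper exactly, and your setup for the main direction (rewriting \eqref{eq:killing} as \eqref{eq:killingexp}, invoking Proposition \ref{prop:Ricciflat} to get $Ric^+=0$, and deriving the constraint $H\cdot\eta = d\phi\cdot\eta$) is correct and follows the same route as the paper up to the point where you trace the Ricci identity.

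The gap is precisely where you say it is: the ``Clifford bookkeeping'' you defer is the whole content of the result, and as written you have not produced a sign-definite integrand. In fact a naive integration fails. If you add the traced Ricci identity to what comes out of a Lichnerowicz formula, the scalar identity one actually obtains is
\[
|H|^2 + \tfrac{3}{2}\Delta_g\phi - |d\phi|^2 = 0,
\]
and integrating this over $M$ gives $\int_M |H|^2 = \int_M |d\phi|^2$, which is \emph{not} sign-definite; your proposed use of the constraint $H\cdot\eta = d\phi\cdot\eta$ does not repair this, since pairing with $\eta$ only relates $|H\cdot\eta|^2$ to $|d\phi|^2$, not $|H|^2$ to $|d\phi|^2$.

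The paper's proof closes this gap by two specific devices you do not mention. First, instead of the classical Lichnerowicz formula for $\slashed D^g$, it uses Bismut's Lichnerowicz-type formula for the cubic Dirac operator $\slashed\nabla^{1/3}$ \cite{Bismut}, which for a solution of \eqref{eq:killingexp} gives directly a second scalar identity $-S_g - 2\Delta_g\phi + |d\phi|^2 + \tfrac{1}{2}|H|^2 = 0$; this bypasses entirely the Clifford manipulations you anticipate. Second, to extract positivity from the combined identity above, the paper multiplies through by the weight $e^{\alpha\phi}$ and uses $\Delta_g(e^{\alpha\phi}) = \alpha e^{\alpha\phi}\Delta_g\phi - \alpha^2 e^{\alpha\phi}|d\phi|^2$ to absorb the bad $-|d\phi|^2$ term into a total Laplacian; at $\alpha=1$ one obtains
\[
3\Delta_g(e^{\phi}) + e^{\phi}|d\phi|^2 + 2e^{\phi}|H|^2 = 0,
\]
whose integral forces $H=0$ and $d\phi=0$. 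This exponential-weight trick is the missing idea in your outline and is what makes the argument dimension-independent.
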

\begin{proof}
The proof follows by explicit calculations. Denote by $g$ and $H$ the Riemannian metric and the closed $3$-form in the splitting $E \cong T \oplus T^*$ induced by $V_+$. By assumption $\varphi = 2d \phi$ in \eqref{eq:killing}, and by Proposition \ref{prop:Ricciflat} $(V_+,div^\varphi)$ is Ricci flat \eqref{eq:Ricciflat}. 
%
Contracting with $g$ in the formula for $Ric^+$ \eqref{eq:Riccipmexact} we obtain
$$
S_g - \frac{3}{2} |H|^2 + \frac{1}{2}\Delta_g\phi = 0,
$$
where we are using the conventions $|H|^2 = g^{ij}g^{kl}g^{st}H_{iks}H_{jlt}/6$ and $\Delta_g = d^*d$. Using now the Killing spinor equation \eqref{eq:killingexp} and Bismut's Lichnerowicz-type formula for the cubic Dirac operator \cite{Bismut} (
note the different sign convention for the Clifford algebra) 
$$
0 = 4(\slashed \nabla^{\frac{1}{3}} + \varphi/4 \cdot)^2 + 4(\nabla^+)^*\nabla^+) = - S_g - 2\Delta_g\phi  + |d\phi|^2 + \frac{1}{2}|H|^2.
$$
Adding the two scalar equations we obtain 
$$
|H|^2 + \frac{3}{2}\Delta_g \phi - |d\phi|^2 = 0.
$$
However, for any $\alpha\in \mathbb{R}$, and for $\Delta=d^* d$,
\begin{eqnarray*}
\Delta_g (e^{\alpha \phi})=-\alpha^2e^{\alpha\phi}|d\phi|^2+\alpha e^{\alpha\phi}\Delta_g \phi,
\end{eqnarray*}
and so
\begin{eqnarray*}
3\Delta(e^{\alpha\phi}) + \alpha(3\alpha-2)e^{\alpha\phi}|d\phi|^2 + 2\alpha e^{\alpha\phi}|H|^2 =0. 
\end{eqnarray*}
We can take $\alpha=1$ and integrate over $M$ to conclude that $H=0$ and $d\phi=0$. Finally, using the first equation in \eqref{eq:killingexp} it follows that $\nabla^g \eta = 0$. The converse is trivial, by taking the standard Courant algebroid $E = T \oplus T^*$ with $H = 0$ and $div$ the Riemannian divergence (see \eqref{eq:divergenceex}).
\end{proof}

An alternative treatment of special holonomy metrics using generalized geometry can be found in the work of Witt \cite{Witt}, via the study of $G \times G$-structures on exact Courant algebroids. In this alternative approach, a special holonomy metric is regarded (point-wise) as a diagonal structure $G \subset G \times G \subset O(n,n)$. We note however that an arbitrary Courant algebroid isomorphism may destroy the diagonal property, producing a general $G \times G$-structure. To the knowledge of the author, the Killing spinor equations \eqref{eq:killingexp} provide the first known `embedding' of special holonomy metrics onto generalized geometry. We will see some potential applications of this result in the next section.
In the case that $\varphi$ is closed but not exact (see Example \ref{example:dirac}) there are interesting solutions of the Killing spinor equations \eqref{eq:killing} in exact Courant algebroids, with $H \neq 0$ and $\varphi \neq 0$ \cite{grts}. 

More generally, for transitive Courant algebroids obtained by reduction, it has been proved in \cite{grt} that the system \eqref{eq:killing} corresponds to the Hull-Strominger system of partial differential equations (in the sense of \cite{GF2018,grts}). We postpone any further explanation about this interesting subject to Section \ref{subsec:transitive}, where we will study the behaviour of these equations under T-duality.

\begin{remark}\label{rem:killingphys}
In low dimensions, the equations \eqref{eq:killingexp} are equivalent to the \emph{killing-spinor equations} in compactifications of the common sector of ten-dimensional supergravity \cite{FrIv}, which may serve as motivation for our name. This particular way of writting the equations--which seems to be crucial for the relation with generalized geometry--was first considered in \cite{GrMPrTo} in the context of type II supergravity (in order to obtain an expression which is half-independent of the Ramond-Ramond fields). 
\end{remark}

\begin{remark}\label{rem:killingname}
When $E$ is exact, a killing spinor in the sense of Definition \ref{def:killing} is a hybrid between the notions of parallel spinor 
and harmonic spinor for connections with skew-symmetric torsion. 
The name \emph{Killing spinor} for a solution of the equations \eqref{eq:killing} may be misleading, as there is a well-established notion of Killing spinor in pseudo-Riemannian geometry. 
The alternative given by \emph{generalized Killing spinor} does not do any better, according to the classical notion by Friedrich, Kim, B\"ar, Gauduchon, and Moroianu. Despite this, in the most interesting examples (see Section \ref{subsec:transitive}) the equations \eqref{eq:killing} are motivated by physics and have a well-established name. 
\end{remark}

\section{T-duality in generalized geometry}\label{sec:dualities}

\subsection{T-duality and main result}\label{subsec:result}

We go now for the mainstream of our development. In this section we investigate a method to produce new solutions of the partial differential equations introduced in Section \ref{sec:flow} based on T-duality. The idea has its origins in the string theory literature in the work of Buscher \cite{Buscher1}, and was developed further by Ro\v cek and Verlinde \cite{RoVer}. 
Our main theorem builds on an important result by Cavalcanti and Gualtieri \cite{CaGu}, which states that \emph{topological T-duality} for principal torus bundles--as defined by Bouwknegt, Evslin and Mathai \cite{BEM}--induces suitable Courant algebroid isomorphisms, that we shall call \emph{dualities} in this paper.

We start introducing the notion of T-duality which we shall consider. Our Definition \ref{def:duality} below is a straightforward generalization of the main implication of 
\cite[Theorem 3.1]{CaGu}. Let $E$ be a Courant algebroid over a smooth manifold $M$, with anchor $\pi_E \colon E \to TM$. Assume that a torus $T$ acts on $M$ freely and properly, so that $M$ is a principal $T$-bundle over $M/T = B$, and that the action lifts to $E$ preserving the Courant algebroid structure. Then, we say that $(E,M,T)$ is an \emph{equivariant Courant algebroid} with base $B$. The \emph{simple reduction} of $E$ by $T$, which we denote
$$
E/T \to B,
$$
is a Courant algebroid over $B$, with anchor $\pi_{E/T} \colon E/T \to TB$, whose sheaf of sections is given by the invariant section of $E$, that is, $\Gamma(E/T) = \Gamma(E)^T$. 


\begin{definition}\label{def:duality}
Let $(E,M,T)$ and $(\hat E, \hat M, \hat T)$ be equivariant Courant algebroids over the same base $M/T = B = \hat M/\hat T$. We say that $(E,M,T)$ is dual to $(\hat E, \hat M, \hat T)$ if there exists an isomorphism of Courant algebroids between the simple reductions
$$
\psi \colon E/T \to \hat E/\hat T.
$$
In this situation, we will call $\psi$ the \emph{duality isomorphism}.
\end{definition}

The aim of this section is to understand the interplay between T-duality, in the sense of the previous definition, and the existence of solutions of the Ricci flow and the Killing spinor equations introduced in Section \ref{sec:flow}. For this, we need to introduce a notion of duality for pairs $(V_+,div)$, given by a generalized metric and a divergence operator. 

Given an equivariant Courant algebroid $(E,M,T)$ and a $T$-invariant generalized metric $V_+$ on $E$, we can push-forward $V_+$ along the bundle projection $p \colon M \to B$ to obtain a generalized metric with the same signature on the simple reduction 
$$
p_*V_+ \subset E/T.
$$
This way we obtain an identification between $T$-invariant generalized metrics on $E$ and generalized metrics on $E/T$. Similarly, given a $T$-invariant divergence operator $div \colon \Gamma(E) \to C^\infty(M)$ it induces a first-order differential operator 
$$
p_*div \colon \Gamma(E)^T = \Gamma(E/T) \to C^\infty(M)^T = C^\infty(B).
$$
The $\pi_E$-Leibniz rule for $div$ restricted to $p^*C^\infty(B)$ (see \eqref{eq:Leibnizdiv}), is precisely the $\pi_{E/T}$-Leibniz rule for $p_*div$, which thus defines a divergence operator on $E/T$.

\begin{definition}\label{def:dualpairs}
Let $(E,M,T)$ and $(\hat E, \hat M, \hat T)$ be dual equivariant Courant algebroids in the sense of Definition \ref{def:duality}. We will say that a $T$-invariant pair $(V_+,div)$ on $E$ is dual to a $\hat T$-invariant pair $(\hat V_+,\hat{div})$ on $\hat E$ if
$$
\psi (p_* V_+) = \hat p_* \hat V_+ \qquad \textrm{and} \qquad p_* div = \psi^* \hat p_* \hat{div},
$$
where $p \colon M \to B$ and $\hat p \colon \hat M \to B$ are the bundle projections.
\end{definition}

The next result shows that the Ricci tensor in Definition \ref{def:Ricci} is naturally exchanged under T-duality.

\begin{proposition}\label{prop:duality}
Let $(E,M,T)$ and $(\hat E, \hat M, \hat T)$ be equivariant dual Courant algebroids, endowed with dual invariant pairs $(V_+,div)$ on $E$ and $(\hat V_+,\hat{div})$ on $\hat E$, in the sense of Definition \ref{def:dualpairs}. Then
\begin{equation}\label{eq:Riccidual}
p_* Ric^\pm(V_+,div) = \psi^*\hat p_* Ric^\pm(\hat V_+,\hat{div}),
\end{equation}
where $Ric^\pm$ are as in Definition \ref{def:Ricci}.
\end{proposition}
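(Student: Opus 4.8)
The plan is to deduce the statement from two facts that are independent of T-duality. The \emph{reduction fact}: for an equivariant Courant algebroid $(E,M,T)$ with $T$-invariant pair $(V_+,div)$, the push-forward $p_* Ric^\pm(V_+,div)$ equals the Ricci tensor $Ric^\pm(p_* V_+, p_* div)$ of the reduced pair on the simple reduction $E/T \to B$. The \emph{naturality fact}: the Ricci tensor of Definition~\ref{def:Ricci} is preserved by Courant algebroid isomorphisms, i.e. $Ric^\pm(\psi^* \hat V, \psi^* \hat{div}) = \psi^* Ric^\pm(\hat V, \hat{div})$. Granting these, the proof is a three-step composition: by the reduction fact $p_* Ric^\pm(V_+,div) = Ric^\pm(p_* V_+, p_* div)$ and $\hat p_* Ric^\pm(\hat V_+, \hat{div}) = Ric^\pm(\hat p_* \hat V_+, \hat p_* \hat{div})$; by Definition~\ref{def:dualpairs} $\psi^*(\hat p_* \hat V_+) = p_* V_+$ and $\psi^*(\hat p_* \hat{div}) = p_* div$; and by the naturality fact $\psi^* Ric^\pm(\hat p_* \hat V_+, \hat p_* \hat{div}) = Ric^\pm(p_* V_+, p_* div)$. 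Chaining these identities yields \eqref{eq:Riccidual}.

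To establish the reduction fact I would first produce a $T$-invariant element $D \in \cD(V_+,div)$. This space is a non-empty affine space (Proposition~\ref{prop:existence} together with the argument of Lemma~\ref{lem:weylfixed}) modelled on a vector space of tensors, and $T$ is compact and acts preserving the Courant structure, the generalized metric and the divergence; hence averaging any element of $\cD(V_+,div)$ over $T$ produces an invariant $D$. Such a $D$ restricts to the invariant sections $\Gamma(E/T) = \Gamma(E)^T$ and defines a generalized connection $\bar D$ on $E/T$. One checks that $\bar D$ is compatible with the pairing and with $p_* V_+$, is torsion-free, and has $div_{\bar D} = p_* div$: each defining condition --- the Leibniz rule, formula \eqref{eq:torsionG}, the trace defining the divergence --- involves only the bracket, anchor and pairing, all of which restrict to invariant sections, and one uses that an invariant orthogonal local frame of $E$ over the preimage of an open subset of $B$ descends to an orthogonal local frame of $E/T$, together with $C^\infty(B) = C^\infty(M)^T$. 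Thus $\bar D \in \cD(p_* V_+, p_* div)$. Comparing \eqref{eq:curvature} on $E$ and on $E/T$ for invariant arguments shows that $R^\pm_{\bar D}$ is the push-forward of $R^\pm_D$, and taking traces in an invariant frame gives $Ric^\pm_{\bar D} = p_* Ric^\pm_D$; by Definition~\ref{def:Ricci} this reads $Ric^\pm(p_* V_+, p_* div) = p_* Ric^\pm(V_+, div)$.

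The naturality fact is essentially formal. A Courant algebroid isomorphism $\psi$ transports a torsion-free generalized connection compatible with a generalized metric and with prescribed divergence to one enjoying the same properties on the target, since each of these conditions is phrased purely in terms of the Courant algebroid structure. Its curvatures \eqref{eq:curvature} and hence its Ricci tensors \eqref{eq:Ricci} are transported accordingly, which gives $Ric^\pm(\psi^* \hat V, \psi^* \hat{div}) = \psi^* Ric^\pm(\hat V, \hat{div})$.

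I expect the reduction fact to be the main obstacle, and within it the verification that $div_{\bar D} = p_* div$, since the divergence is defined by a trace against an orthogonal frame and one must argue carefully that invariant frames may be used and that the identification $C^\infty(B) = C^\infty(M)^T$ intertwines the two $\pi$-Leibniz rules (for $\pi_E$ and $\pi_{E/T}$). The existence of invariant connections via averaging over the compact torus is the enabling point; once it is in place, the remaining verifications with \eqref{eq:torsionG}, \eqref{eq:curvature} and \eqref{eq:Ricci} are routine.
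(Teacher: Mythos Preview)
Your proposal is correct and follows essentially the same approach as the paper: both establish that an invariant element of $\cD(V_+,div)$ exists, identify it with an element of $\cD(p_*V_+,p_*div)$ on the simple reduction (the paper isolates this as a separate Lemma showing $\cD_E^T = \cD_{E/T}$), and then invoke naturality under the Courant isomorphism $\psi$. The only minor tactical difference is that you obtain the invariant connection by averaging over the compact torus, whereas the paper constructs it explicitly by taking a $T$-invariant torsion-free $V_+$-compatible connection and adjusting by invariant Weyl endomorphisms $\chi_\pm^{e^\pm}$ to fix the divergence; both are valid and lead to the same conclusion.
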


The proof of Proposition \ref{prop:duality} requires to understand the relation between $T$-invariant generalized connections on $E$ and generalized connections on $E/T$. Denote by $\cD_E^T \subset \cD_E$ the space of $T$-invariant generalized connections on $E$ and by $\cD_{E/T}$ the space of generalized connections on the simple reduction $E/T$.

\begin{lemma}\label{lem:Gconnec}
There is a canonical identification $\cD_E^T = \cD_{E/T}$. In particular, $\cD_E^T$ is non-empty. 
\end{lemma}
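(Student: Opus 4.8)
The plan is to produce the identification $\cD_E^T \cong \cD_{E/T}$ by restriction to invariant sections, mirroring the way the bracket, anchor and pairing on $E/T$ are defined from the invariant data on $E$. First I would observe that a $T$-invariant generalized connection $D$ on $E$ maps invariant sections to invariant sections: since the $T$-action preserves the Courant structure and commutes with $D$, for $e_1, e_2 \in \Gamma(E)^T$ the Leibniz rule \eqref{eq:generalizedconn} together with $\pi_E$-equivariance forces $D_{e_1}e_2$ to be $T$-invariant as well. Hence $D$ restricts to an operator
$$
\overline D \colon \Gamma(E/T) \to \Gamma((E/T)^* \otimes E/T),
$$
and the two defining properties in \eqref{eq:generalizedconn} for $\overline D$ follow directly from those for $D$, using that $\Gamma(E/T) = \Gamma(E)^T$, that $C^\infty(B) = C^\infty(M)^T$, that the anchor of $E/T$ is $\pi_E$ restricted to invariant sections, and that the pairing on $E/T$ is the restriction of $\la\cdot,\cdot\ra$. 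So $D \mapsto \overline D$ gives a map $\cD_E^T \to \cD_{E/T}$.

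Next I would construct the inverse. Given $\overline D \in \cD_{E/T}$, I want to define a $T$-invariant generalized connection $D$ on $E$ whose restriction to invariant sections is $\overline D$. The natural way is to work locally: over a trivializing open set $U \subseteq B$ with $p^{-1}(U) \cong U \times T$, the invariant sections generate $\Gamma(E)|_{p^{-1}(U)}$ as a $C^\infty(p^{-1}(U))$-module, and the anchor of an invariant section of $E$ lifts the corresponding vector field on $B$ plus possibly vertical (fundamental) components. One then extends $\overline D$ by the Leibniz rule \eqref{eq:generalizedconn}: for $f_i \in C^\infty(M)$ and invariant $s_i \in \Gamma(E)^T$, set
$$
D_{f_1 s_1}(f_2 s_2) = f_1 \pi_E(s_1)(f_2)\, s_2 + f_1 f_2\, \overline D_{s_1} s_2 .
$$
I would check this is well defined (independent of the chosen presentation in terms of invariant frames), is $T$-invariant by construction, satisfies the two axioms of a generalized connection, and restricts to $\overline D$ on invariant sections; compatibility of the local definitions on overlaps then yields a global $D \in \cD_E^T$. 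The two assignments are mutually inverse essentially by construction, giving the canonical bijection. Non-emptiness of $\cD_E^T$ is then immediate since $\cD_{E/T}$ is non-empty (any Courant algebroid admits a generalized connection, e.g. via \eqref{eq:generalizedconnexp} applied to $E/T$).

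The main obstacle I expect is the well-definedness and $C^\infty(M)$-locality of the extension step: one must verify that the connection axioms force the extension from invariant sections to arbitrary sections to be unique, and that the formula above does not depend on how a general section is written as a $C^\infty(M)$-combination of invariant ones — equivalently, that the prescribed value on invariant sections together with the Leibniz rule is consistent. This hinges on the fact that $E \to M$ is, locally over $B$, pulled back from $E/T \to B$ in a way compatible with the Courant structure (this is precisely what makes the simple reduction a Courant algebroid), so that there are "enough" invariant sections and the fundamental vector fields of the $T$-action are accounted for by the anchor. I would also note the mild subtlety that one should phrase things at the level of sheaves over $B$: $\Gamma(E/T)$ is the sheaf $U \mapsto \Gamma(p^{-1}(U), E)^T$, and the identification is an equality of sheaves of generalized connections, which automatically handles the patching.
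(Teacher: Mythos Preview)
Your proposal is correct and follows essentially the same approach as the paper: the core idea in both is that $E$ is locally generated over $C^\infty(M)$ by $T$-invariant sections, so a generalized connection on $E/T$ extends uniquely to a $T$-invariant one on $E$ via the Leibniz rule, and compatibility with the pairing is checked directly. The only notable difference is that the paper shortcuts the construction of the inverse map: instead of building both directions explicitly as you do, it observes that $\cD_E^T$ and $\cD_{E/T}$ are affine spaces modelled on the \emph{same} vector space $\Gamma(E^* \otimes \mathfrak{o}(E))^T$, so it suffices to exhibit the single map $\cD_{E/T} \to \cD_E^T$ (your extension step) to conclude they coincide.
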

\begin{proof}
The space $\cD_{E/T}$ is non-empty, since we can construct a generalized connection $\check D$ out of an orthogonal connection $\nabla^{E/T}$ on $E/T$ by 
$$
\check D_{e_1}e_2 = \nabla^{E/T}_{\pi_{E/T}(e_1)}e_2.
$$ 
The affine spaces $\cD_E^T$ and $\cD_{E/T}$ are both modelled on $\Gamma(E^* \otimes \mathfrak{o}(E))^T$, via the identification $\Gamma(E/T) = \Gamma(E)^T$, and therefore it is enough to prove that a generalized connection on $E/T$ induces an invariant connection on $E$. For this, we note that $E$ is locally generated by $\Gamma(E)^T$, and therefore given $\check D \in \cD_{E/T}$ we can define a generalized connection $D$ on $E$ by extending its action from $\Gamma(E)^T$ to $\Gamma(E)$ imposing the Leibniz rule with respect to the anchor $\pi_E$. The generalized connection $D$ is compatible with the pairing, as for $f \in C^\infty(M)$ and $e_1,e_2 \in \Gamma(E)^T$ we have
\begin{align*}
\pi_E(e_1)\la f e_2, e_3 \ra & = df(\pi_E(e_1)) \la e_2, e_3 \ra + f \pi_{E/T}(e_1)(\la e_2, e_3 \ra)\\
& = \la df(\pi_E(e_1)) + f \check D_{e_1}e_2,e_3\ra + f\la e_2,\check D_{e_1}e_3 \ra\\
& = \la D_{e_1}(fe_2),e_3\ra + \la e_2,D_{e_1}e_3 \ra.
\end{align*}
For the first equality we have used that $\la e_2, e_3 \ra$ is the pull-back of a function on $B$ and consequently
$$
\pi_E(e_1)\la e_2, e_3 \ra = \pi_{E/T}(e_1)\la e_2, e_3 \ra.
$$
\end{proof}

We are ready for the proof of Proposition \ref{prop:duality}.

\begin{proof}[Proof of Proposition \ref{prop:duality}]
Given a $T$-invariant divergence operator $div$ and a $T$-invariant torsion-free generalize connection $D$ on $E$ compatible with $V_+$ (see Proposition \ref{prop:existence}), we have
$$
div - div_D = \la e' , \cdot \ra \in \Gamma(E^*)^T.
$$
Considering $D' = D + \chi^{e^+}_+ + \chi^{e^-}_-$ with $\chi^{e^\pm}_\pm$ as in \eqref{eq:decompositionchipm} and setting 
$$
(e')^\pm = (r_\pm - 1)e^\pm,
$$
we obtain $div_{D'} = div$. Thus, the affine subspace $\cD(V_+,div)^T \subset \cD(V_+,div)$ of $T$-invariant generalized connections is non-empty. Furthermore, by Lemma \ref{lem:Gconnec} there is a canonical identification
$$
\cD(V_+,div)^T = \cD(p_* V_+,p_* div).
$$
Applying Lemma \ref{propo:Riccitorsion} and Proposition \ref{prop:weylfixed}, this implies
$$
p_* Ric^\pm(V_+,div) = Ric^\pm(p_* V_+,p_* div).
$$
The result follows now from
\begin{equation}\label{eq:Gconnecdual}
\{\psi_*D \; | \; D \in \cD(V_+,div)^T\} = \cD(\hat p_*  \hat V_+,\hat p_* \hat{div})
\end{equation}
which implies
$$
\psi^* Ric^\pm(\hat p_*\hat V_+,\hat p_* \hat{div}) = Ric^\pm(p_*  V_+,p_* div).
$$
\end{proof}

We state next our main result, which follows as a consequence of the proof of Proposition \ref{prop:duality} and equation \eqref{eq:Riccidual}.

\begin{theorem}\label{th:duality}
Let $(E,M,T)$ and $(\hat E, \hat M, \hat T)$ be dual equivariant Courant algebroids. Then:

\begin{enumerate}[i)]
\item If $(G_t,div_t)$ is an invariant solution of the Ricci flow \eqref{eq:Ricciflowpm} on $E$, and $(\hat G_t,\hat{div}_t)$ is a family of invariant dual pairs on $\hat E$, then $(\hat G_t,\hat{div}_t)$ is also a solution of the Ricci flow.

\item If $(G,div)$ is an invariant solution of the Killing spinor equations \eqref{eq:killing} on $E$, and $(\hat G,\hat{div})$ is an invariant dual pair on $\hat E$, then $(\hat G,\hat{div})$ is also a solution of the Killing spinor equations.
\end{enumerate}
\end{theorem}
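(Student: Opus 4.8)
The plan is to deduce Theorem \ref{th:duality} directly from Proposition \ref{prop:duality} together with the characterizations of the Ricci flow and the Killing spinor equations in terms of the canonical operators $D^+_-$, $\slashed D^+$, and $Ric^+$, all of which were shown to depend only on the pair $(V_+,div)$. The key point is that every geometric quantity entering the equations is already known to be reduction-invariant and hence transported by the duality isomorphism $\psi$.

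First I would treat part $i)$. Given the invariant solution $(G_t,div_t)$ on $E$ and the family of invariant dual pairs $(\hat G_t,\hat{div}_t)$ on $\hat E$, I apply Proposition \ref{prop:duality} fibrewise in $t$ to get
\begin{equation*}
p_* Ric^+(V_{+,t},div_t) = \psi^* \hat p_* Ric^+(\hat V_{+,t},\hat{div}_t).
\end{equation*}
Next I need to compare the left-hand sides $\partial_t G^+$ of the flow equation on $E$ and on $\hat E$. Since the generalized metrics $G_t$ and $\hat G_t$ are invariant and dual, $G_{t}\partial_t G_t$ and $\hat G_t \partial_t \hat G_t$ are invariant sections of $\mathfrak{o}(E)$ and $\mathfrak{o}(\hat E)$ that push forward to sections of $\mathfrak{o}(E/T)$ and $\mathfrak{o}(\hat E/\hat T)$ identified under $\psi$ (this is where the hypothesis that $(\hat G_t,\hat{div}_t)$ is a \emph{family} of dual pairs, i.e. dual for each $t$ with the duality data independent of $t$, is used: it lets me commute $\partial_t$ with $\psi$ and with the push-forward $p_*$, $\hat p_*$). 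Then $\partial_t G^+_t = -2 Ric^+_t$ on $E$ pushes forward, via $p_*$, to the same identity on $E/T$, which transports by $\psi$ to the identity $\partial_t \hat G^+_t = -2 \widehat{Ric}^+_t$ on $\hat E/\hat T$, and finally pulls back along $\hat p^*$ to the Ricci flow on $\hat E$. I would record that the decomposition of $G_t\partial_t G_t$ into its $V_-\to V_+$ and $V_+\to V_-$ blocks is compatible with push-forward, so that $p_*(\partial_t G_t^+) = \partial_t (p_* G_t)^+$.

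For part $ii)$, I would instead use that the operators $D^+_-$ and $\slashed D^+$ attached to a pair $(V_+,div)$ are, by Lemma \ref{lem:mixedfixed}, Lemma \ref{lem:dDpm} and Lemma \ref{lem:Gconnec}, determined by the reduced data on $E/T$: choosing a $T$-invariant $D\in\cD(V_+,div)$ (non-empty, as in the proof of Proposition \ref{prop:duality}), it descends to $\check D\in\cD(p_*V_+,p_*div)$, and the canonical operators computed from $\check D$ agree with the push-forwards of those computed from $D$. A $T$-invariant solution $\eta\in\Gamma(\mathcal S_+)$ of \eqref{eq:killing} on $E$ therefore descends to a solution on $E/T$; transporting along $\psi$ (which identifies the spin data $\mathcal S_+$ with $\hat{\mathcal S}_+$ because it identifies $V_+/T$ with $\hat V_+/\hat T$ isometrically) and pulling back along $\hat p$ gives a non-vanishing $\hat T$-invariant $\hat\eta\in\Gamma(\hat{\mathcal S}_+)$ with $\hat D^+_-\hat\eta = 0$ and $\slashed{\hat D}^+\hat\eta = 0$, i.e. a solution on $\hat E$. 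I would note that \eqref{eq:Gconnecdual} from the proof of Proposition \ref{prop:duality} is exactly what guarantees the operators match after applying $\psi$.

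The main obstacle I anticipate is bookkeeping the identification of spinor bundles under $\psi$: $\psi$ is a Courant algebroid isomorphism between the reductions, so it identifies $O(V_+/T)$-structures and hence, choosing compatible spin lifts, the twisted bundles $\mathcal S_+$ and $\hat{\mathcal S}_+$ — but one must check the Clifford actions intertwine and that the non-vanishing spinor is carried to a non-vanishing spinor, and that the divergence-twist $(\det S_\pm^*)^{1/r_{S_\pm}}$ is respected. This is essentially formal once one observes that $\psi$ preserves the pairing and the bracket, so the construction of $\slashed D^\pm$ in Section \ref{sec:ggdirac} is functorial under $\psi$; the only subtlety is the (standard) ambiguity in choosing the spin lift, which does not affect the operators $\slashed D^\pm$ by the discussion following Theorem \ref{th:genoper}. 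Everything else reduces to Proposition \ref{prop:duality} and the invariance statements already proved.
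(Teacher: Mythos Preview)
Your proposal is correct and follows essentially the same approach as the paper: part $i)$ is deduced directly from Proposition \ref{prop:duality} (equation \eqref{eq:Riccidual}), while part $ii)$ proceeds by choosing a $T$-invariant connection in $\cD(V_+,div)$ via Lemma \ref{lem:Gconnec}, pushing the canonical operators $D^+_-$ and $\slashed D^+$ and the invariant spinor $\eta$ down to $E/T$, and then transporting everything through $\psi$ using \eqref{eq:Gconnecdual}. Your treatment is in fact slightly more explicit than the paper's in tracking the compatibility of $\partial_t G^+$ with push-forward and in flagging the spinor-bundle bookkeeping, but the underlying argument is the same.
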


\begin{proof}
The first part of the statement is a direct consequence of equation \eqref{eq:Riccidual}. As for the second part,   if $(V_+,div)$ is an invariant solution of the Killing spinor equations \eqref{eq:killing} then there exists an equivariant twisted spinor bundle $\mathcal{S}_+$ and a non-vanishing $\eta \in \Gamma(\mathcal{S}_+)^T$ satisfying \eqref{eq:killing}. The canonical operators $D^+_-$ and $\slashed D^+$ are constructed from an arbitrary choice of element $D \in \cD(V_+,div)$, that we can take to be $T$-invariant by Lemma \ref{lem:Gconnec} and the proof of Proposition \ref{prop:duality}, and thus these operators induce canonical operators $p_*D^+_-$ and $p_*\slashed D^+$ on $\Gamma(\mathcal{S}_+/T)$. Consequently, $p_* \eta \in \Gamma(\mathcal{S}_+/T)$ satisfy \eqref{eq:killing} and $(p_*  V_+,p_* div)$ is a solution of the Killing spinor equations on $E/T$. Finally, using the isomorphism $\psi$ in Definition \ref{def:duality}, we can regard $\mathcal{S}_+/T$ as a spinor bundle for $\hat p_* \hat V_+$ and $\eta$ as a non-vanishing section. Since the spaces of generalized connections in \eqref{eq:Gconnecdual} are exchanged under the isomorphism $\psi$, so are the operators $p_*D^+_-$ and $p_*\slashed D^+$. Therefore, $\cD(\hat p_*  \hat V_+,\hat p_* \hat{div})$ is also a solution of the Killing spinor equations. 
\end{proof}

A direct consequence of the previous theorem is that the stationary points of the flow, given by solutions of the Ricci-flat equation $Ric^+ = 0$, are exchanged under T-duality. A proof of this fact for transitive Courant algebroid obtained by reduction was provided in \cite{BarHek}, using the explicit form of the equations in this case (see \cite{GF} and Lemma \eqref{lem:Riccitr}). A proof of part i) of Theorem \ref{th:duality} for exact Courant algebroids was provided in \cite{Streets}, using an ad hoc definition of the Ricci tensor and explicit calculations.

We should mention that our proof also works when $T$ and $\hat T$ are substituted by arbitrary non-abelian groups, but we have not been able to find any interesting examples in this case. Based on our proof, we believe that Theorem \ref{th:Tduality} extends to a fairly general class of Poisson-Lie T-duals, in the sense of Klim\v c\'ik and \v Severa \cite{KlSevera}. We thank \v Severa for clarifications about the non-abelian setup.

It is interesting to notice that Proposition \ref{prop:specialholonomy} combined with Theorem \ref{th:duality} implies that special holonomy metrics with a continuous abelian group of isometries are preserved by T-duality, in the sense of Definition \ref{def:duality}. In this way, we have essentially recovered the observation of \cite{H2,Leung,LYZ,SYZ} that Calabi-Yau, $G_2$ and $Spin(7)$ metrics with torus symmetries are preserved by dualisation of the fibres. More interestingly, we believe that this is also true for the non-abelian Poisson-Lie T-duality \cite{KlSevera}. Even though compact manifolds with special holonomy have no continuous symmetries, it would be interesting to explore this perspective of the present work in the abundant local examples that exists in the literature (see e.g. \cite{Salamon}).

\subsection{Topological T-duality and the dilaton shift}\label{subsec:dilatonshift}

The aim of this section is to revisit some aspects of \emph{topological T-duality} for principal torus bundles, as defined in \cite{BEM} and further studied in \cite{CaGu}. 
The novelty here is to understand the notion of duality for pairs $(V_+,div)$ in Definition \ref{def:dualpairs} in this context, with the upshot of a mathematical explanation of the \emph{dilaton shift} in string theory, as originally observed by Buscher \cite{Buscher1} (see Remark \ref{rem:dilaton}). 

First, let us briefly summarise the basic definition following closely \cite{CaGu}. We are in the situation of the previous section with 
$(E,M,T)$, $(\hat E, \hat M,\hat T)$ a pair of equivariant Courant algebroids over a common base $B$. In this section we assume that $E$ and $\hat E$ are exact. We denote by 
$$
[H] \in H^3(M,\RR)^{T}, \qquad [\hat H] \in H^3(\hat M,\RR)^{\hat T}
$$ 
the \v Severa classes of $E$ and $\hat E$, respectively. Consider the fibre product $\overline M = M \times_B \hat M$ and the diagram
\begin{equation*}
  \xymatrix{
 & \ar[ld]_{\overline{p}} \overline M \ar[rd]^{\hat{\overline{p}}} & \\
 M \ar[rd]_{p} &  & \hat{M} \ar[ld]^{\hat{p}} \\
  & B & \\
  }
\end{equation*}
Following \cite{BEM}, we say that $(M,[H])$ is T-dual to $(\hat M,[\hat H])$ if there exists representants $H$ and $\hat H$ of the \v Severa classes such that
$$
\overline{p}^*H - \hat{\overline{p}}^* \hat H = d \overline{B},
$$
where $\overline{B} \in \Gamma(\Lambda^2 T^*\overline M)$ is a $T \times \hat T$-invariant $2$-form such that
$$
B \colon \Ker d \overline{p} \otimes \Ker d \hat{\overline{p}} \to \RR
$$
is non-degenerate. Typically $T$ and $\hat T$ are taken to be mutually dual tori, but this condition is irrelevant for our discussion. We now recall the salient implications of the T-duality relation \cite{BEM,CaGu}, in a way that is convenient for the present work.

\begin{theorem}[\cite{BEM,CaGu}]\label{th:Tduality}
If $(M,[H])$ and $(\hat M,[\hat H])$ are T-dual, then there exists an isomorphism of Courant algebroids between the simple reductions
$$
\psi \colon E/T \to \hat E/\hat T,
$$
and an isomorphism between the spaces of sections of the twisted spinor bundles
$$
\tau \colon \Gamma(\mathbb{S}) \to \Gamma(\hat{\mathbb{S}}),
$$
which exchanges the canonical Dirac generating operators in Theorem \ref{th:genoper} and which is compatible with Clifford multiplication, that is, 
$$
\tau(\slashed d_0 \alpha) = \hat{\slashed d_0}(\tau \alpha), \quad \textrm{and} \quad \tau(e \cdot \alpha) = \psi(e) \cdot \tau(\alpha)
$$ 
for all $e \in \Gamma(E)^{T}$ and $\alpha \in \Gamma(\mathbb{S})$.
\end{theorem}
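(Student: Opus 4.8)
The plan is to reduce the statement to the work of Cavalcanti and Gualtieri \cite{CaGu}, after matching the objects of Theorem \ref{th:genoper} with the ones appearing there. By Example \ref{example:dirac}, once invariant representatives $H \in [H]$ and $\hat H \in [\hat H]$ have been fixed, together with the corresponding isotropic splittings $E \cong TM \oplus T^*M$ and $\hat E \cong T\hat M \oplus T^*\hat M$, the twisted spinor bundle of Theorem \ref{th:genoper} is canonically $\mathbb{S} \cong \Lambda^* T^*M$ (the twists $(\det S^*)^{1/r_S}$ and $|\det T^*|^{1/2}$ cancelling), with canonical generating operator $\slashed d_0 = d - H\wedge$; similarly $\hat{\mathbb{S}} \cong \Lambda^* T^*\hat M$ with $\hat{\slashed d_0} = d - \hat H \wedge$. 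So it suffices to produce $\psi$ and $\tau$ with the stated intertwining properties, reading $\slashed d_0$ and $\hat{\slashed d_0}$ as these twisted de Rham differentials.

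First I would introduce the correspondence space $\overline M = M \times_B \hat M$, with its projections $\overline p \colon \overline M \to M$ and $\hat{\overline p} \colon \overline M \to \hat M$ ($\hat{\overline p}$ being a principal $T$-bundle and $\overline p$ a principal $\hat T$-bundle), and the invariant $2$-form $\overline B \in \Omega^2(\overline M)^{T \times \hat T}$ furnished by the T-duality relation, which satisfies $d\overline B = \overline p^* H - \hat{\overline p}^* \hat H$ and is non-degenerate on $\Ker d\overline p \otimes \Ker d\hat{\overline p}$. The transform is then (a sign-normalised version of) the fibrewise Fourier--Mukai (Hori) transform
\begin{equation*}
\tau(\alpha) = \hat{\overline p}_* \left( e^{\overline B} \wedge \overline p^* \alpha \right),
\end{equation*}
where $\hat{\overline p}_*$ denotes integration over the $T$-fibres of $\hat{\overline p}$; by \cite{CaGu} this is an isomorphism of $\mathbb{Z}_2$-graded vector spaces. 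For $\psi$ I would take the Courant algebroid isomorphism $\psi \colon E/T \to \hat E/\hat T$ constructed in \cite[Theorem~3.1]{CaGu}, which on invariant sections $\Gamma(E)^T \cong \Gamma(E/T)$ is induced by $\overline M$ and $\overline B$: an invariant section $X + \xi$ of $T \oplus T^*$ on $M$ is sent to the invariant section $\hat X + \hat\xi$ of $T \oplus T^*$ on $\hat M$ characterised by requiring $\overline p^* X$ and $\hat{\overline p}^* \hat X$ to agree up to vertical terms and $\hat{\overline p}^* \hat\xi = \overline p^* \xi + \iota_{\overline p^* X}\overline B$ modulo the relevant vertical ideals.

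It then remains to verify the two intertwining identities. For the differentials one combines $d(e^{\overline B}) = d\overline B \wedge e^{\overline B}$, the compatibility of $d$ with $\overline p^*$ and -- up to the sign dictated by $\dim T$, which I absorb into the normalisation of $\tau$ -- with the fibre integration $\hat{\overline p}_*$, together with $d\overline B = \overline p^* H - \hat{\overline p}^* \hat H$ and the invariance of $H$ and $\hat H$, to obtain $\tau\big( (d - H\wedge)\alpha \big) = (d - \hat H \wedge)(\tau\alpha)$, that is, $\tau(\slashed d_0 \alpha) = \hat{\slashed d_0}(\tau\alpha)$; this is precisely the computation of \cite{CaGu}. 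For Clifford multiplication, writing an invariant $e = X + \xi$ so that $e \cdot \alpha = \iota_X \alpha + \xi \wedge \alpha$, one tracks how $\iota_X$ and $\xi \wedge$ pass through $\overline p^*$, through $e^{\overline B} \wedge$ (the contraction producing the correction term $\iota_X \overline B$), and through $\hat{\overline p}_*$ (splitting $X$ into horizontal and fibrewise parts via principal connections on $M \to B$ and $\hat M \to B$, and using that the integral along a fibre of a contraction by a vertical vector is a boundary term), arriving at $\tau(e \cdot \alpha) = \psi(e) \cdot \tau(\alpha)$ for $e \in \Gamma(E)^T$. That $\psi$ is a morphism of Courant algebroids on the reductions then follows either from \cite[Theorem~3.1]{CaGu} directly, or \emph{a posteriori} from the spinorial picture: since $\slashed d_0$ generates the anchor and bracket of $E/T$ through the graded commutators of Definition \ref{def:genoper}, and $\tau$ conjugates $\slashed d_0$ into $\hat{\slashed d_0}$ and the Clifford action into the Clifford action, the identities $[[\slashed d_0, f], e\cdot] = \pi(e)(f)$ and $[[\slashed d_0, e_1\cdot], e_2\cdot] = [e_1, e_2]\cdot$ transport to $\hat E/\hat T$, forcing $\psi$ to intertwine the anchors and the brackets.

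The main obstacle is the Clifford-compatibility identity. The differential identity is a clean application of Stokes' theorem and the Leibniz rule, but the Clifford one requires a judicious choice of principal connections on $M \to B$ and $\hat M \to B$, the attendant splitting of invariant sections of $T \oplus T^*$ into horizontal and fibrewise parts, and precise bookkeeping both of the correction terms $\iota_X \overline B$ and of the signs generated when commuting $\iota_X$ past the fibre integration $\hat{\overline p}_*$. One must also check throughout that $\tau$ and $\psi$ descend to genuine maps of the reduced objects over $B$; this is automatic, since $\overline B$, $H$ and $\hat H$ are invariant and everything is assembled from invariant data, but it is exactly the point at which the translation of \cite{CaGu} into the equivariant language of simple reductions used here must be carried out with care.
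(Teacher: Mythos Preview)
The paper does not give its own proof of this theorem: it is stated as a result from the literature, attributed to \cite{BEM,CaGu}, and no argument follows. So there is nothing in the paper to compare your proposal against.

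That said, your sketch is a faithful outline of the argument in \cite{CaGu}. You correctly identify, via Example~\ref{example:dirac}, that $\mathbb{S} \cong \Lambda^* T^*M$ with $\slashed d_0 = d - H\wedge$, so that the abstract statement of Theorem~\ref{th:Tduality} reduces to the concrete one in \cite{CaGu}; you correctly take $\tau$ to be the Hori/Fourier--Mukai transform $\hat{\overline p}_*(e^{\overline B}\wedge \overline p^*\,\cdot\,)$ and $\psi$ to be the Courant isomorphism of \cite[Theorem~3.1]{CaGu}; and the two intertwining identities you highlight are exactly the content of that theorem. Your closing observation---that the Courant-algebroid property of $\psi$ can be deduced \emph{a posteriori} from the spinorial intertwining via Definition~\ref{def:genoper}---is a nice alternative to the direct check in \cite{CaGu}. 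The only point to watch is the sign/parity convention in fibre integration (you flag this yourself), which in \cite{CaGu} is absorbed into the definition of $\tau$ and affects only the $\mathbb{Z}_2$-grading shift.
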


Recall from Example \ref{example:dirac} that $(\Gamma(\mathbb{S}),\slashed d_0)$ corresponds to the differential complex $(\Gamma(\Lambda^*T^*M),d_H)$, with $d_H$ the twisted de Rham differential.
As a direct consequence of the previous theorem, topological T-duals provide a strong version of T-duals in the sense of Definition \ref{def:duality}. An interesting difference between Theorem \ref{th:Tduality} and Definition \ref{def:duality} is that $T$-duality--given by the isomorphism $\psi$--is a phenomenon which occurs on the common base $B$, while topological T-duality exchanges refined information on the total spaces of the fibrations via $\tau$. In fact, understanding the isomorphism $\tau$ in an invariant way from the point of view of the base is cumbersome, as the spinor bundle $\mathbb{S}$ involves a twist by the half-density bundle $|\det T^*M|^{\frac{1}{2}}$. This mismatch between the nature of the isomorphisms $\tau$ and $\psi$, is the responsible of a phenomenon known in the literature as \emph{dilaton shift}, that we try to explain now using Definition \ref{def:dualpairs}.

To provide a more invariant description, we shall work in the generality of Definition \ref{def:duality} and Definition \ref{def:dualpairs}, without using any explicit formula for the duality isomorphism.
Let $(V_+,div)$ and $(\hat V_+,\hat{div})$ be invariant dual pairs on exact equivariant Courant algebroids $E$ and $\hat E$, in the sense of Definition \ref{def:dualpairs}. We assume that the generalized metric $V_+$ satisfies $V_+ \cap T^*M = \{0\}$, so that it induces a standard $T$-invariant metric $g$ on $M$. Since $g$ is an invariant metric on $M$, it is equivalent to a triple $(\underline{g},\theta,h)$, where $\underline{g}$ is a metric on $B$, $\theta \colon TM \to VM = \Ker p$ is a connection on $M \to B$, and $h$ is a metric on the fibres of $M$, that is,
$$
g = h(\theta , \theta) + p^*\underline{g}.
$$
Using the explicit calculation in Section \ref{subsec:example}, we can express
\begin{equation}
div(e') = \mu_g^{-1} L_{\pi_E(e')}\mu_g - \la e,e'\ra
\end{equation}
for a suitable $e \in \Gamma(E)^T$, where $\mu_g$ denotes the pseudo-Riemannian density. 
We denote by $\nu \in \Gamma(|\det VM^*|)$ the density induced by $h$ on the vertical bundle $VM \subset TM$. Identifying 
$$
VM \cong M \times \mathfrak{t},
$$ 
where $\mathfrak{t}$ denotes the Lie algebra of $T$, we regard $\nu = |\det h|$ as an equivariant map $\nu \colon M \to |\det \mathfrak{t}^*|$ and, 
using that $T$ is unimodular, we obtain a well-defined map
$$
\nu \colon B \to |\det \mathfrak{t}^*|.
$$

\begin{definition}
The \emph{shifting $1$-form} 
of the $T$-invariant generalized metric $V_+$ is
$$
d \log \nu := \nu^{-1} d\nu \in \Gamma(T^*B).
$$
\end{definition}


Similarly, assuming that  $\hat V_+ \cap T^*\hat M = \{0\}$, 
we denote by $(\hat e, \hat g,\underline{\hat g},\hat \theta,\hat h, \hat \nu)$ the corresponding tuple associated to $(\hat V_+,\hat{div})$. 
We will assume that the induced metrics on the base coincide $\underline{\hat g} = \underline{g}$. For topological T-duality, as in Theorem \ref{th:Tduality}, this follows from \cite[Proposition 5.6]{Streets}.  

\begin{proposition}[Dilaton shift]\label{prop:dilatonshift}
With the notation above, assume that $\underline{\hat g} = \underline{g}$. Then, the equality $p_*div = \psi^*\hat p_* \hat{div}$ in Definition \ref{def:dualpairs} is equivalent to
\begin{equation}\label{eq:dilatonshift}
\hat e = \psi(e) + 2 d\log (\hat \nu/\nu),
\end{equation}
where $d\log (\hat \nu/\nu)$ is regarded as a section of $\hat E/\hat T$ using the anchor map. 
\end{proposition}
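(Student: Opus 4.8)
The plan is to reduce both divergence operators in Definition \ref{def:dualpairs} to honest divergence operators on the common base $B$, and then to isolate the precise term by which the Riemannian divergence on a torus bundle fails to descend to the base; this discrepancy is governed by the fibrewise volume density $\nu$, and keeping track of it produces the shift \eqref{eq:dilatonshift}. Concretely, I would start from the expression $div(e')=\mu_g^{-1}L_{\pi_E(e')}\mu_g-\la e,e'\ra$ recorded above for $E$ and, after fixing a $\hat T$-invariant metric $\hat g$ representing $\hat V_+$, the analogous expression for $\hat E$. Since $\Gamma(E/T)=\Gamma(E)^T$ and $E/T$ is locally generated by invariant sections, the identity $p_*div=\psi^*\hat p_*\hat{div}$ is equivalent to the equality of the functions $p_*div(e')$ and $(\psi^*\hat p_*\hat{div})(e')$ on $B$ for every invariant $e'$, so it is enough to express $p_*div(e')$ in terms of data on $B$.

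The core step is a reduction formula for the Riemannian divergence. Writing $g=h(\theta,\theta)+p^*\underline g$, the splitting $TM=VM\oplus HM$ determined by $\theta$ identifies $|\det T^*M|\cong|\det VM^*|\otimes p^*|\det T^*B|$ and sends $\mu_g$ to $\nu\otimes p^*\mu_{\underline g}$. For a $T$-invariant vector field $X$ on $M$ I would decompose $X=X^V+X^H$ into its vertical part and the horizontal lift of $\bar X\defeq p_*X$. Invariant vertical fields are $C^\infty(B)$-combinations of the fundamental vector fields of the $T$-action, which are Killing for the invariant metric $g$; together with unimodularity of $T$ this gives $\mu_g^{-1}L_{X^V}\mu_g=0$. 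For $X^H$ one uses that it is $p$-related to $\bar X$ and commutes with the fundamental fields (because $\theta$ is a principal connection), hence preserves the vertical distribution and the induced factorisation of densities; a short calculation then gives
\begin{equation*}
\mu_g^{-1}L_{X^H}\mu_g=p^*\!\big(\mu_{\underline g}^{-1}L_{\bar X}\mu_{\underline g}\big)+\iota_{\bar X}\,d\log\nu,
\end{equation*}
where $d\log\nu=\nu^{-1}d\nu$ is a well-defined closed $1$-form on $B$ even though $\nu$ takes values in $|\det\mathfrak t^*|$. Combining the two contributions and expressing the contraction through the pairing of Example \ref{ex:divergence} (which, by the normalisation $\la X+\xi,X+\xi\ra=\xi(X)$, is where the factor $2$ enters), one obtains
\begin{equation*}
p_*div(e')=\mu_{\underline g}^{-1}L_{\pi_{E/T}(e')}\mu_{\underline g}+\la 2\,d\log\nu-e,\,e'\ra,
\end{equation*}
with $d\log\nu$ viewed as a section of $E/T$ via the anchor, and similarly for $\hat E/\hat T$ in terms of $\hat\nu$ and $\hat e$.

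To conclude I would compare the two sides. Since $\psi$ identifies $p_*V_+$ with $\hat p_*\hat V_+$ over $\mathrm{id}_B$, the induced metrics on $B$ coincide, $\underline g=\underline{\hat g}$; as any Courant algebroid isomorphism preserves the anchor, the Riemannian terms match under $\psi$ and cancel. Because $\psi$ also preserves the pairing it intertwines the co-anchor maps $\pi^*\colon T^*B\to E/T$ and $\pi^*\colon T^*B\to\hat E/\hat T$, so it fixes the class of $d\log(\hat\nu/\nu)$. Hence $p_*div=\psi^*\hat p_*\hat{div}$ becomes $\la 2d\log\nu-e,e'\ra=\la 2d\log\hat\nu-\psi^{-1}\hat e,e'\ra$ for all invariant $e'$, and non-degeneracy of the pairing gives $\psi^{-1}\hat e=e+2\,d\log(\hat\nu/\nu)$, which is \eqref{eq:dilatonshift}. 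The step I expect to be most delicate is the reduction formula for the Riemannian divergence: one must argue carefully that $X^H$ preserves the $\theta$-dependent factorisation of $|\det T^*M|$, bear in mind that $E/T$ is only transitive rather than exact, and—crucially for the exact statement—keep all normalisation constants, in particular the factor $2$ and the signs, consistent with the conventions of Section \ref{subsec:example} and Example \ref{ex:divergence}.
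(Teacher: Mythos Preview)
Your proposal is correct and follows essentially the same route as the paper: you establish the reduction formula
\[
p_*div(e')=\mu_{\underline g}^{-1}L_{\pi_{E/T}(e')}\mu_{\underline g}+\la 2\,d\log\nu-e,\,e'\ra
\]
via the factorisation $\mu_g=\mu_{\underline g}\otimes\nu$ and then compare the two sides using that $\psi$ is a Courant isomorphism over $\mathrm{id}_B$. The paper's proof is the same argument stated more tersely; your vertical/horizontal decomposition and Killing-field remark simply unpack the single line ``using the natural decomposition $\mu_g=\mu_{\underline g}\otimes\nu$'' that the paper offers as justification.
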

\begin{proof}
We denote $\underline{\pi} = \pi_{E/T}$. The statement follows from the formula
$$
div(e') = \mu_{\underline g}^{-1} L_{\underline{\pi}(e')}\mu_{\underline{g}} + 2\la \sigma_\nu,e'\ra - \la e,e'\ra,
$$
for any $e' \in \Gamma(E)^T$, since the metric on $B$ induced by $g$ and $\hat g$ coincide and $\psi$ is the identity along the Kernel of the anchor map $\underline{\pi}$. To prove this formula, we calculate
\begin{align*}
\mu_g^{-1} L_{\pi_E(e')}\mu_g =  \mu_{\underline g}^{-1} L_{\underline{\pi}(e')}\mu_{\underline{g}} + \nu^{-1} d\nu(\underline{\pi}(e')),
\end{align*} 
using the natural decomposition $\mu_g =  \mu_{\underline g} \otimes \nu$.
\end{proof}

Note that if $e = \varphi$ and $\hat e = \hat \varphi$ for invariant $1$-forms $\varphi$ on $M$ and $\hat \varphi$ on $\hat M$, respectively, then \eqref{eq:dilatonshift} is equivalent to
\begin{equation}\label{eq:dilatonexp}
\hat \varphi = \varphi - 2d \log \Bigg{(}\frac{|\det h|}{| \det \hat h|}\Bigg{)}.
\end{equation}
Going back to the setup of topological T-duality, using now the explicit \emph{Buscher rules} for the generalized metric we have (see \cite{CaGu})
$$
| \det \hat h| = |\det h|^{-1}.
$$
Thus, equation \eqref{eq:dilatonexp} tell us the precise way in which the `Weyl part' of the pure-type operators of $(V_+,div)$, given in \eqref{eq:LCvarphipure}, changes under the duality isomorphism. Finally, assuming that $\varphi$ and $\hat \varphi$ are exact, with the normalization $\varphi = 8d\phi$ and $\hat \varphi = 8d\hat\phi$ we obtain
$$
\hat \phi = \phi - \frac{1}{2}\log |\det h|.
$$
This equation is certainly not new: it was encountered by physicists in their computations of the dual Riemannian metric and dilaton field for T-dual sigma-models \cite{Buscher1} and carries the name of \emph{dilaton shift} (see Remark \ref{rem:killingphys}). We should stress that our formula \eqref{eq:dilatonshift} is independent of the explicit form of the duality isomorphism (provided that $\underline{\hat g} = \underline{g}$), and it is still valid if we replace $T$ and $\hat T$ in Definition \ref{def:duality} and Definition \ref{def:dualpairs} by arbitrary non-abelian unimodular Lie groups. Based on this, we believe that \eqref{eq:dilatonshift} generalizes to the non-abelian setup of Poisson-Lie T-duality \cite{KlSevera}. It would be interesting to compare our formula \eqref{eq:dilatonshift} with \cite[Eq. (3.16)]{OssaQue}, in the context of non-abelian duality in physics.

A different point of view on formula \eqref{eq:dilatonshift} is provided by the theory of Dirac generating operators, as considered in Section \ref{subsec:genoper}. Tracing back the construction of generating operators in the proof of Proposition \ref{prop:weylfixed} and using Lemma \ref{lem:Gconnec}, one can check that there is an identification 
between $T$-invariant generating operators on $\Gamma(\mathcal{S})$ of the Courant algebroid $E$ and generating operators on $\Gamma(\mathcal{S})^T = \Gamma(\mathcal{S}/T)$ of $E/T$. Using now the decomposition $\mu_g =  \mu_{\underline g} \otimes \nu$ of the Riemannian density, we obtain a commutative diagram of vector space isomorphisms
\begin{equation}\label{eq:diagram}
  \xymatrix{
   	\Gamma(\mathcal{S})^T  \ar[d]^{p_*} \ar[r]^{\otimes \mu_g^{\frac{1}{2}}}  & \Gamma(\mathbb{S})^T \ar[d]^{\otimes \nu^{-\frac{1}{2}}} & \\
    \Gamma(\mathcal{S}/T) \ar[r]^{\otimes \mu_{\underline g}^{\frac{1}{2}}} & \Gamma(\mathbb{S}_{E/T}) &
  }
\end{equation}
Using the horizontal arrows, we pull-back the canonical generating operator $\slashed d_0$ on $\Gamma(\mathbb{S})^T$ and $\underline{\slashed d}_0$ on $\Gamma(\mathbb{S}_{E/T})$ to the space of $T$-invariant sections $\Gamma(\mathcal{S})^T$ and to $\Gamma(\Gamma(\mathcal{S}/T) )$, respectively. Then, one has
$$
p_*(\slashed d_0) = \underline{\slashed d}_0 + \frac{1}{2} d \log \nu \cdot.
$$
Consequently, \eqref{eq:dilatonshift} can be interpreted as the discrepancy between $p_*(\slashed d_0)$ and the push-forward of the dual $p_*(\hat{\slashed d_0})$ using the isomorphism $\Gamma(\mathbb{S}_{E/T}) \cong \Gamma(\hat{\mathbb{S}}_{\hat E/\hat T})$ induced by $\tau$.

\section{The transitive case}\label{subsec:transitive}

In this section we apply our framework to the case of transitive Courant algebroids obtained by reduction. In particular, building on a result in \cite{grt}, we prove that the Hull-Strominger system of partial differential equations is invariant under T-duality (see Definition \ref{def:duality}).

Let $G$ be a Lie group, endowed with a biinvariant non-degenerate pairing on its Lie algebra, which we denote
$$
c \colon \mathfrak{g} \otimes \mathfrak{g} \to \RR.
$$
Let $p \colon P \to M$ be a principal $G$-bundle over a smooth manifold $M$ of dimension $n$. Provided that the (real) first Pontryagin class of $P$ with respect to $c$ vanishes
$$
p_1^c(P) = 0 \in H^4(M,\RR),
$$
there exists a transitive Courant algebroid $E$ over $M = P/G$ given by an extension
\begin{equation}\label{eq:transitiveext}
0 \to T^*M \to E \to TP/G \to 0.
\end{equation}
The anchor map of $E$ is given by the natural projection $TP/G \to TM$ and there is an isomorphism
$$
(\ad P,c) \cong \Ker \pi_E/(\Ker \pi_E)^\perp 
$$
as bundles of quadratic Lie algebras. The isomorphism classes of transitive Courant algebroids of this form are parametrized by the $H^3(M,\RR)$-torsor of \emph{real string classes} on $P$ \cite{BarHek}
$$
H^3_{str}(P,\RR) \subset H^3(P,\RR).
$$
Recall that a cohomology class $[\hat H] \in H^3(P,\RR)$ is said to be a string class if the restriction of $[\hat H]$ to
the fibres of $P$ coincides with the Cartan $3$-form class in $H^3(G,\RR)$ determined by the pairing $c$.

We fix a string class $[\hat H] \in H^3_{str}(P,\RR)$, and consider the associated transitive Courant algebroid $E$. Given a connection $\theta \in P$, there exists an isotropic splitting of $E$ and an isomorphism (see \cite[Proposition 2.4]{GF})
$$
E = T \oplus \ad P \oplus T^*,
$$
such that the pairing is given by
$$
\langle X + r + \xi,Y + t + \eta\rangle = \frac{1}{2}(\eta(X) +
\xi(Y)) + c(r,t),
$$
the anchor is the canonical projection $\pi_E(X + r + \xi) = X$ and the bracket is \cite{Severa}
\begin{equation}\label{eq:bracket}
  \begin{split}
    [X+r+\xi,Y+t+\eta]  = {} & [X,Y] + L_{X}\eta - \iota_{Y}d\xi + \iota_{Y}i_{X}H\\
    & - [r,t] - F(X,Y) + d^\theta_Xt - d^\theta_Y r\\
    & + 2c(d^\theta r,t) + 2c(\iota_XF,t) - 2c(\iota_Y F,r).
  \end{split}
\end{equation} 
In the last expression, $H$ is a $3$-form on $M$ (determined up to addition of exact $2$-forms), $F$ is the curvature of $\theta$, and $d^\theta$ is the induced covariant derivative on $\ad P$. The condition that \eqref{eq:bracket} satisfies the Jacobi identity  is equivalent to the \emph{Bianchi identity}
\begin{equation}\label{eq:bianchitrans}
dH = c(F \wedge F),
\end{equation}
which gives a distinguished trivialization of the first Pontryagin class of the principal bundle $p_1^c(P) = [c(F \wedge F)] \in H^4(M,\RR)$. We note that the pair $(H,\theta)$ determines a representant 
\begin{equation}\label{eq:CS}
\hat H = p^* H - CS(\theta) \in H^3_{str}(P,\RR),
\end{equation}
where $CS(\theta) = - \frac{1}{6}c(\theta \wedge [\theta,\theta]) + c(F \wedge \theta)$ is the Chern-Simons three-form of the connection $\theta$.

A generalized metric $V_+ \subset E$ is said to be \emph{admissible} if $r_+ = n$ and $V_-$ intersects $T^*$ transversally, that is, $V_- \cap T^* = \{0\}$ (notice the different sign notation in \cite[Definition 3.1]{GF}). An admissible generalized metric determines an isotropic splitting of $E$, a connection $\theta$ on $P$ and a three-form $H$ on $M$, so that the Courant structure is as above and we have the following simple expression for $V_\pm$
\begin{align*}
V_+ & = \{X + g(X): X \in \Gamma(T)\},\\
V_ - & = \{X - g(X) + r: X \in \Gamma(T), r \in \Gamma(\ad P)\},
\end{align*}
where $g$ is a (standard) metric on $M$. In the sequel we will assume that $g$ is Riemannian.

A torsion-free generalized connection for the admissible metric $V_+$ was constructed in \cite{grt} (see also \cite{GF}), and it is given explicitly by
\begin{equation}\label{eq:Levi-Civitaexptr}
  \begin{split}
    D^{0}_{a_-} c_- & {} = 2\Pi_-\(\nabla^{-1/3}_XZ - \frac{2}{3}g^{-1}c(i_XF,t) - \frac{1}{3}g^{-1}c(i_ZF,r)\)\\
    & \phantom{ {} = }+ d^\theta_X t - \frac{2}{3}F(X,Z) - \frac{1}{3}c^{-1}c(r,[t,\cdot]),\\
    D^{0}_{b_+} c_- &= 2\Pi_-\(\nabla^-_YZ - g^{-1}c(i_YF,t)\) + d^\theta_Y t - F(Y,Z),\\
    D^{0}_{a_-} b_+ &= 2\Pi_+\(\nabla^+_XY - g^{-1}c(i_YF,r)\),\\
    D^{0}_{b_+} d_+ &= 2\Pi_+\(\nabla^{1/3}_YW\).
  \end{split}
\end{equation}
where
\begin{equation}\label{eq:abcd}
  \begin{split}
    a_- &= X + r - gX,\\
    b_+ &= Y + gY,\\
    c_- &= Z + t - gZ,\\
    d_+ &= W + gW.
  \end{split}
\end{equation}
By \cite[Eq. (5.16)]{grt} it follows that $D^0$ differs from the generalized connection $D$ induced by
$$
\nabla^g \oplus d^\theta \oplus \nabla^{g^*}
$$
by a totally skew-symmetric element, and therefore $div_{D^0} = div_{D'}$. Now a direct calculation using \eqref{eq:divergenceex} shows that
$$
div_{D^0}(e) = div_{D}(e) = \mu_g^{-1}L_{\pi(e)}\mu_g.
$$
Denote $div = div_{D^0}$ and consider
\begin{equation}\label{eq:divergencevarphiex2}
div^\varphi = div - \la e, \cdot \ra,
\end{equation}
for $e \in \Gamma(E)$. Then, we can construct an element $D^\varphi \in \cD(V_+,div^\varphi)$ 
by
\begin{equation}\label{eq:LCvarphitr}
D^\varphi = D^0 + \frac{1}{n-1}\chi_+^{e^+} + \frac{1}{r_- - 1}\chi_-^{e^-}
\end{equation}

The following formulae for the Ricci tensors follows from a lengthy calculation using the curvatures of $D^\varphi$, similarly as in \cite[Proposition 4.2]{GF}.

\begin{lemma}\label{lem:Riccitr}
Let $e = \varphi^+ + \sigma^- + r$, for $\varphi, \sigma \in \Gamma(T^*)$ and $r \in \Gamma(\ad P)$. Then,
\begin{align*}
Ric^+(c^-,b^+) & = \(Ric^{\nabla^+} - F\circ F - \frac{1}{4} \nabla^+\varphi\)(Y,Z)\\
& - i_Y c\(d_A^*F - \frac{1}{2}*(F\wedge *H) + \frac{1}{4} F(g^{-1}\varphi,\cdot),t\) ,\\
Ric^-(b^+,c^-) & = \(Ric^{\nabla^-} - F\circ F - \frac{1}{4} \nabla^-\sigma\)(Y,Z)\\
& - i_Y c\(d_A^*F - \frac{1}{2}*(F\wedge *H) + \frac{1}{4} F(g^{-1}\sigma,\cdot),t\) - c([c^-,r],b^+).
\end{align*}
\end{lemma}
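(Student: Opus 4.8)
The plan is to use that the connection $D^\varphi$ defined by \eqref{eq:LCvarphitr} lies in $\cD(V_+,div^\varphi)$, so that by Definition~\ref{def:Ricci} (which is well posed thanks to Proposition~\ref{propo:Riccitorsion}) the Ricci tensors of the pair $(V_+,div^\varphi)$ are computed by $Ric^\pm=Ric^\pm_{D^\varphi}$. Hence it suffices to compute the curvature operators $R^\pm_{D^\varphi}$ explicitly from \eqref{eq:Levi-Civitaexptr} and \eqref{eq:LCvarphitr} and take the traces of Definition~\ref{def:RicciD}. I would write $D^\varphi=D^0+\chi$ with $\chi=\frac{1}{n-1}\chi_+^{\varphi^+}+\frac{1}{r_--1}\chi_-^{\sigma^-+r}$, where $\varphi^+\in\Gamma(V_+)$ and $\sigma^-+r\in\Gamma(V_-)$ are the $V_\pm$-components of the section $e$ (note that the $\ad P$-valued part $r$ of $e$ lies in $V_-$), and treat the contribution of $D^0$ and of the Weyl term $\chi$ separately.

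\textbf{Step 1: curvature of $D^0$.} First I would compute $Ric^\pm_{D^0}$ by a direct calculation from the pure- and mixed-type operators in \eqref{eq:Levi-Civitaexptr}, along the lines of \cite[Proposition 4.2]{GF} and the curvature computations in \cite{grt}. Differentiating twice and tracing over an orthogonal frame of $V_\pm$ adapted to the splitting $E=T\oplus\ad P\oplus T^*$, this should give
\begin{align*}
Ric^+_{D^0}(c^-,b^+)&=\(Ric^{\nabla^+}-F\circ F\)(Y,Z)-i_Y c\(d_A^*F-\tfrac{1}{2}*(F\wedge *H),t\),\\
Ric^-_{D^0}(b^+,c^-)&=\(Ric^{\nabla^-}-F\circ F\)(Y,Z)-i_Y c\(d_A^*F-\tfrac{1}{2}*(F\wedge *H),t\),
\end{align*}
with the notation \eqref{eq:abcd}. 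When $\ad P$ is trivial this must reduce to the $\varphi=0$ case of \eqref{eq:Riccipmexact}, and the algebraic Bianchi identity \eqref{eq:bianchi} for the torsion-free connection $D^0$ can be used to simplify intermediate expressions and as a consistency check.

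\textbf{Step 2: the Weyl correction.} Next I would compute $Ric^\pm_{D^\varphi}-Ric^\pm_{D^0}$ using the variation formula obtained in the proof of Proposition~\ref{propo:Riccitorsion}, but keeping the Weyl part rather than discarding it: here $\sum_i(\chi_\pm^{e^\pm})_{e_i^\pm}\tilde e_i^\pm=(r_\pm-1)e^\pm\neq 0$, so the trace contributes genuine terms. On the tangent block this reproduces, as in the exact case \eqref{eq:Riccipmexact}, the terms $-\tfrac{1}{4}\nabla^+\varphi$ in $Ric^+$ and $-\tfrac{1}{4}\nabla^-\sigma$ in $Ric^-$. The feature particular to the transitive setting is the interaction of $\chi_+^{\varphi^+}$ (resp.\ $\chi_-^{\sigma^-}$) with the $F$-dependent mixed-type operators in \eqref{eq:Levi-Civitaexptr}, which produces the terms $\tfrac{1}{4}F(g^{-1}\varphi,\cdot)$ inside $i_Y c(\cdot,t)$ in $Ric^+$ and $\tfrac{1}{4}F(g^{-1}\sigma,\cdot)$ in $Ric^-$. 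Finally, the $\ad P$-valued part $r$ affects only $Ric^-$ through $\chi_-^{r}$, and, since the bracket on $V_-$ contains the Lie bracket of $\ad P$ (see \eqref{eq:bracket}), this contributes precisely the extra term $-c([c^-,r],b^+)$. Adding the outputs of Step 1 and Step 2 gives the stated formulae.

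\textbf{Main obstacle.} The difficulty is computational rather than conceptual: the connection \eqref{eq:Levi-Civitaexptr} is a hybrid of $\nabla^\pm$, $\nabla^{\pm1/3}$ and several $F$- and $c$-dependent correction terms, so the honest bookkeeping of the second covariant derivatives, of the projections $\Pi_\pm$, of the mixed normalisation of the pairing (a factor $\tfrac{1}{2}$ on $T\oplus T^*$ and $c$ on $\ad P$), and of the identification of the contractions $d_A^*F$ and $*(F\wedge *H)$ is long and error-prone. I would keep it under control by verifying the exact-case limit \eqref{eq:Riccipmexact}, by checking that every term is a well-defined tensor on $M$, and by using \eqref{eq:bianchi} together with the symmetry \eqref{eq:Riccisymmetry} (valid when $\varphi=\sigma$ is closed and $r=0$) as partial tests.
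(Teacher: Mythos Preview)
Your proposal is correct and matches the paper's own approach: the paper does not give a detailed proof but states that the formulae follow from ``a lengthy calculation using the curvatures of $D^\varphi$, similarly as in \cite[Proposition 4.2]{GF}'', which is exactly the computation you outline, including the reference to \cite{GF}. Your decomposition $D^\varphi=D^0+\chi$ into the torsion-free base connection plus the Weyl correction, and your identification of the $r$-dependent term as arising solely in $Ric^-$ via the $\ad P$ bracket, are precisely the structure of that calculation.
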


Using this formula, in the next result we give a sufficient condition for a pair $(V_+,div)$ to have the skew symmetry property.

\begin{proposition}\label{prop:Riccisymtr}
Let $E$ be a transitive Courant algebroid determined by $\hat H \in H^3_{str}(P,\RR)$. Assume that $E$ admits a spinor bundle $S$, and a root $(\det S)^{1/r_S}$. A pair $(V_+,div)$, given by an admissible generalized metric $V_+$ and a divergence operator $div$, 
has the skew-symmetry property in Definition \ref{def:Riccisym} provided that $div$ is induced by a Dirac generating operator.
\end{proposition}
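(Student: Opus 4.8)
The plan is to reduce the skew-symmetry property to a pointwise identity relating the bilinear forms $Ric^+$ and $Ric^-$, to translate the hypothesis on $div$ into explicit constraints on its data, and then to verify the identity from the formulae of Lemma~\ref{lem:Riccitr}.

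First I would unwind Definition~\ref{def:Riccisym}. The total Ricci tensor $Ric = Ric^+ - Ric^-$, viewed as an endomorphism of $E$, interchanges $V_+$ and $V_-$ (it sends $c^- \in V_-$ to $Ric^+(c^-) \in V_+$ and $b^+ \in V_+$ to $-Ric^-(b^+) \in V_-$), so its blocks along $V_+\times V_+$ and $V_-\times V_-$ are automatically skew. Using that the ambient pairing restricted to $V_\pm$ is the metric employed to turn $Ric^\pm$ into endomorphisms, the condition $Ric \in \Gamma(\mathfrak{o}(E))$ becomes
\begin{equation*}
Ric^+(c^-,b^+) = Ric^-(b^+,c^-) \qquad \text{for all } b^+\in\Gamma(V_+),\ c^-\in\Gamma(V_-).
\end{equation*}
For exact Courant algebroids this is exactly the symmetry \eqref{eq:Riccisymmetry}, i.e.\ $h^+ = h^-$ and $b^+ = -b^-$.

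Next I would use the hypothesis that $div$ is induced by a Dirac generating operator to constrain the section $e\in\Gamma(E)$ with $div = div_{D^0} - \langle e,\cdot\rangle$ as in \eqref{eq:divergencevarphiex2}. Writing $e = \varphi^+ + \sigma^- + r$ with $\varphi,\sigma\in\Gamma(T^*)$ and $r\in\Gamma(\ad P)$ in the splitting $E = T\oplus\ad P\oplus T^*$ determined by $V_+$, I would follow the construction of generating operators in Theorem~\ref{th:genoper} and the proof of Proposition~\ref{prop:weylfixed}: starting from the canonical operator $\slashed d_0$ attached to $D^0$, every generating operator on $\Gamma(\mathcal{S})$ has the form $\slashed d_0 + e\cdot$ with $[\slashed d_0,e\cdot]\in C^\infty(M)$, and computing this graded commutator with the bracket \eqref{eq:bracket} forces $\pi_E(e) = 0$ (hence $\varphi = \sigma$), $d\varphi = 0$, and the vanishing of the $\ad P$-contribution of $r$ (generically $r = 0$; in general $d^\theta r = 0$ together with the vanishing of the relevant $F$-contraction of $r$). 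These are precisely the conditions needed afterwards.

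Finally I would substitute $\sigma = \varphi$ and these constraints into Lemma~\ref{lem:Riccitr} and check the identity above term by term. For the part valued in $T^*\otimes T^*$ this is the bookkeeping behind \eqref{eq:Riccipmexact}--\eqref{eq:Riccisymmetry}: $Ric^{\nabla^+}$ and $Ric^{\nabla^-}$ have equal symmetric parts and opposite skew parts (their torsions being $\pm H$), $F\circ F$ is symmetric, and $\nabla^+\varphi$ and $\nabla^-\varphi$ have equal symmetric parts while their skew parts add up to $d\varphi$ — so, after the exchange of the $V_\pm$-arguments between $Ric^+$ and $Ric^-$, the $T^*\otimes T^*$ contributions agree exactly because $d\varphi = 0$. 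For the $\ad P$-valued part one checks that the terms $\iota_Y c\big(d_A^*F - \tfrac{1}{2}*(F\wedge *H) + \tfrac{1}{4}F(g^{-1}\varphi,\cdot),t\big)$ match once $\sigma = \varphi$, and that the remaining term $-c([c^-,r],b^+)$ of $Ric^-$ vanishes by the constraint on $r$. The step I expect to be the main obstacle is the middle one: unwinding the canonical generating operator of \cite{AXu,GMXu} on a transitive Courant algebroid and identifying which divergence operators it yields — in particular pinning down the constraint on $r$ — is delicate because of the twist by $|\det T^*M|^{1/2}$ and the extra terms in \eqref{eq:bracket}; the remaining computations are lengthy but routine extensions of the exact case.
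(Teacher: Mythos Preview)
Your approach is essentially the same as the paper's: reduce skew-symmetry to $Ric^+(c^-,b^+) = Ric^-(b^+,c^-)$, extract constraints on $e$ from the Dirac-generating-operator hypothesis, and then verify the identity using Lemma~\ref{lem:Riccitr}. The paper's proof is terser in the final step (it simply appeals to Lemma~\ref{lem:Riccitr}), but the logic is identical.

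The one place where the paper does something cleaner than you propose is the middle step --- exactly the step you flag as the main obstacle. You suggest computing the graded commutator $[\slashed d_0, e\cdot]$ explicitly by unwinding the canonical generating operator on a transitive algebroid. The paper sidesteps this entirely: since $\slashed d_0$ is a generating operator, property~ii) of Definition~\ref{def:genoper} gives $[[\slashed d_0, e\cdot], e'\cdot] = [e,e']\cdot$ for every $e'$. Hence $[\slashed d_0, e\cdot] \in C^\infty(M)$ forces $[e,e'] = 0$ for all $e' \in \Gamma(E)$, and one reads off the constraints on $e$ directly from the bracket formula~\eqref{eq:bracket}: $\pi_E(e) = 0$ (so $\varphi = \sigma$), $d\varphi = 0$, $d^\theta r = 0$, and $[r,\cdot] = 0$ in $\ad P$. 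No explicit formula for $\slashed d_0$ is ever needed, so the delicate twist-by-half-density computation you anticipate is avoided.
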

\begin{proof}
By Theorem \ref{th:genoper}, the twisted spinor bundle $\mathbb{S}$ admits a canonical generating operator $\slashed d_0$. Using the (pseudo)Riemannian density on $M$ induced by $V_+$, we identify $\mathbb{S} \cong \mathcal{S}$. Then, applying again Theorem \ref{th:genoper}, any Dirac generating operator on $\mathcal{S}$ is of the form $\slashed d_0 + e \cdot$, where $e \in \Gamma(E)$ is such that $[\slashed d_0,e\cdot] \in C^\infty(M)$. Using property ii) in Definition \ref{def:genoper}, this implies that $[e,e'] = 0$ for any $e' \in \Gamma(E)$, and using the explicit formula for the bracket \eqref{eq:bracket}, we obtain $e = \varphi + r$, with $d \varphi = 0$, $d^\theta r = 0$ and $[r,\cdot] = 0$. The statement follows now from Lemma \ref{lem:Riccitr}.
\end{proof}

Next, we give an explicit formula for the Ricci flow on transitive Courant algebroids obtained by reduction.

\begin{proposition}\label{prop:Riccisymtr}
Let $E$ be a transitive Courant algebroid determined by $\hat H \in H^3_{str}(P,\RR)$. Let $(G_t,div_t)$ be a $1$-parameter family of pairs as before, with $G_t$ admissible for all $t$. Then, the Ricci flow \eqref{eq:Ricciflowpm} is equivalent to
\begin{equation}\label{eq:Ricciflowtr}
\begin{split}
\partial_t g & = -2\operatorname{Ric}^g + \frac{1}{2} H \circ H + 2F\circ F + \frac{1}{2} (\nabla^g\varphi)^{sym},\\
\partial_t b & = - d^*H - \frac{1}{4} \iota_{g^{-1}\varphi} H - \frac{1}{2}d\varphi,\\
\partial_t \theta & = - d_A^*F + \frac{1}{2}*(F\wedge *H) - \frac{1}{4} \iota_{g^{-1}\varphi}F.
\end{split}
\end{equation}
where $\nabla^g \varphi = (\nabla^g\varphi)^{sym} + d\varphi$.
\end{proposition}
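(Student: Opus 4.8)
The plan is to translate the single flow equation $\partial_t G^+ = -2\operatorname{Ric}^+_t$ into the three equations of \eqref{eq:Ricciflowtr} by matching tensorial types, exactly as in the exact case treated in Example \ref{example:Ricciflow}, with the extra $\ad P$-directions now accounting for the evolution of the connection $\theta$.

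First I would unpack the $1$-parameter family $(G_t,div_t)$. Since $G_t$ is admissible, it is equivalent to a triple $(g_t,b_t,\theta_t)$ consisting of a Riemannian metric $g_t$ on $M$, the $2$-form $b_t$ codifying the isotropic splitting of $E=T\oplus\ad P\oplus T^*$ determined by $G_t$, and a principal connection $\theta_t$ on $P$; and by \eqref{eq:divergencevarphiex2} we may write $div_t = div_{g_t} - \la e_t,\cdot\ra$ with $e_t = \varphi_t^+ + \sigma_t^- + r_t$. Fixing a time $t_0$ and taking as reference isotropic splitting the one determined by $G_{t_0}$, so that $b_{t_0}=0$, the Courant structure at $t_0$ is the one in \eqref{eq:bracket} attached to $(H_{t_0},\theta_{t_0})$ and $V_\pm$ take the standard form recalled before Lemma \ref{lem:Riccitr}. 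Differentiating $G_t$ at $t_0$ and using that $\partial_t g_t$ (symmetric), $\partial_t b_t$ (skew) and $\partial_t\theta_t$ (valued in $T^*M\otimes\ad P$) are independent deformations, I would write down the skew-orthogonal endomorphism $G_{t_0}\partial_t G$ and extract its off-diagonal block $\partial_t G^+\colon V_-\to V_+$. Under the identifications $V_+\cong T$ and $V_-\cong T\oplus\ad P$ furnished by the anchor and the splitting, $\partial_t G^+$ becomes a bilinear pairing on $T$ together with a map $\ad P\to T$: its symmetric part on $T$ is, up to a universal constant, $g_{t_0}^{-1}\partial_t g_t$, its skew part on $T$ is $\partial_t b_t$, and its $\ad P$-component is $\partial_t\theta_t$.

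Next I would substitute the formula for $\operatorname{Ric}^+$ from Lemma \ref{lem:Riccitr} into $\partial_t G^+ = -2\operatorname{Ric}^+_t$ and read off the three components. The $(Y,Z)$-part of $\operatorname{Ric}^+$ splits, via the decomposition of $\operatorname{Ric}^{\nabla^+}$ into its symmetric part $\operatorname{Ric}^g-\tfrac14 H\circ H$ and skew part $-\tfrac12 d^*H$ (as in \eqref{eq:Riccipmexact}) and the splitting of $\nabla^+\varphi=\nabla^g\varphi+\tfrac12 g^{-1}H\cdot\varphi$ into its symmetric part $(\nabla^g\varphi)^{sym}$ and a skew part built from $d\varphi$ and $\iota_{g^{-1}\varphi}H$, into a symmetric part giving the $\partial_t g$ equation — where $-F\circ F$ produces the $+2F\circ F$ term after the factor $-2$ — and a skew part giving the $\partial_t b$ equation. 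The $(t,Y)$-part $-i_Y c\big(d_A^*F-\tfrac12 *(F\wedge *H)+\tfrac14 F(g^{-1}\varphi,\cdot),t\big)$ of $\operatorname{Ric}^+$ is precisely the $\ad P$-component, and matching it against $\partial_t\theta_t$ yields the third equation of \eqref{eq:Ricciflowtr}. Consistently with Lemma \ref{lem:Riccitr}, the pieces $\sigma_t$ and $r_t$ of $e_t$ do not appear in $\operatorname{Ric}^+_t$ and hence drop out of the flow.

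The main obstacle I anticipate is not conceptual but is the bookkeeping in the identification step: writing $\partial_t G^+$ explicitly in terms of $(\partial_t g,\partial_t b,\partial_t\theta)$ requires a frame adapted to $V_\pm$, an honest account of how the reference splitting and $\theta$ co-vary, and keeping track of the factor $\tfrac12$ in the pairing \eqref{eq:bracket}, so that all numerical constants line up with \eqref{eq:Ricciflowtr}. Once that normalization is pinned down, the matching of tensorial types parallels Example \ref{example:Ricciflow} together with the computations behind Lemma \ref{lem:Riccitr} (cf. \cite[Proposition 4.2]{GF}), and no further input is needed.
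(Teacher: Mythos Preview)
Your proposal is correct and follows essentially the same approach as the paper: fix the isotropic splitting determined by $G_{t_0}$ (so $b_{t_0}=0$ and, in the paper's notation, $a_{t_0}=0$), differentiate $G_t$ there, and read off $\langle \partial_t G\, c_-, b_+\rangle = \partial_t g(Z,Y) - \partial_t b(Z,Y) + 2c(\partial_t\theta(Y),t)$, then match against Lemma~\ref{lem:Riccitr}. The only thing the paper makes explicit that you leave implicit is the parametrization of the moving splitting by the orthogonal transformations $e^{(b_t,a_t)}$ of \cite{GF}, which is exactly the device that pins down the constants you flag as the main bookkeeping issue.
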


\begin{proof}
Fixing an isotropic splitting for $E$, the family $(G_t,div_t)$ can be written as
$$
G_t = e^{(b_t,a_t)} \left( \begin{array}{ccc}
0 & 0 & g_t^{-1} \\
0 & - 1 & 0\\
g_t & 0 & 0 \end{array}\right) e^{(-b_t,-a_t)}
$$
$$
div_t(e) = \mu^{-1}_{g_t}L_{\pi(e)}\mu_{g_t} - \la e_t,e\ra
$$
where $e_t = \varphi_t^+ + \sigma_t^- + r_t$ as in Lemma \ref{lem:Riccitr}, and \cite{GF}
$$
e^{(b_t,a_t)} = \left( \begin{array}{ccc}
1 & 0 & 0 \\
a_t & 1 & 0\\
b_t - c(a_t \otimes a_t) & -2c(a_t,\cdot) & 1 \end{array}\right),
$$
for $b_t$ a $2$-form and $a_t$ an $\ad P$-valued $1$-form on $M$. Without loss of generality, we can assume $b_{t_0} = 0$ and $a_{t_0} = 0$, so that the derivative of $G_t$ at time $t_0$ is
$$
\partial_t G = [e^{(b_t,a_t)},G_{t_0}] + \left( \begin{array}{ccc}
0 & 0& - g_t^{-1} \partial_t g_t g_t^{-1} \\
0 & 0 & 0 \\
\partial_t g_t & 0 & 0 \end{array}\right).
$$
Notice that the connection on $P$ induced by $V_t$ is $\theta_t = \theta_{t_0} + a_t$ (see \cite[Proposition 3.4]{GF}). A direct calculation implies now that
$$
\langle \partial_t G  c_-,b_+ \rangle = \partial_t g (Z,Y) - \partial_t b (Z,Y) + 2c(\partial_t \theta(Y),t)
$$
at time $t_0$, and thus the result follows from Lemma \ref{lem:Riccitr}.
\end{proof}

We next state a result from \cite{grt}, which establishes the link between the Killing spinor equations \eqref{eq:killing} and the Hull-Strominger system \cite{HullTurin,Strom}. Assume that $M$ is a spin compact manifold of dimension $6$. Then, a solution of the Hull-Strominger system on $(M,P)$ is given by a tuple $(\omega,\Omega,\theta)$, given by a complex $3$-form $\Omega$ which determines an almost-complex structure on $M$, a hermitian form $\omega$, and a connection $\theta$ on $P$, satisfying equations
\begin{equation}\label{eq:Stromingersystem}
\begin{split}
d\Omega & = 0,\\
F_\theta \wedge \omega^2 & = 0,\\
d^*\omega - d^c \log \|\Omega\|_\omega & = 0,\\
dd^c \omega - c(F_\theta \wedge F_\theta) & = 0. 
\end{split}
\end{equation}
We refer to the review \cite{GF2} for a detailed discussion about these interesting equations. Note from the last equation that any solution of the Hull-Strominger system determines in particular a string class in $[\hat H] \in H^3_{str}(P,\RR)$, given by \eqref{eq:CS} with $H = d^c \omega$. We shall denote $E_{[\hat H]}$ the corresponding Courant algebroid.

\begin{theorem}[\cite{grt}]\label{th:Str}
Assume that $M$ is a spin compact manifold of dimension $6$. Solutions of the Hull-Strominger with string class $[\hat H] \in H^3_{str}(P,\RR)$ are in correspondence with solutions $(V_+,div)$ of the Killing spinor equations \eqref{eq:killing} on $E_{[\hat H]}$, such that $V_+$ is admissible and $\varphi = div - div_{D^0}$ is an exact $1$-form.
\end{theorem}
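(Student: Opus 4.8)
The plan is to use the explicit torsion-free generalized connection $D^\varphi \in \cD(V_+,div^\varphi)$ recorded in \eqref{eq:Levi-Civitaexptr}--\eqref{eq:LCvarphitr}, together with its canonical Dirac operators, to translate the two Killing spinor equations \eqref{eq:killing} into a system of partial differential equations for the classical data on $M$, and then to match that system with the Strominger system \eqref{eq:Stromingersystem} via the standard spinorial dictionary in dimension six. An admissible generalized metric $V_+$ is equivalent to a triple $(g,\theta,H)$ with $g$ a Riemannian metric, $\theta$ a connection on $P$ and $H$ a three-form on $M$; the requirement that $E_{[\hat H]}$ be a genuine Courant algebroid already forces the Bianchi identity \eqref{eq:bianchitrans}, $dH = c(F_\theta \wedge F_\theta)$, which is the last equation of \eqref{eq:Stromingersystem}. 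The assumption that $\varphi = div - div_{D^0}$ be exact means $\varphi = 2d\phi$ for a globally defined function $\phi$, which will play the role of the dilaton.

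First I would unwind $D^+_-\eta = 0$. The bundle $V_-$ of an admissible metric splits, through the anchor, into a part mapping isomorphically onto $TM$ and the summand $\ad P$, on which the anchor vanishes. Feeding the first type of direction into $D^+_-\eta = 0$, via the formula for $D^0_{a_-}b_+$ in \eqref{eq:Levi-Civitaexptr} corrected by the Weyl term in \eqref{eq:LCvarphitr}, one should obtain that $\eta$ is parallel for the metric connection with skew torsion $\nabla^+ = \nabla^g + \tfrac12 g^{-1}H$; feeding in the $\ad P$ directions, along which $D^+_-$ is a zeroth order operator, should produce the algebraic gaugino equation $F_\theta\cdot\eta = 0$, where a section $r$ of $\ad P$ acts by Clifford multiplication through the two-form $c(F_\theta,r)$. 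Next I would unwind $\slashed D^+\eta = 0$: using that $\slashed D^{\varphi+} = \slashed\nabla^{1/3} + \tfrac14\varphi\cdot$ is the transitive analogue of \eqref{eq:LCvarphiDirac}, this becomes the dilatino equation $\slashed\nabla^{1/3}\eta + \tfrac12 (d\phi)\cdot\eta = 0$, which, given $F_\theta\cdot\eta = 0$, is equivalent to $(d\phi)\cdot\eta - \tfrac14 H\cdot\eta = 0$.

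Then I would invoke the classical dictionary. On a Riemannian spin six-manifold a unit spinor $\eta$ determines an $SU(3)$-structure $(\omega,\Omega)$, and standard spinorial computations (Strominger \cite{Strom} and subsequent work; see also \cite{GF,grt}) identify: $\nabla^+\eta = 0$ with integrability of the underlying almost complex structure together with $H = d^c\omega$ and a relation $\|\Omega\|_\omega = e^{c_0-2\phi}$ for a constant $c_0$; $F_\theta\cdot\eta = 0$ with $F_\theta$ being of type $(1,1)$ and primitive, i.e. $F_\theta\wedge\omega^2 = 0$; and, once these hold, the dilatino equation with the pair $d\Omega = 0$ and $d^*\omega - d^c\log\|\Omega\|_\omega = 0$. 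Together with the automatic Bianchi identity this reproduces exactly \eqref{eq:Stromingersystem}, and the exactness of $\varphi$ is precisely what makes the dilaton $\phi$, equivalently $\|\Omega\|_\omega$, a globally defined function rather than one with merely globally defined differential. Conversely, given a solution $(\omega,\Omega,\theta)$ of \eqref{eq:Stromingersystem} I would build $(V_+,div^\varphi)$ by taking $g$ the $SU(3)$-metric, $H = d^c\omega$, $\varphi = 2d\phi$ with $e^{-2\phi}$ a constant multiple of $\|\Omega\|_\omega$, and $\eta$ the defining spinor, and then run the equivalences backwards.

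The hard part will be the spinorial translation in the last two steps: keeping track of all normalization constants and signs — the torsions $\pm H$ and $\pm\tfrac13 H$, and the fact that $D^\varphi$ is built from $\nabla^+$ whereas $\slashed D^{\varphi+}$ is built from $\nabla^{1/3}$ — and, above all, handling the $\ad P$ directions of the transitive Courant algebroid, which have no analogue in the exact case and which are what generate the gaugino equation. Controlling the interplay between $\nabla^+$-parallelism and $\slashed\nabla^{1/3}$-harmonicity, so that the second-order equations in \eqref{eq:Stromingersystem} follow from the first-order system, is where a Lichnerowicz-type identity for Bismut's cubic Dirac operator becomes essential, exactly as in the proof of Proposition \ref{prop:specialholonomy}.
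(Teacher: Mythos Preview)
The paper does not give its own proof of this statement: Theorem~\ref{th:Str} is quoted from \cite{grt} and used as a black box in the proof of Theorem~\ref{th:Strduality}. So there is no proof in the paper to compare against. Your outline is, in fact, a faithful sketch of the argument carried out in \cite{grt}: unwind the two equations \eqref{eq:killing} using the explicit connection \eqref{eq:Levi-Civitaexptr}--\eqref{eq:LCvarphitr}, obtain the gravitino equation $\nabla^+\eta=0$, the gaugino equation $F_\theta\cdot\eta=0$ from the $\ad P$ directions, and the dilatino equation from $\slashed D^+\eta=0$; then translate via the spinor/$SU(3)$-structure dictionary in dimension six, with the Bianchi identity \eqref{eq:bianchitrans} supplied by the Courant structure itself.

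Two small corrections to your sketch. First, the Weyl term $\chi_+^{e^+}$ in \eqref{eq:LCvarphitr} is of pure $V_+$ type and therefore does \emph{not} modify the mixed operator $D^+_-$; so the equation $D^+_-\eta=0$ is read off directly from the third line of \eqref{eq:Levi-Civitaexptr}, with no Weyl correction, and you get $\nabla^+\eta=0$ and $F_\theta\cdot\eta=0$ straight away. Second, no Lichnerowicz-type identity is needed here. All four equations in \eqref{eq:Stromingersystem} are first order in $(\omega,\Omega)$ except the last, and that one is exactly the Bianchi identity $dH=c(F\wedge F)$ with $H=d^c\omega$, which is already encoded in the Courant bracket and requires no analysis. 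The Bismut--Lichnerowicz formula appearing in the proof of Proposition~\ref{prop:specialholonomy} serves a completely different purpose (to force $H=0$ and $d\phi=0$ in the exact compact case) and plays no role in the correspondence of Theorem~\ref{th:Str}.
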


The particular instance of the equations \eqref{eq:Stromingersystem} which is relevant in physics is the case when the principal bundle $P$ is a fibred product
$$
P = P_M \times_M P_K,
$$
where $P_M$ is the bundle of oriented frames of $M$ and $P_K$ is principal bundle with compact structure group $K$, and the pairing $c$ is
$$
c = \tr_{\mathfrak{gl}} - \tr_{\mathfrak{k}}
$$
for $-\tr_{\mathfrak{gl}}$ and $-\tr_{\mathfrak{k}}$ positive definite forms on $\mathfrak{gl}(6,\RR)$ and $\mathfrak{k}$, respectively. In this situation, we obtain a strong form of Proposition \ref{prop:Ricciflat}, which recovers the `if part' of the main theorem in \cite{Ivan09}.

\begin{proposition}
Assume that $P = P_{K} \times_M P_{K'}$ and that $c = \tr_{\mathfrak{k}} - \tr_{\mathfrak{k}'}$, with $-\tr_{\mathfrak{k}}$ and $-\tr_{\mathfrak{k}'}$ positive definite forms. Then, any solution $(V_+,div)$ of the Killing spinor equations \eqref{eq:killing} with $V_+$ admissible is Ricci flat.
\end{proposition}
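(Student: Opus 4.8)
The plan is to treat the two eigenbundle components of the Ricci tensor separately. First I would note that admissibility of $V_+$ forces the induced metric $g$ on $M$ to be Riemannian, and that the restriction of the ambient pairing to $V_+ = \{X + g(X) : X \in \Gamma(T)\}$ is precisely $g$; hence $V_+$ is a positive-definite generalized metric. Proposition~\ref{prop:Ricciflat} then applies to a solution $(V_+,div)$ of the Killing spinor equations and gives $Ric^+ = 0$ at once, so $(V_+,div)$ satisfies \eqref{eq:Ricciflat}. The content beyond Proposition~\ref{prop:Ricciflat} is that the total Ricci tensor $Ric = Ric^+ - Ric^- \in \Gamma(\End E)$ vanishes; since $Ric$ interchanges $V_+$ and $V_-$ and $\langle\cdot,\cdot\rangle$ is nondegenerate on $V_-$, given $Ric^+ = 0$ this is equivalent to $Ric^- = 0$, and also to the skew-symmetry property $Ric \in \Gamma(\mathfrak{o}(E))$ of Definition~\ref{def:Riccisym}.

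To obtain this I would run the argument of the proof of Proposition~\ref{prop:specialholonomy} in the transitive setting, now retaining the curvature $F$ of the principal connection. By Lemma~\ref{lem:Riccitr}, the vanishing of $Ric^+$ splits into a base-valued Einstein-type identity relating $Ric^{\nabla^+}$, $F \circ F$ and $\nabla^+\varphi$, together with an $\ad P$-valued first-order identity for $F$ (a Yang--Mills equation with a skew-torsion correction). I would then contract the base-valued identity with $g$ and combine it with the Bismut-type Lichnerowicz formula for the cubic Dirac operator \cite{Bismut} --- applied to the Killing spinor $\eta$, which is $\nabla^+$-parallel by the first equation in \eqref{eq:killing} and is annihilated by $\slashed D^+$ by the second --- to produce scalar identities among $|H|^2$, the $c$-norm of $F$, a Laplacian of the dilaton, and $|d\varphi|^2$. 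Integrating these over the compact $M$, as in Proposition~\ref{prop:specialholonomy} and with the $e^{\alpha\phi}$-integration device handling the $\varphi$-contribution, should force the torsion and curvature to degenerate: one is then reduced to the trivial extension $E \cong T^*M \oplus TM \oplus \ad P$ with $\nabla^\pm = \nabla^{\pm 1/3} = \nabla^g$ and $\eta$ a $\nabla^g$-parallel spinor, so that $g$ is Ricci-flat and both $Ric^+$ and $Ric^-$, as read off from Lemma~\ref{lem:Riccitr}, vanish. Equivalently, once the divergence is seen to come from a Dirac generating operator, the skew-symmetry criterion proved above via Dirac generating operators gives $Ric^- = Ric^+ = 0$ directly.

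The hard part --- and the place where the hypothesis is used --- is the sign bookkeeping in this last step. Writing $F = F_{\mathfrak{k}} \oplus F_{\mathfrak{k}'}$ and using the Bianchi identity $dH = c(F \wedge F) = \tr_{\mathfrak{k}}(F_{\mathfrak{k}} \wedge F_{\mathfrak{k}}) - \tr_{\mathfrak{k}'}(F_{\mathfrak{k}'} \wedge F_{\mathfrak{k}'})$, the curvature contributions to the integrated identities are controlled by the quadratic forms $\tr_{\mathfrak{k}}$ and $\tr_{\mathfrak{k}'}$; it is precisely the assumption that $-\tr_{\mathfrak{k}}$ and $-\tr_{\mathfrak{k}'}$ are positive definite that makes those contributions assemble, after integration by parts, into a definite expression, which is what forces their vanishing. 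In the genuine heterotic situation $P = P_M \times_M P_K$, one factor is the frame bundle and carries $\tr_{\mathfrak{gl}}$, whose restriction to the non-compact structure group is not sign-definite and which moreover couples the corresponding curvature term back into $Ric^g$ itself; the integral argument then breaks down, and one must instead impose that the connection on $P_M$ be an instanton --- which is exactly Ivanov's criterion \cite{Ivan09}, recovered here as the case in which that problematic term is removed by hypothesis. One should also keep in mind that, as in Proposition~\ref{prop:specialholonomy}, compactness of $M$ (part of the standing assumptions of this section) is used in an essential way through the integration by parts.
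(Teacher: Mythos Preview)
Your first paragraph already proves the statement, but you have misread the conclusion and then gone astray. In this paper ``Ricci flat'' means $Ric^+ = 0$---see \eqref{eq:Ricciflat} and the sentence introducing it---not that the total Ricci tensor $Ric = Ric^+ - Ric^-$ vanishes. With the standing hypothesis of the section that $g$ is Riemannian (this is assumed, not a consequence of admissibility as you assert), the pairing on $V_+ = \{X + g(X)\}$ is $g$ itself, so $V_+$ is positive definite and Proposition~\ref{prop:Ricciflat} yields $Ric^+ = 0$ immediately. This is also the paper's argument: its proof is two lines, invoking Proposition~\ref{prop:Ricciflat} and then Lemma~\ref{lem:Riccitr} together with the splitting $\ad P = \ad P_K \oplus \ad P_{K'}$, the latter serving to unpack $Ric^+ = 0$ into the explicit heterotic equations of motion and thereby recover the ``if part'' of Ivanov's theorem as announced just before the statement.

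The integration argument you propose for $Ric^- = 0$ is therefore unnecessary, and it is also wrong. If it worked as you describe---forcing $H = 0$, $F = 0$ and collapsing to a $\nabla^g$-parallel spinor---it would contradict Theorem~\ref{th:Str}, since the Strominger system has nontrivial solutions with $H \neq 0$ and $F \neq 0$ that are precisely solutions of \eqref{eq:killing} on these Courant algebroids. Your sign bookkeeping fails because $c = \tr_{\mathfrak{k}} - \tr_{\mathfrak{k}'}$ has \emph{mixed} signature: in any integrated identity the $K$- and $K'$-curvature contributions enter with opposite signs and do not assemble into a definite expression; this is exactly the mechanism that allows $dH = c(F \wedge F)$ to hold nontrivially. (Contrast Proposition~\ref{prop:specialholonomy}, where there is no $F$ and the lone $|H|^2$ term is definite.) Note also that compactness of $M$ is not a standing assumption of this section; it is specific to Theorem~\ref{th:Str}.
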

\begin{proof}
By assumption, an element $s \in \ad P$ decomposes as $s = r + t \in \ad P_K \oplus \ad P_{K'}$. The result follows applying Proposition \ref{prop:Ricciflat} and using the explicit formula for the Ricci tensor in Lemma \ref{lem:Riccitr}.
\end{proof}

We address now the main result of this section, which follows as a direct consequence of Theorem \ref{th:Str}, Theorem \ref{th:duality} and our formula for the \emph{dilaton shift} in Proposition \ref{prop:dilatonshift}.

\begin{theorem}\label{th:Strduality}
The solutions of the Hull-Strominger system are preserved by T-duality, as in Definition \ref{def:duality}.
\end{theorem}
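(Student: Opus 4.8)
The plan is to combine the characterization of the Strominger system in terms of the Killing spinor equations (Theorem \ref{th:Str}) with the T-duality invariance of the latter (Theorem \ref{th:duality}, part $ii)$), and then to use the dilaton shift formula of Proposition \ref{prop:dilatonshift} to check that the extra constraint built into Theorem \ref{th:Str}---namely that the $1$-form $\varphi = div - div_{D^0}$ be exact---survives dualization. Concretely, suppose $(\omega,\Omega,\theta)$ is an invariant solution of the Strominger system on $(M,P)$ with string class $[\hat H]\in H^3_{str}(P,\RR)$, and let $(E_{[\hat H]},M,T)$ be an equivariant transitive Courant algebroid dual to $(\hat E,\hat M,\hat T)$ in the sense of Definition \ref{def:duality}; this is the heterotic T-duality of \cite{BarHek}, so $\hat E$ is again a transitive Courant algebroid obtained by reduction, attached to some principal bundle $\hat P\to\hat M$ and string class $[\hat H']\in H^3_{str}(\hat P,\RR)$. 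By Theorem \ref{th:Str}, the solution $(\omega,\Omega,\theta)$ corresponds to a $T$-invariant pair $(V_+,div)$ solving \eqref{eq:killing} on $E_{[\hat H]}$, with $V_+$ admissible and $\varphi:=div-div_{D^0}$ exact; in particular $div=div_{D^0}-\la\varphi,\cdot\ra$, so the section $e$ of Definition \ref{def:dualpairs} is just the $1$-form $\varphi$.

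First I would transport the data to the dual side: if $(\hat V_+,\hat{div})$ is the dual pair of Definition \ref{def:dualpairs}, then by Theorem \ref{th:duality}, part $ii)$, $(\hat V_+,\hat{div})$ is a solution of the Killing spinor equations \eqref{eq:killing} on $\hat E$. It then remains to verify the two hypotheses of the converse direction of Theorem \ref{th:Str}: that $\hat V_+$ is admissible, and that $\hat\varphi:=\hat{div}-div_{\hat{D}^{0}}$ (defined analogously on $\hat E$) is an exact $1$-form. Admissibility of $\hat V_+$ is a feature of heterotic T-duality: it follows from the explicit Buscher-type description of the dual generalized metric in \cite{BarHek,CaGu}, since dualizing the torus fibres keeps $r_+$ equal to $\dim M=\dim\hat M$ and preserves transversality of $V_-$ with $T^*$. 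Granting this, the dilaton shift formula of Proposition \ref{prop:dilatonshift}, in the form \eqref{eq:dilatonexp} (valid because $e=\varphi$ is a $1$-form), gives
\begin{equation*}
\hat \varphi = \varphi - 2d \log \Bigg{(}\frac{|\det h|}{| \det \hat h|}\Bigg{)},
\end{equation*}
which is exact since $\varphi$ is exact by hypothesis and the second term is manifestly an exact $1$-form on $B$ (hence pulls back to an exact $1$-form on $\hat M$). Therefore $(\hat V_+,\hat{div})$ satisfies the hypotheses of Theorem \ref{th:Str} on $\hat E=E_{[\hat H']}$, and so it arises from a solution of the Strominger system on $(\hat M,\hat P)$. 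Exchanging the roles of $E$ and $\hat E$ yields the converse, so the assignment is a bijection and the Strominger system is preserved by T-duality.

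The main obstacle I anticipate is not the transport of the spinorial equations---that is handed to us by Theorem \ref{th:duality}---but the bookkeeping needed to identify the dual Courant algebroid $\hat E$ with one of the standard transitive Courant algebroids obtained by reduction, so that Theorem \ref{th:Str} applies on the dual side, together with tracking admissibility of the dual generalized metric. One must check that the string class $[\hat H']$, the principal bundle $\hat P$, and the induced $SU(3)$-structure on $\hat M$ are the ones predicted by heterotic T-duality, and that the normalization of the dilaton $1$-form $\varphi$ used in Theorem \ref{th:Str} is compatible with the one in Proposition \ref{prop:dilatonshift}. Once these identifications are made, the exactness of $\hat\varphi$---the only genuinely new point beyond Theorem \ref{th:duality}---is immediate from the dilaton shift formula.
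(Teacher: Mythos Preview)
Your proposal is correct and follows essentially the same route as the paper: translate the Strominger solution into a Killing spinor pair via Theorem \ref{th:Str}, transport it by Theorem \ref{th:duality} $ii)$, and then use the dilaton shift formula \eqref{eq:dilatonexp} from Proposition \ref{prop:dilatonshift} to conclude that $\hat\varphi$ remains exact so that Theorem \ref{th:Str} applies on the dual side. Your discussion of admissibility and of identifying $\hat E$ as a transitive Courant algebroid obtained by reduction is more explicit than the paper's, which simply remarks that \eqref{eq:dilatonshift} is valid for admissible metrics in this setting; this extra care is reasonable but does not change the argument.
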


\begin{proof}
By Theorem \ref{th:Str} any solution of the Hull-Strominger system with string class $[\hat H]$ determines a solution of the Killing spinor equations $(V_+,div^{\varphi})$ on a Courant algebroid $E_{[\hat H]}$, and with $\varphi$ exact. If $\hat E \to \hat M$ is a dual Courant algebroid (for suitable symmetry groups), then by Theorem \ref{th:duality} it also carries a solution of the Killing spinor equations $(\hat V_+,div^{\hat \varphi})$. Furthermore, by Proposition \ref{prop:dilatonshift}, the $1$-forms $\hat \varphi$ and $\varphi$ are related by the \emph{dilaton shift} equation \eqref{eq:dilatonexp}. Therefore, since $\varphi$ is exact it follows that $\hat \varphi$ is also exact, and by Theorem \ref{th:Str} it determines a solution of the Hull-Strominger system on the dual space. Note that \eqref{eq:dilatonshift} is valid for admissible metrics in the situation considered here.
\end{proof}

An explicit notion of T-duality for transitive Courant algebroids obtained by reduction and with integral string class has been proposed in \cite{BarHek}, under the name of \emph{heterotic T-duality}. By \cite[Proposition 4.13]{BarHek} it follows that our Theorem \ref{th:Strduality} applies in this situation. This has been recently used in \cite{GF2018} to find the first examples of T-dual solutions of the Hull-Strominger system on compact non-K\"ahler manifolds with different topology.

Theorem \ref{th:Strduality} 
builds towards the definition of a Strominger-Yau-Zaslow version of mirror symmetry for the Hull-Strominger system, as proposed in \cite{Yau2005}. This new incarnation of mirror symmetry should have very different features as the standard one, and we illustrate this with the following result. Recall that, in the standard SYZ picture, a Hermite-Yang-Mills connection $A$ on a Calabi-Yau manifold, regarded as a supersymmetric B-cycle, is mapped to a special Lagrangian on the mirror \cite{LYZ}. In contrast, a special holonomy metric equipped with an instanton has a completely different behaviour under heterotic T-duality.

To state the result, note that Theorem \ref{th:Str} admits generalizations for any dimension of $M$, and the resulting Hull-Strominger equations, written in terms of the corresponding $G$-structures, are those considered in \cite{FIVU,GMW,Ivan09}. Thus, the same proof leads to analogues of Theorem \ref{th:Strduality} for these equations, in a spin manifold $M$ of arbitrary dimension.

\begin{corollary}\label{cor:newmirror}
A metric $g$ with a parallel spinor, defining a $G$-structure and equipped with a $G$-instanton $A$, is mapped via heterotic T-duality to a solution of the Hull-Strominger system.
\end{corollary}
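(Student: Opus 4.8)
The plan is to recognise the pair $(g,A)$ as an invariant solution of the Killing spinor equations \eqref{eq:killing} on a transitive Courant algebroid over $M$, to conclude via Theorem~\ref{th:Str} that it is a solution of the Strominger system, and then to transport it by heterotic T-duality using Theorem~\ref{th:Strduality} (equivalently, Theorem~\ref{th:duality} together with the dilaton shift of Proposition~\ref{prop:dilatonshift}).

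First I would build the relevant transitive Courant algebroid. Write $\eta$ for the parallel spinor of $g$, so that $G=\operatorname{Stab}(\eta)$ (one of $SU(n)$, $G_2$, $Spin(7)$), and let $A$ be the $G$-instanton on a principal $K$-bundle $P_K\to M$, i.e.\ $F_A\cdot\eta=0$. Taking $P=P_M\times_M P_K$, pairing $c=\tr_{\mathfrak{gl}}-\tr_{\mathfrak{k}}$ and connection $\theta=\nabla^g\oplus A$ as in the discussion after Theorem~\ref{th:Str}, the torsion-freeness of the $G$-structure forces the relevant $3$-form $H$ (the analogue of $d^c\omega$) to vanish, so that the Bianchi identity \eqref{eq:bianchitrans} becomes the vanishing of $c(F_\theta\wedge F_\theta)$ and fixes an admissible string class $[\hat H]\in H^3_{str}(P,\RR)$ through \eqref{eq:CS}. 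Let $E=E_{[\hat H]}$ be the corresponding transitive Courant algebroid, $V_+$ the admissible generalized metric induced by $g$, and $div=div_{D^0}$, so that $\varphi=div-div_{D^0}=0$ is exact. Using the explicit torsion-free connection \eqref{eq:Levi-Civitaexptr} and the canonical operators $D^+_-$, $\slashed D^+$ of Lemma~\ref{lem:mixedfixed} and Lemma~\ref{lem:dDpm}, I would check that for $(V_+,div)$ the equations \eqref{eq:killing} unwind — as in \cite{grt} — to $\nabla^g\eta=0$ (because $H=0$), $F_A\cdot\eta=0$, and $\slashed\nabla^g\eta=0$: the first is the hypothesis, the second is the $G$-instanton condition, and the third follows from the first. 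Thus $(V_+,div)$ solves \eqref{eq:killing}, and by Theorem~\ref{th:Str} — and its arbitrary-dimension analogue for the $G$-structure equations of \cite{FIVU,GMW,Ivan09} — it is a solution of the Strominger system on $(M,P)$.

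Next I would dualise. Heterotic T-duality is carried out with respect to a torus of symmetries, so assume $M$ carries a free torus action preserving all of the above data; then $(E,M,T)$ is an equivariant transitive Courant algebroid, $(V_+,div)$ is $T$-invariant, and by \cite{BarHek} (cf.\ \cite[Proposition~4.13]{BarHek}) there is a dual equivariant transitive Courant algebroid $(\hat E,\hat M,\hat T)$ with a duality isomorphism $\psi$ in the sense of Definition~\ref{def:duality}. The second part of Theorem~\ref{th:duality} then produces the dual pair $(\hat V_+,\hat{div})$ as a solution of \eqref{eq:killing} on $\hat E$, and Proposition~\ref{prop:dilatonshift} — in the admissible-metric version used at the end of the proof of Theorem~\ref{th:Strduality} — shows that $\hat\varphi-\psi(\varphi)$ is an exact $1$-form. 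Since $\varphi=0$, also $\hat\varphi$ is exact, so Theorem~\ref{th:Str} turns $(\hat V_+,\hat{div})$ into a solution of the Strominger system on $\hat M$.

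The hard part will be Step~1: checking that a torsion-free special holonomy metric together with an instanton genuinely determines a Killing spinor solution on a \emph{transitive} (rather than exact) Courant algebroid — in particular that the configuration is compatible with the Bianchi identity \eqref{eq:bianchitrans} for a well-defined string class, which for the configurations relevant to heterotic T-duality is automatic (cf.\ \cite{BarHek}), and that $H=0$ makes the dilaton $1$-form $\varphi$ exact, here identically zero. Everything downstream is formal: Theorem~\ref{th:duality} moves the Killing spinor data to the dual and Proposition~\ref{prop:dilatonshift} ensures that exactness of $\varphi$ — the condition singled out in Theorem~\ref{th:Str} — is preserved. I would close by contrasting with the exact case of Proposition~\ref{prop:specialholonomy}, where exactness of $\varphi$ forces $H=0$ and one recovers a genuine special holonomy metric: on the dual side the string class is in general nontrivial and $\hat H\neq0$, so the instanton data has been absorbed into the flux and into the $G$-structure of the dual, which is exactly the phenomenon advertised just before the statement.
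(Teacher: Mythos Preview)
Your overall strategy---recast $(g,A)$ as a solution of the Killing spinor equations on a transitive Courant algebroid and then invoke Theorem~\ref{th:duality} (together with Proposition~\ref{prop:dilatonshift} to control exactness of $\hat\varphi$)---matches the paper. However, your choice of transitive Courant algebroid in Step~1 does not work as written, and this is precisely the ``hard part'' you flag at the end.

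With $P=P_M\times_M P_K$, pairing $c=\tr_{\mathfrak{gl}}-\tr_{\mathfrak{k}}$ and connection $\theta=\nabla^g\oplus A$, the Bianchi identity~\eqref{eq:bianchitrans} with $H=0$ reads
\[
0=c(F_\theta\wedge F_\theta)=\tr(R_g\wedge R_g)-\tr(F_A\wedge F_A),
\]
and this is \emph{not} automatic for a special holonomy metric equipped with an arbitrary $G$-instanton: it is exactly the heterotic anomaly cancellation condition, which the statement of the corollary does not assume. So with your bundle there is in general no transitive Courant algebroid structure with $H=0$, and the argument stalls before you can even write down $E_{[\hat H]}$. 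The paper avoids this by a different---and rather clever---choice: it takes $P=P_K\times_M P_K$ with the \emph{same} connection on both factors, $\theta=A\times A$, and pairing $c=\tr_{\mathfrak{k}}-\tr_{\mathfrak{k}}$. Then $c(F_\theta\wedge F_\theta)=\tr(F_A\wedge F_A)-\tr(F_A\wedge F_A)=0$ identically, so the Bianchi identity holds with $H=0$ for free, and $(g,A)$ becomes a solution of~\eqref{eq:killing} with $\varphi=0$ on the resulting transitive Courant algebroid. After this, the dualisation step is the formal application of Theorem~\ref{th:duality}, exactly as you describe.
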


\begin{proof}
Consider $P = P_K \times_M P_K$, where $P_K$ is the principal bundle where the connection $A$ lives, and define $\theta = A \times A$. Then, $(g,A)$ defines a solution of the Killing spinor equations \eqref{eq:killing} (with $H = 0$ and $\varphi = 0$) in the transitive Courant algebroid determined by $[CS(\theta)] \in H^3_{str}(P,\RR)$. Here we use the invariant paring $c = \tr{\mathfrak{k}} - \tr{\mathfrak{k}}$ on $\mathfrak{k} \oplus \mathfrak{k}$, to define the Chern-Simons three-form $CS(\theta)$. The result now follows by application of Theorem \ref{th:duality}.
\end{proof}


\end{document}